\tikzset{cross/.style={cross out, draw=black, minimum size=2*(#1-\pgflinewidth), inner sep=0pt, outer sep=0pt},
cross/.default={2pt}}
\newtheorem{theorem}{Theorem}
\newtheorem{lemma}[theorem]{Lemma}
\newtheorem{corollary}[theorem]{Corollary}
\newtheorem{proposition}[theorem]{Proposition}
\theoremstyle{definition}
\newcommand{\ext}{\operatorname{ext}}
\newcommand{\Ext}{\operatorname{Ext}}
\newcommand{\inv}{^{-1}}
\newcommand{\cO}{\mathcal O}
\newcommand{\soc}{\operatorname{soc}}
\newcommand{\Ess}{\operatorname{Ess}}
\newcommand{\surj}{\rightarrow\mathrel{\mkern-14mu}\rightarrow}
\newcommand{\inj}{\hookrightarrow}
\newcommand{\kl}{\underline H}
\newcommand{\jc}{\mathcal J}
\begin{document}

\title[Bigrassmannian permutations and Verma modules]
{Bigrassmannian permutations and Verma modules}

\author[H.~Ko, V.~Mazorchuk and R.~Mr{\dj}en]
{Hankyung Ko, Volodymyr Mazorchuk and Rafael Mr{\dj}en}

\begin{abstract}
We show that bigrassmannian permutations determine the socle 
of the cokernel of an inclusion of Verma modules in type $A$. 
All such socular constituents turn out to be indexed by 
Weyl group elements from the penultimate two-sided cell.
Combinatorially, the socular constituents in the cokernel of
the inclusion of a Verma module indexed by $w\in S_n$
into the dominant Verma module are shown to be determined
by the essential set of $w$ and their degrees in the graded 
picture are shown to be computable in terms of the 
associated rank function. As an application, we compute the 
first extension from a simple module to a Verma module.
\end{abstract}

\maketitle


\section{Introduction, motivation and description of results}\label{s1}

\subsection{Setup}\label{s1.1}

Consider the symmetric group $S_n$ on $\{1,2,\dots,n\}$ as a Coxeter group
with simple reflections $s_{i}$, given by the elementary transpositions
$(i,i+1)$, where $i=1,2,\dots,n-1$. Denote by $\leq$ the corresponding
Bruhat order. 
Given $w\in S_n$, we call $i$ a {\em left descent} of $w$ if $s_iw<w$, and a {\em right descent} of $w$ if $ws_i<w$.
An element $w\in S_n$ is called {\em bigrassmannian} provided
that it has exactly one left descent, and exactly one right descent.
Various aspects of bigrassmannian permutations were studied in 
\cite{LS,Ko1,Ko2,EL,EH,Re,RWY}. The most relevant for this paper is the
property that bigrassmannian permutations are exactly the join-irreducible 
elements of $S_n$ with respect to $\leq$, see \cite{LS}. Join-irreducible elements of Weyl groups appear in representation-theoretic context in \cite{IRRT}.
Bigrassmannian elements are the first protagonists of the present paper.
We denote by $\mathbf{B}_n$ the set of all bigrassmannian permutations in $S_n$.

Denote by $\mathfrak{g}$ the simple Lie algebra $\mathfrak{sl}_n$ over $\mathbb{C}$
with the standard triangular decomposition
\begin{displaymath}
\mathfrak{g}=\mathfrak{n}_-\oplus\mathfrak{h}\oplus\mathfrak{n}_+,
\end{displaymath}
where $\mathfrak{h}$ is the Cartan subalgebra of all (traceless) diagonal matrices
and $\mathfrak{n}_+$ is the nilpotent subalgebra of all strictly upper triangular matrices.
Consider the BGG category $\mathcal{O}$ associated to this triangular
decomposition and its principal block $\mathcal{O}_0$, see \cite{BGG,Hu}.

The group $S_n$ is the Weyl group of $\mathfrak{g}$, and thus acts naturally on
$\mathfrak{h}^*$. We also consider the {\em dot-action} of $S_n$ on 
$\mathfrak{h}^*$, that is
\[w\cdot \lambda = w(\lambda+\rho)-\rho\] 
for $\lambda\in\mathfrak{h}^*$, where $\rho$ is the half of the sum of all positive roots
in $\mathfrak{h}^*$. For $\lambda\in \mathfrak{h}^*$, we denote by 
$\Delta(\lambda)$ the Verma module with highest weight $\lambda$ and by
$L(\lambda)$ the unique simple quotient of $\Delta(\lambda)$, see \cite{Ve,Di,Hu}.
The isomorphism classes of simple objects in $\mathcal{O}_0$ are naturally indexed by
the elements in $S_n$ as follows: $S_n\ni w\mapsto L(w\cdot 0)$, where $0$ denotes
the zero element of $\mathfrak{h}^*$. For $w\in S_n$, we set
$\Delta_w:=\Delta(w\cdot 0)$ and $L_w:=L(w\cdot 0)$.

It is well-known, see \cite[Chapter~7]{Di},
that, for $x,y\in S_n$, we have: 
\begin{itemize}
\item $\dim\mathrm{Hom}_{\mathfrak{g}}(\Delta_x,\Delta_y)\leq 1$,
\item a non-zero homomorphism from $\Delta_x$ to $\Delta_y$ exists
if and only if $x\geq y$,
\item each non-zero homomorphism from $\Delta_x$ to $\Delta_y$
is injective.
\end{itemize}
In particular, all $\Delta_w$, where $w\in S_n$, are uniquely determined
submodules  of the {\em dominant Verma module} $\Delta_e$.

The composition multiplicity of $L_x$ in $\Delta_x$ can be computed in terms
of {\em Kazhdan-Lusztig (KL) polynomials}, by \cite{KL,BB,BK,EW}, see Subsection \ref{s2.3}. The associated
{\em KL combinatorics} divides $S_n$ into subsets, called {\em two-sided cells},
ordered with respect to the {\em two-sided order} $\leq_{\mathtt{J}}$.
This coincides with the division of $S_n$ given by the Robinson-Schensted
correspondence: to each $w\in S_n$, we associate a pair $(P_w,Q_w)$ of
standard Young tableaux of the same shape $\lambda$ which is a partition of $n$,
see \cite{Ro,Sch,Sa}. The two-sided cells in $S_n$ are indexed by such $\lambda$ and
correspond precisely to the fibers of the map from $S_n$ to the set of all partitions
of $n$ induced by the Robinson-Schensted correspondence. Moreover, the 
two-sided order coincides with the dominance order on partitions, see \cite{Ge}.
The longest element $w_0$ of $S_n$ forms a two-sided cell which is the maximum with respect to the two-sided order. If we delete this two-sided cell,
in what is left there is again a unique maximum two-sided cell, which we denote by
$\mathcal{J}$ and call the {\em penultimate} two-sided cell. 
The two-sided cell $\mathcal J$ is the second protagonist of the present paper.
Under the Robinson-Schensted correspondence, it corresponds to the partition
$(2,1^{n-2})$ of $n$. Under the involution $x\mapsto xw_0$, the two-sided cell 
$\mathcal{J}$ corresponds to the two-sided cell which contains all simple reflections, studied in, for example, \cite{KMMZ,KM}.  The Kazhdan-Lusztig cell representation of $S_n$ associated
with (any left cell inside) 
$\mathcal{J}$ is exactly the representation of $S_n$ on $\mathfrak{h}^*$.
For each $i,j\in\{1,2,\dots,n-1\}$, the cell $\mathcal{J}$
contains a unique element $w$ such that $s_iw>w$ and $ws_j>w$. We denote
this $w$ by $w_{i,j}$.

\subsection{Motivation}\label{s1.2}

The original motivation for this paper comes from a question by Sascha Orlik and Matthias Strauch
which is discussed in more detail in 
the next subsection. A very special case of that question
leads to the problem of determining $\mathrm{Ext}_{\cO}^1(L_x,\Delta_y)$
in the case $x\neq w_0$. In the case $x=w_0$, we note that $L_{w_0}=\Delta_{w_0}$
and hence the computation of $\mathrm{Ext}_{\cO}^1(L_x,\Delta_y)$
can be reduced, using twisting functors, to \cite[Theorem~32]{Ma} (see the proof of Corollary \ref{cor2}).
The case $x\neq w_0$ requires new techniques and is completed
in Corollary~\ref{cor2}.

\subsection{Description of the main results}\label{s1.3}

The main result of this paper is the following theorem
which, in particular, reveals a completely unexpected connection
between $\mathbf{B}_n$ and $\mathcal{J}$. 

\begin{theorem}\label{thm1}
{\hspace{1mm}}

\begin{enumerate}[$($i$)$]
\item\label{thm1.1} For $w\in S_n$, the module $\Delta_e/\Delta_w$
has simple socle if and only if $w\in \mathbf{B}_n$.
\item\label{thm1.2} The map $\mathbf{B}_n\ni w\mapsto \mathrm{soc}(\Delta_e/\Delta_w)$
induces a bijection between $\mathbf{B}_n$ and simple subquotients of
$\Delta_e$ of the form $L_x$, where $x\in\mathcal{J}$.
\item\label{thm1.3} For $w\in S_n$, the simple subquotients of 
$\Delta_e/\Delta_w$ of the form $L_x$, where $x\in\mathcal{J}$, correspond under the
bijection from \eqref{thm1.2}, to $y\in \mathbf{B}_n$ such that $y\leq w$.
\item\label{thm1.4} For $w\in S_n$, the socle of $\Delta_e/\Delta_w$ 
consists of all $L_x$, where $x\in\mathcal{J}$, which correspond, under the
bijection from \eqref{thm1.2}, to the Bruhat maximal elements in
$\{y\in \mathbf{B}_n\,:\, y\leq w\}$. 
\end{enumerate}
\end{theorem}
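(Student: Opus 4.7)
\emph{Overall strategy.} I plan to establish the four parts together by (a) constructing an explicit bijection between $\mathbf{B}_n$ and the simple subquotients of $\Delta_e$ labelled by $\mathcal{J}$, (b) identifying the simple socle of $\Delta_e/\Delta_y$ for bigrassmannian $y$ as the image of this bijection, and (c) using the join-irreducibility characterization of $\mathbf{B}_n$ to pass from the bigrassmannian case to arbitrary $w$.

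\emph{Step 1: restriction to $\mathcal{J}$ and bijection (part (ii) and the forward direction of (i)).} For $w\neq e$ the inclusion $L_{w_0}=\Delta_{w_0}\subseteq\Delta_w$ together with $\Hom_\cO(L_x,\Delta_e)=\Hom_\cO(L_x,\Delta_w)=0$ for $x\neq w_0$ yields, via the long exact sequence for $0\to\Delta_w\to\Delta_e\to\Delta_e/\Delta_w\to 0$,
\[
\Hom_\cO(L_x,\Delta_e/\Delta_w)\;\cong\;\ker\!\bigl(\Ext^1_\cO(L_x,\Delta_w)\to\Ext^1_\cO(L_x,\Delta_e)\bigr).
\]
A KL-theoretic analysis---exploiting that $\mathcal{J}$ is the unique maximal cell strictly below $\{w_0\}$ in the two-sided order and that its cell module is the reflection representation of $S_n$---would then pin down that this kernel can be non-zero only for $x\in\mathcal{J}$, so every socle constituent of $\Delta_e/\Delta_w$ is of the form $L_x$ with $x\in\mathcal{J}$. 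The map $y\mapsto\soc(\Delta_e/\Delta_y)$ for $y\in\mathbf{B}_n$ then furnishes the bijection of (ii), with well-definedness (i.e.\ simplicity of $\soc(\Delta_e/\Delta_y)$, which is one direction of (i)) and bijectivity following from a cardinality check $|\mathbf{B}_n|=\sum_{x\in\mathcal{J}}[\Delta_e:L_x]$ obtained by matching the descent/essential-set parametrization of $\mathbf{B}_n$ against the KL multiplicities for the penultimate cell.

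\emph{Step 2: general $w$ (parts (iii), (iv), and converse of (i)).} For general $w$, by \cite{LS} the set $\mathbf{B}_n$ is exactly the set of join-irreducibles of $(S_n,\leq)$, so $w=\bigvee\{y\in\mathbf{B}_n:y\leq w\}$. For each such $y$, the inclusion $\Delta_w\subseteq\Delta_y$ induces a surjection $\Delta_e/\Delta_w\twoheadrightarrow\Delta_e/\Delta_y$, through which the simple socle $L_{x(y)}$ of $\Delta_e/\Delta_y$ supplies a composition factor of $\Delta_e/\Delta_w$; this proves (iii). For (iv), one further checks that $L_{x(y)}$ actually belongs to $\soc(\Delta_e/\Delta_w)$ iff $y$ is Bruhat-maximal in $\{y'\in\mathbf{B}_n:y'\leq w\}$: a strictly larger bigrassmannian $y''>y$ below $w$ would realise a non-split extension of $L_{x(y)}$ by further constituents inside $\Delta_e/\Delta_w$, pushing $L_{x(y)}$ out of the socle. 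The converse of (i) then follows immediately: if $w\notin\mathbf{B}_n$ then $w$ fails to be join-irreducible, so at least two distinct Bruhat-maximal bigrassmannians lie below $w$, and by (iv) the socle has at least two summands.

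\emph{Main obstacle.} The technical heart of the argument is the vanishing step in Step 1, namely showing that $\ker(\Ext^1_\cO(L_x,\Delta_w)\to\Ext^1_\cO(L_x,\Delta_e))=0$ for $x\notin\mathcal{J}\cup\{w_0\}$. This requires a careful comparison of KL-polynomial coefficients, or equivalently an explicit description of the second socle layer of $\Delta_e$ and of how $\Delta_w$ meets it. Once this restriction to $\mathcal{J}$ is secured, the remainder of the proof is largely combinatorial, controlled by the join-irreducibility of $\mathbf{B}_n$ and the matching between $\mathbf{B}_n$ and the penultimate-cell multiplicities of $\Delta_e$.
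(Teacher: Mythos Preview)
Your overall architecture matches the paper's, but two load-bearing steps are underspecified in a way that matters.

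\textbf{Restriction to $\mathcal{J}$.} You correctly flag the vanishing step as the main obstacle, but the proposed route via a ``KL-theoretic analysis'' of $\ker\bigl(\Ext^1_\cO(L_x,\Delta_w)\to\Ext^1_\cO(L_x,\Delta_e)\bigr)$ is not how the paper proceeds and is not obviously workable: you would need control over $\Ext^1_\cO(L_x,\Delta_w)$ for \emph{arbitrary} $w$, which KL combinatorics alone does not give. The paper instead argues directly on $\Hom$: for $x=w_0$ one uses rigidity of $\Delta_e$ to see that $\soc(\Delta_e/\Delta_{w_0})$ lives in degree $\ell(w_0)-1$, hence in $\mathcal{J}$; for general $x$ one applies the derived twisting functor $\mathscr{L}T_{w_0x^{-1}}$ to transport $\Hom(L_z,\Delta_e/\Delta_x)$ to $\Hom(\mathscr{L}T_{w_0x^{-1}}L_z,\Delta_{w_0x^{-1}}/\Delta_{w_0})$ and then invokes the fact that all simple subquotients of $T_{w_0x^{-1}}L_z$ lie $\leq_{\mathtt{J}} z$. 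This is a genuinely different mechanism from the one you sketch.

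\textbf{Simplicity for bigrassmannian $y$.} The cardinality match $|\mathbf{B}_n|=\sum_{x\in\mathcal{J}}[\Delta_e:L_x]$ does \emph{not} by itself force each $\soc(\Delta_e/\Delta_y)$ to be simple: a priori, for $y<y'$ in the same $\mathbf{B}_n^{(i,j)}$, the simple $\soc(\Delta_e/\Delta_y)$ could recur as a summand of $\soc(\Delta_e/\Delta_{y'})$, and then pigeonhole fails. The paper closes this gap with two extra ingredients. First, a descent constraint (their Proposition~6): if $L_u\subseteq\soc(\Delta_e/\Delta_x)$ with $s_iu>u$, then $s_ix<x$; hence for $y\in\mathbf{B}_n^{(i,j)}$ only $L_{w_{i,j}}$ can occur in the socle. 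Second, a separation argument (their Proposition~9): the chain $w_1<w_2<\cdots<w_k$ listing $\mathbf{B}_n^{(i,j)}$ admits interpolating elements $u_m\in\mathbf{B}_n\setminus\mathbf{B}_n^{(i,j)}$ with $w_m<u_m<w_{m+1}$. Since $\soc(\Delta_e/\Delta_{u_m})$ cannot contain any $L_{w_{i,j}}$ (wrong descents), the socle constituents of $\Delta_e/\Delta_{w_m}$ cannot persist into $\soc(\Delta_e/\Delta_{w_{m+1}})$. Only then does the count $k=[\Delta_e:L_{w_{i,j}}]$ force simplicity. Your proposal omits both ingredients; without them the bijection in (ii) and the forward direction of (i) do not follow, and since your proofs of (iii), (iv), and the converse of (i) rest on these, the whole chain is incomplete.
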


The bijection from Theorem~\ref{thm1}(\ref{thm1.2}) is explicitly given in Subsection \ref{subs_explicit}. One of the ways to think about this bijection is as follows: 
each graded simple module $\mathrm{soc}(\Delta_e/\Delta_w)$, where $w\in\mathbf{B}_n$,
has multiplicity one in $\Delta_e$ (see Proposition~\ref{prop7}), moreover, the 
images of these $\mathrm{soc}(\Delta_e/\Delta_w)$ in the graded Grothendieck group of 
$\mathcal{O}_0$ are linearly independent.

Theorem~\ref{thm1} provides a categorical, or, alternatively,
a representation theoretic interpretation of the poset $\mathbf{B}_n$.
We note that this interpretation is completely different from the one in \cite{IRRT}.
The crucial step towards the formulation of Theorem~\ref{thm1}
was an accidental numerical observation which can be found 
in Corollary~\ref{cor8}.

For $x\in S_n$, denote by $\ell(x)$ the {\em length} of $x$
(as an element of a Coxeter group), and by $\mathbf{c}(x)$,
the {\em content} of $x$, that is
the number of different simple reflections appearing in a reduced
expression of $x$ (this number does not depend on the choice of a 
reduced expression). Denote by $\Phi:\mathbf{B}_n\to\mathcal{J}$
the map given by $\Phi(w)=w_{i,j}$, if $w\in \mathbf{B}_n$ is such that
$s_iw<w$ and $ws_j<w$. 
For $x\in S_n$, denote by $\mathbf{BM}(x)$ the
set of all Bruhat maximal elements in the set 
$\mathbf{B}(x):=\{y\in \mathbf{B}_n\,:\,y\leq x \}$.
For $w\in S_n$, we denote by $\nabla_x$ the {\em dual Verma module}
obtained from $\Delta_x$ by applying the simple preserving duality on
$\mathcal{O}_0$, see \cite{Hu}.

Theorem~\ref{thm1}, combined with \cite[Theorem~32]{Ma},
has the following homological consequence:

\begin{corollary}\label{cor2}
Let $x,y\in S_n$. Then we have
\begin{displaymath}
\dim\mathrm{Ext}_{\cO}^{1}(L_x,\Delta_y)=\dim\mathrm{Ext}_{\cO}^{1}(\nabla_y,L_x)=
\begin{cases}
\mathbf{c}(xy), & x=w_0;\\
1, & x\in \Phi(\mathbf{BM}(y));\\
0, & \text{otherwise}. 
\end{cases}
\end{displaymath}
\end{corollary}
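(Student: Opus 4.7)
My plan is to use the simple-preserving duality on $\cO_0$ (which fixes simples and swaps $\Delta_y$ with $\nabla_y$) to identify $\Ext^1_\cO(L_x,\Delta_y)\cong \Ext^1_\cO(\nabla_y,L_x)$ at once, and to split the computation of $\Ext^1_\cO(L_x,\Delta_y)$ into two cases according as $x=w_0$ or $x\neq w_0$.

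The case $x=w_0$ is the one signalled in Subsection~\ref{s1.2}: since $L_{w_0}=\Delta_{w_0}$, the problem becomes $\Ext^1_\cO(\Delta_{w_0},\Delta_y)$. Here I will apply the (derived) twisting functor associated with $w_0$, which is an auto-equivalence of $D^b(\cO_0)$ permuting Verma modules up to shift, to reduce to an $\Ext^1$-computation between two Verma modules to which \cite[Theorem~32]{Ma} applies directly; tracking the twist, the value produced is $\mathbf{c}(w_0y)=\mathbf{c}(xy)$.

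For the case $x\neq w_0$, I apply $\Hom_\cO(L_x,-)$ to $0\to\Delta_y\to\Delta_e\to\Delta_e/\Delta_y\to 0$. Since $\soc(\Delta_y)=\soc(\Delta_e)=L_{w_0}\neq L_x$, both $\Hom(L_x,\Delta_y)$ and $\Hom(L_x,\Delta_e)$ vanish, so the long exact sequence collapses to
\[
0\to\Hom(L_x,\Delta_e/\Delta_y)\to\Ext^1_\cO(L_x,\Delta_y)\to\Ext^1_\cO(L_x,\Delta_e).
\]
The key new input needed is the vanishing $\Ext^1_\cO(L_x,\Delta_e)=0$ for $x\neq w_0$. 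For this, I will use the injective envelope $I$ of $\Delta_e$: since $\soc(\Delta_e)=L_{w_0}$, $I$ coincides with the injective envelope of $L_{w_0}$, namely the big projective-injective-tilting module in $\cO_0$. Its $\Delta$-flag has $\Delta_e$ as the bottom piece (as submodule), so the quotient $I/\Delta_e$ inherits a $\Delta$-flag. An elementary filtration argument then shows that in $\cO_0$ the socle of any $\Delta$-flag module consists only of copies of $L_{w_0}$: any simple submodule embeds into the smallest Verma piece of the flag containing it, and $\soc(\Delta_w)=L_{w_0}$ for every $w$. Hence $[\soc(I/\Delta_e):L_x]=0$, and applying $\Hom_\cO(L_x,-)$ to $0\to\Delta_e\to I\to I/\Delta_e\to 0$ (using injectivity of $I$) identifies $\Ext^1_\cO(L_x,\Delta_e)$ with this multiplicity, giving the vanishing.

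With the vanishing in hand, $\Ext^1_\cO(L_x,\Delta_y)\cong[\soc(\Delta_e/\Delta_y):L_x]$, and Theorem~\ref{thm1}(\ref{thm1.4}) immediately identifies the right-hand side as $1$ when $x\in\Phi(\mathbf{BM}(y))$ and $0$ otherwise. I expect the main obstacle to be carefully executing the twisting-functor reduction in the $x=w_0$ case, and in particular matching the output of \cite[Theorem~32]{Ma} against $\mathbf{c}(xy)=\mathbf{c}(w_0y)$ after the twist; the $x\neq w_0$ case becomes essentially mechanical once Theorem~\ref{thm1} is combined with the socle computation for $\Delta$-flag modules described above.
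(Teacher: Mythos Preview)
Your proposal is correct and follows essentially the same strategy as the paper: simple-preserving duality for the first equality, twisting functors together with \cite[Theorem~32]{Ma} for $x=w_0$, and a reduction of the case $x\neq w_0$ to the socle of $\Delta_e/\Delta_y$ via Theorem~\ref{thm1}(\ref{thm1.4}). The only variation is in how that last reduction is carried out: the paper shows directly that any non-split extension $0\to\Delta_y\to M\to L_x\to 0$ satisfies $M\hookrightarrow\Delta_e$ (using that $\Delta_e$ is copresented by copies of $I_{w_0}$, citing \cite{KSX}), whereas you run the long exact sequence for $0\to\Delta_y\to\Delta_e\to\Delta_e/\Delta_y\to 0$ and establish $\Ext^1_\cO(L_x,\Delta_e)=0$ from the $\Delta$-flag of $I_{w_0}/\Delta_e$. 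Your route is a little more self-contained, avoiding the external reference, but both arguments rest on the same structural fact about $I_{w_0}$ and are otherwise equivalent.
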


Theorem~\ref{thm1} and Corollary~\ref{cor2} are proved in Section~\ref{s2}.
Both Theorem~\ref{thm1} and Corollary~\ref{cor2} extend to singular blocks of $\cO$, which we prove in Theorem~\ref{thm2I} and Proposition~\ref{socleI} in Section \ref{s7.2}.

The starting point of this paper was a question the second author received from Sascha Orlik and Matthias
Strauch, namely, whether,
for $x,y\in S_n$ such that $x<y$, the space
$\mathrm{Ext}_{\cO}^1(\Delta_y/\Delta_{w_0},\Delta_x)$ 
can be non-zero?

From Corollary~\ref{cor2}, it follows that the answer is ``yes''.
For example, for the algebra $\mathfrak{sl}_4$, one can take $y=s_2w_0$, 
in which case $\Delta_y/\Delta_{w_0}\cong L_y$, and 
$x=s_2$. By Corollary~\ref{cor2}, we have 
$\mathrm{Ext}_{\cO}^1(L_y,\Delta_x)=\mathbb{C}$.

\subsection{Description of additional results}\label{s1.4}

In Section \ref{s3}, we relate the main results with a number of combinatorial tools. 
In Subsections~\ref{s3.1} we explain how the combinatorics and the
Hasse diagram of the poset $\mathbf{B}_n$ appear naturally in our
representation theoretic interpretation of this poset.
In Subsections~\ref{s3.3} we establish representation theoretic 
significance of the notions of the essential set of a permutation and
the associated rank function. In particular, in Corollary~\ref{cor_ess}
we show that the socular constituents in the cokernel of
the inclusion of a Verma module indexed by $w\in S_n$
into the dominant Verma module are determined
by the essential set of $w$. Further, in Proposition~\ref{grsoc}
we show how the associated rank function can be used to compute the
degrees of these simple socular constituents in the graded picture.

\section{Proofs of the main results}\label{s2}

\subsection{Category $\mathcal{O}$ tools}\label{s2.1}

Denote by $\underline{\mathcal{O}}_0$ the full subcategory of ${\mathcal{O}}_0$
which consists of all modules on which the center $Z(\mathfrak{g})$ of the universal 
enveloping algebra $U(\mathfrak{g})$ acts diagonalizably. Note that all Verma modules
and all simple modules in ${\mathcal{O}}_0$ are objects in $\underline{\mathcal{O}}_0$.
By \cite{So}, the category $\underline{\mathcal{O}}_0$ has an auto-equivalence,
denoted by $\Theta$, which
maps $L_x$ to $L_{x^{-1}}$, for $x\in S_n$. Consequently, 
$\Theta(\Delta_x)\cong \Delta_{x^{-1}}$, for $x\in S_n$. We note that 
$\Theta$ does {\bf not} extend to the whole of ${\mathcal{O}}_0$.

For $x\in S_n$, we have the corresponding {\em twisting functor}
$T_x$ on $\mathcal{O}_0$, see \cite{AS}, and the corresponding
{\em shuffling functor} $C_x$ on $\mathcal{O}_0$, see \cite{Ca,MS}.
Both $\mathcal{L}T_x$ and $\mathcal{L}C_x$ are self-equivalences
of the bounded derived category $\mathcal{D}^b(\mathcal{O})$.
In the proof below we usually use twisting functors and $\Theta$, 
however, one can, alternatively, use twisting and shuffling functors
or shuffling functors and $\Theta$.

The category $\mathcal{O}_0$ is equivalent to the category of 
finite dimensional modules over a finite dimensional basic algebra, which we denote by $A$. 
The algebra $A$ is unique, up to isomorphism. By \cite{So2}, it is Koszul
and hence admits a Koszul $\mathbb{Z}$-grading.
We denote by ${}^{\mathbb{Z}}\mathcal{O}_0$ the category of
finite dimensional $\mathbb{Z}$-graded $A$-modules. We denote by $\langle k\rangle$ the grading shift on
${}^{\mathbb{Z}}\mathcal{O}_0$ normalized such that
it maps degree $k$ to degree zero, and we fix the standard 
graded lifts of $L_w$ concentrated in degree zero, and of $\Delta_w$ such that its top is
concentrated in degree zero.

\subsection{Potential socle of the cokernel of an inclusion of Verma modules}\label{s2.2}

\begin{proposition}\label{prop3}
Let $x,y,z\in S_n$ be such that $x\geq y$ and $L_z$ is in the socle of
$\Delta_y/\Delta_x$. Then $z\in \mathcal{J}$. 
\end{proposition}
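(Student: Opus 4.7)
My plan is as follows. The statement reduces, via three easy steps, to a non-vanishing claim for $\Ext^1$; the real content is then a cell-theoretic bound which I sketch at the end.

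The first reduction is to the case $y = e$: since $e \leq y \leq x$ gives embeddings $\Delta_x \hookrightarrow \Delta_y \hookrightarrow \Delta_e$, the quotient $\Delta_y/\Delta_x$ injects into $\Delta_e/\Delta_x$, so any simple submodule of the former is a simple submodule of the latter. The second reduction rules out $z = w_0$: each Verma module has $L_{w_0}$ as its simple socle with multiplicity $1$, hence $[\Delta_e/\Delta_x : L_{w_0}] = 0$. The third step converts the socle hypothesis into an Ext condition: for $z \neq w_0$, applying $\Hom(L_z,-)$ to $0 \to \Delta_x \to \Delta_e \to \Delta_e/\Delta_x \to 0$ produces a long exact sequence whose first two $\Hom$ terms vanish, since all Verma socles are $L_{w_0} \not\cong L_z$; hence we obtain an injection $\Hom(L_z,\Delta_e/\Delta_x) \hookrightarrow \Ext^1_{\cO}(L_z,\Delta_x)$, which forces $\Ext^1_{\cO}(L_z,\Delta_x) \neq 0$.

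The key remaining task is to show that $\Ext^1_{\cO}(L_z,\Delta_x) \neq 0$ with $z \neq w_0$ implies $z \in \mathcal{J}$. I would use the derived twisting functor $\mathcal{L}T_x$, a self-equivalence of $D^b(\cO_0)$ satisfying $(\mathcal{L}T_x)\Delta_e \cong \Delta_x$ concentrated in degree zero. This rewrites the Ext as
\[
\Ext^1_{\cO}(L_z,\Delta_x) \;\cong\; \Hom_{D^b(\cO_0)}\bigl((\mathcal{L}T_x)^{-1} L_z,\, \Delta_e[1]\bigr).
\]
Projectivity of $\Delta_e$ then forces the complex $(\mathcal{L}T_x)^{-1} L_z$ to have cohomology in degree $-1$ with $L_e$ at its top. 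Since derived twisting preserves two-sided cells on simples and the cohomological range of $(\mathcal{L}T_x)^{-1} L_z$ is controlled by Lusztig's $\mathbf{a}$-function, this forces the two-sided cell of $z$ to be maximal among those strictly below $\{w_0\}$, i.e.\ $z \in \mathcal{J}$.

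The main obstacle is making this final cell-theoretic step rigorous. One route is to track $(\mathcal{L}T_x)^{-1}$ on graded simples via the Kazhdan--Lusztig $\mu$-function and Lusztig's $\mathbf{a}$-function directly; an alternative, which may be cleaner, is to invoke Koszul self-duality of the basic algebra $A$ of $\cO_0$, which translates $\Ext^1_{\cO}(L_z,\Delta_x)$ into a graded $\Hom$-space between a simple module and a tilting module on the dual side, where the condition $z \in \mathcal{J}$ appears as a bounded-degree condition on a coefficient of the Kazhdan--Lusztig basis.
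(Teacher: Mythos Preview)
Your reductions in steps~1--4 are correct, but step~5 contains a genuine error. Projectivity of $\Delta_e$ controls $\Ext^i(\Delta_e,-)$, not $\Ext^i(-,\Delta_e)$; it tells you nothing about maps \emph{into} $\Delta_e[1]$. Moreover, the conclusion you draw is actually false: the inverse of $\mathcal{L}T_x$ is the right derived functor of a left exact functor (Joseph's completion $G_x$, see \cite{AS}), hence is left $t$-exact, so $(\mathcal{L}T_x)^{-1}L_z$ lies in $D^{\geq 0}$ and has $H^{-1}=0$. What non-vanishing of $\Hom_{D^b}\bigl((\mathcal{L}T_x)^{-1}L_z,\Delta_e[1]\bigr)$ really gives you, via the hypercohomology spectral sequence, is that some $\Ext^{j}\bigl(H^{j-1}((\mathcal{L}T_x)^{-1}L_z),\Delta_e\bigr)$ with $j\geq 1$ is nonzero, and this is no easier to analyse than the original question. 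Your step~6 then remains a sketch with no mechanism to complete it.

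The paper's approach fixes this by twisting in the opposite direction and keeping the quotient rather than passing to an $\Ext$. Apply $\mathcal{L}T_{w_0x^{-1}}$ to $\Delta_e/\Delta_x$; since twisting is acyclic on Vermas this yields the honest module $\Delta_{w_0x^{-1}}/\Delta_{w_0}$, and
\[
\Hom(L_z,\Delta_e/\Delta_x)\;\cong\;\Hom_{D^b}\bigl(\mathcal{L}T_{w_0x^{-1}}L_z,\;\Delta_{w_0x^{-1}}/\Delta_{w_0}\bigr)
\;=\;\Hom\bigl(T_{w_0x^{-1}}L_z,\;\Delta_{w_0x^{-1}}/\Delta_{w_0}\bigr),
\]
the last equality because the source is in $D^{\leq 0}$ and the target is a module. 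Now the socle of $\Delta_{w_0x^{-1}}/\Delta_{w_0}$ is computable concretely: rigidity of $\Delta_e$ (socle, radical and graded filtrations coincide, \cite{Ir,BGS}) forces it to consist exactly of the $L_w$ with $\ell(w)=\ell(w_0)-1$, all of which lie in $\mathcal{J}$. Hence $T_{w_0x^{-1}}L_z$ must have a simple subquotient $L_w$ with $w\in\mathcal{J}$. The cell bound you were reaching for in step~6 is then the citable fact \cite[Theorem~6.3]{AS} that every simple subquotient of $T_uL_z$ satisfies $\leq_{\mathtt{J}}z$; combined with $z\neq w_0$ this forces $z\in\mathcal{J}$.
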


\begin{proof}
As $\Delta_y/\Delta_x$ is a submodule of $\Delta_e/\Delta_x$, it is enough to prove
the claim in the case $y=e$.

Consider first the case $x=w_0$. Then $\Delta_{w_0}$ is the socle of $\Delta_e$ and
we claim that the socle of $\Delta_e/\Delta_{w_0}$ consists of all $L_{x}$, where $x\in W$ is such that
$\ell(x)=\ell(w_0)-1$. The best
way to argue that the socle of $\Delta_e/\Delta_{w_0}$ is as described above is
to recall that $\Delta_e$ is rigid, see \cite{Ir,BGS}. This means that the
socle filtration and the radical filtration of $\Delta_e$ coincide. 
Moreover,
these two filtrations also coincide with the graded filtration, see 
\cite[Proposition~2.4.1]{BGS}.
The submodule $\Delta_{w_0}=L_{w_0}$ in $\Delta_e$ lives in degree $\ell(w_0)$. By rigidity,
the socle of $\Delta_e/\Delta_{w_0}$ consists of simple modules which live in degree
$\ell(w_0)-1$. Since, by KL combinatorics, a simple module $L_x$ appears in
$\Delta_e$ only in degrees $\leq \ell(x)$ and $L_{w_0}$ has multiplicity one, the only simples in degree
$\ell(w_0)-1$ are those $L_x$ for which $\ell(x)=\ell(w_0)-1$. Note also that 
all such $x$ belong to $\mathcal{J}$. This proves the claim of the
proposition in the case $x=w_0$.

For $x\neq w_0$, using the self-equivalence $\mathscr{L}T_{w_0x^{-1}}$, we have
\begin{equation}\label{eq11}
\mathrm{Hom}_{\mathcal{D}^b(\mathcal{O})}(L_z,\Delta_e/\Delta_x)\cong
\mathrm{Hom}_{\mathcal{D}^b(\mathcal{O})}(\mathscr{L}T_{w_0x^{-1}}(L_z),
\Delta_{w_0x^{-1}}/\Delta_{w_0}). 
\end{equation}
By the previous two paragraphs, the socle of $\Delta_{w_0x^{-1}}/\Delta_{w_0}$
consist of the simples $L_w$, where $w\in\mathcal{J}$. Therefore,
for the right hand side of \eqref{eq11} to be non-zero, the module
$T_{w_0x^{-1}}(L_z)$ must contain some simple subquotient of the form
$L_w$, where $w\in\mathcal{J}$. From \cite[Theorem~6.3]{AS}, it follows that all 
simple subquotients of $T_{w_0x^{-1}}(L_z)$ are of the form
$L_u$, where $u\leq_{\mathtt{J}}z$. This yields $z\in \mathcal{J}$,
completing the proof.
\end{proof}

\begin{proposition}\label{prop4}
Let $x\in S_n$ and $s$ be a simple reflection.
\begin{enumerate}[$($i$)$]
\item \label{prop4.1} If $sx<x$, then the socle of $\Delta_e/\Delta_x$ contains
some $L_y$ such that $sy>y$.
\item \label{prop4.2} If $xs<x$, then the socle of $\Delta_e/\Delta_x$ contains
some $L_y$ such that $ys>y$.
\end{enumerate}
\end{proposition}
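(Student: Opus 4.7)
The plan is to prove part (i) directly by exhibiting a convenient submodule of $\Delta_e/\Delta_x$, and then deduce part (ii) from part (i) via the auto-equivalence $\Theta$.

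For part (i): Suppose $sx<x$. Then $\Delta_x\subsetneq\Delta_{sx}$ and dividing by $\Delta_x$ yields an inclusion $\Delta_{sx}/\Delta_x\hookrightarrow\Delta_e/\Delta_x$. The key observation is that $\Delta_{sx}/\Delta_x$ is a parabolic Verma module. Let $\mathfrak{p}_s=\mathfrak{b}\oplus\mathfrak{g}_{-\alpha_s}$ be the minimal parabolic containing $\mathfrak{b}$ and the $(-\alpha_s)$-root space. Since $s(sx)=x>sx$, the weight $sx\cdot 0$ is $\alpha_s$-dominant integral, so the finite-dimensional simple $\mathfrak{p}_s$-module $L^{\mathfrak{p}_s}(sx\cdot 0)$ exists. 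The standard $\mathfrak{sl}_2$-computation at $\alpha_s$ gives the short exact sequence
\[ 0 \to \Delta^{\mathfrak{p}_s}(x\cdot 0) \to \Delta^{\mathfrak{p}_s}(sx\cdot 0) \to L^{\mathfrak{p}_s}(sx\cdot 0) \to 0 \]
of $\mathfrak{p}_s$-modules, and applying the exact induction functor $U(\mathfrak{g})\otimes_{U(\mathfrak{p}_s)}(-)$, which sends $\Delta^{\mathfrak{p}_s}(y\cdot 0)$ to $\Delta_y$, produces the identification
\[ \Delta_{sx}/\Delta_x \;\cong\; M^{\mathfrak{p}_s}(sx\cdot 0) := U(\mathfrak{g})\otimes_{U(\mathfrak{p}_s)}L^{\mathfrak{p}_s}(sx\cdot 0). \]

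Next I would invoke the standard characterization of the parabolic subcategory $\mathcal{O}^{\mathfrak{p}_s}$: a simple module $L_z$ lies in $\mathcal{O}^{\mathfrak{p}_s}$ if and only if $sz>z$, i.e.\ the highest weight $z\cdot 0$ is $\alpha_s$-dominant integral. Since $M^{\mathfrak{p}_s}(sx\cdot 0)$ is a nonzero finite length object of $\mathcal{O}^{\mathfrak{p}_s}$, its socle is nonempty, and every simple $L_z$ occurring in it satisfies $sz>z$. Combining this with the inclusion $\Delta_{sx}/\Delta_x\hookrightarrow\Delta_e/\Delta_x$ and the fact that the socle of a submodule is contained in the socle of the ambient module, we obtain a simple $L_y$ in $\mathrm{soc}(\Delta_e/\Delta_x)$ with $sy>y$, as required.

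For part (ii): the module $\Delta_e/\Delta_x$ belongs to $\underline{\mathcal{O}}_0$, and $\Theta$ sends it to $\Delta_e/\Delta_{x^{-1}}$ and $L_y$ to $L_{y^{-1}}$. Since $xs<x$ is equivalent to $sx^{-1}<x^{-1}$, and $ys>y$ is equivalent to $sy^{-1}>y^{-1}$, applying part (i) to $x^{-1}$ and then pulling back through $\Theta$ yields the desired $L_y$ in the socle of $\Delta_e/\Delta_x$ with $ys>y$.

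The main obstacle I expect is purely bookkeeping: one must align the choice of $\mathfrak{p}_s$ (versus its opposite) and the convention for $\alpha_s$-dominance so that the characterization of simples in $\mathcal{O}^{\mathfrak{p}_s}$ reads $sz>z$, matching the conclusion of the proposition rather than the opposite inequality. Once this is settled, the proof is an assembly of classical facts about parabolic Verma modules.
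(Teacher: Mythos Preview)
Your proof is correct and follows essentially the same route as the paper: both argue that $\Delta_{sx}/\Delta_x$ is a nonzero submodule of $\Delta_e/\Delta_x$ whose simple constituents all satisfy $sy>y$, and both deduce (ii) from (i) via $\Theta$. The only difference is phrasing: the paper simply observes that $\Delta_{sx}/\Delta_x$ is $s$-finite ``by construction'' (so every $L_y$ in its socle is $s$-finite, forcing $sy>y$), whereas you spell this out by identifying the quotient explicitly as the parabolic Verma module $M^{\mathfrak{p}_s}(sx\cdot 0)\in\mathcal{O}^{\mathfrak{p}_s}$ --- which is a more detailed justification of the same fact, but not a genuinely different argument.
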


\begin{proof}
Due to the assumption $sx<x$, we have  $\Delta_x\subset \Delta_{sx}$,
in particular, the socle of $\Delta_e/\Delta_x$ contains the socle of 
$\Delta_{sx}/\Delta_x$. The module $\Delta_{sx}/\Delta_x$ is non-zero
and, by construction,
$s$-finite (i.e. the action on this module of the $\mathfrak{sl}_2$-subalgebra 
of $\mathfrak{g}$ corresponding to $s$ is locally finite). In particular,
any $L_y$ in the socle of $\Delta_{sx}/\Delta_x$ is also $s$-finite.
Therefore $sy>y$ for each $y$ such that $L_y$ is in the socle of
$\Delta_{sx}/\Delta_x$. Claim~\eqref{prop4.1} follows.
Claim~\eqref{prop4.2} follows from claim~\eqref{prop4.1} using $\Theta$.
\end{proof}

\begin{corollary}\label{cor5}
Let $w\in S_n$  be such that  the socle of $\Delta_e/\Delta_w$ is simple.
Then $w$ is a bigrassmannian permutation.
\end{corollary}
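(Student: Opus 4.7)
The plan is to use Propositions~\ref{prop3} and~\ref{prop4} to read off the shape of $w$ from its simple socle, and then to exploit the internal structure of $\mathcal{J}$ to conclude that $w$ has only one descent on each side. A simple socle is non-zero, so $w \neq e$ and in particular $w$ has at least one left descent and at least one right descent. Write the simple socle as $L_y$; Proposition~\ref{prop3} immediately gives $y \in \mathcal{J}$.

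Next, for every left descent $s$ of $w$, Proposition~\ref{prop4}\eqref{prop4.1} supplies some $L_{y'}$ in the socle with $sy' > y'$; the simplicity of the socle forces $y' = y$, so $s$ is a left ascent of $y$. The symmetric argument via Proposition~\ref{prop4}\eqref{prop4.2} shows every right descent of $w$ is a right ascent of $y$. Hence it suffices to show that each element of $\mathcal{J}$ has exactly one left ascent and exactly one right ascent, because then $w$ will have exactly one left descent and exactly one right descent, placing $w$ in $\mathbf{B}_n$.

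To establish the ascent count for $\mathcal{J}$, I would exploit the paper's parametrization $\{w_{i,j} : 1 \leq i, j \leq n-1\} \subseteq \mathcal{J}$ together with $|\mathcal{J}| = (n-1)^2$, which follows because $\mathcal{J}$ corresponds under Robinson--Schensted to the partition $(2, 1^{n-2})$, for which the number of standard Young tableaux equals $n-1$. Counting triples $(w, i, j)$ with $w \in \mathcal{J}$, $s_i w > w$, and $w s_j > w$ in two ways yields
\[
(n-1)^2 \;=\; \sum_{w \in \mathcal{J}} \bigl(\#\text{left ascents of }w\bigr)\bigl(\#\text{right ascents of }w\bigr).
\]
Since no element of $\mathcal{J}$ equals $w_0$, each factor on the right is $\geq 1$; with $(n-1)^2$ summands, each product must equal $1$, and every element of $\mathcal{J}$ has a unique left and a unique right ascent.

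The main obstacle I foresee is precisely this cardinality-and-ascent-count step for $\mathcal{J}$; once it is in place, the rest of the argument is an immediate diagonal chase through Propositions~\ref{prop3} and~\ref{prop4}.
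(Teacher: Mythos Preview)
Your proof is correct and follows essentially the same route as the paper's: both reduce to the fact that each element of $\mathcal{J}$ has a unique left ascent and a unique right ascent, and then use Proposition~\ref{prop4} to force every descent of $w$ to coincide with that unique ascent of the socle parameter $y\in\mathcal{J}$. The only difference is cosmetic: the paper simply asserts the ascent-uniqueness property of $\mathcal{J}$ as a known fact, while you derive it from the parametrization $\{w_{i,j}\}$ together with $|\mathcal{J}|=(n-1)^2$ via your counting argument.
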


\begin{proof}
Assume that $s$ and $t$ are different simple reflections such that
$sw<w$ and $tw<w$. By Proposition~\ref{prop4}, the socle of
$\Delta_e/\Delta_w$ contains some $L_y$ such that $sy>y$ and
some $L_z$ such that $tz>z$. Both $y,z\in\mathcal{J}$. Note that, for each 
element $u$ of $\mathcal{J}$, there is at most one simple reflection $r$
such that $ru>u$. This means that $y\neq z$ and hence the socle of
$\Delta_e/\Delta_w$ is not simple. A similar argument works in the case
when there are different simple reflections $s$ and $t$ such that
$ws<w$ and $wt<w$. The claim follows.
\end{proof}

\begin{proposition}\label{prop6}
Assume that $x\in S_n$ and $y\in \mathcal{J}$ be such that 
$L_y$ is in the socle of $\Delta_e/\Delta_x$. Assume that 
$s$ is a simple reflection such that $sy>y$. Then $sx<x$.
\end{proposition}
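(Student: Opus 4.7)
I would argue by contradiction: suppose $sx>x$. Then $\Delta_{sx}\subsetneq\Delta_x$, which gives the short exact sequence
\[
0\to\Delta_x/\Delta_{sx}\to\Delta_e/\Delta_{sx}\to\Delta_e/\Delta_x\to 0.
\]
By the same reasoning as in the proof of Proposition~\ref{prop4}, the module $\Delta_x/\Delta_{sx}$ is $s$-finite: since $sx>x$, the generator of $\Delta_x$ lies in a nonnegative $\mathfrak{sl}_2^s$-weight, and the extra relation in $\Delta_x/\Delta_{sx}$ making $\Delta_{sx}$ vanish is precisely the $\mathfrak{sl}_2^s$-integrability relation at the top.

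The core of the argument is to apply the derived twisting functor $\mathscr{L}T_s$, a self-equivalence of $\mathcal{D}^b(\mathcal{O})$, to the embedding $L_y\hookrightarrow\Delta_e/\Delta_x$. This produces a nonzero morphism
\[
\mathscr{L}T_s L_y\longrightarrow\mathscr{L}T_s(\Delta_e/\Delta_x)
\]
in $\mathcal{D}^b(\mathcal{O})$. The hypothesis $sy>y$ forces $T_sL_y=0$, and one knows that the higher left-derived functors $\mathscr{L}_iT_s$ vanish for $i\geq 2$, so $\mathscr{L}T_sL_y$ is quasi-isomorphic to $\mathscr{L}_1T_sL_y[1]$, i.e.\ concentrated in cohomological degree $-1$. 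The additional hypothesis $y\in\mathcal{J}$ controls the cell ideal containing the constituents of $\mathscr{L}_1T_sL_y$ via \cite[Theorem~6.3]{AS}, as invoked already in Proposition~\ref{prop3}.

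For the target, Verma modules have vanishing higher derived twists, so $\mathscr{L}T_s$ applied to the defining short exact sequence of $\Delta_e/\Delta_x$ identifies $\mathscr{L}T_s(\Delta_e/\Delta_x)$ with the mapping cone of the induced map $T_s\Delta_x\to T_s\Delta_e$. Under $sx>x$, a reduced expression for $sx$ begins with $s$, so $sx\geq s$ in the Bruhat order; the explicit description of $T_s$ on Verma modules then forces this induced map to be injective, and the cone is concentrated in cohomological degree $0$. But a morphism in $\mathcal{D}^b(\mathcal{O})$ from a complex concentrated in cohomological degree $-1$ to one concentrated in degree $0$ must vanish. This contradicts the nonzero morphism above, completing the argument.

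The main obstacle will be carrying out the cone computation precisely, in particular verifying that the induced map $T_s\Delta_x\to T_s\Delta_e$ is injective and that no spurious cohomology appears in degree $-1$; this uses the structure theorem for $T_s$ on Vermas from \cite{AS}. An alternative, which the paper explicitly notes is interchangeable, is to combine the auto-equivalence $\Theta$ (swapping left and right descents) with a right-twisting functor and run the parallel argument.
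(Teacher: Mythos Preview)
Your argument is correct and follows essentially the same route as the paper: assume $sx>x$, apply $\mathscr{L}T_s$, and observe that $\mathscr{L}T_sL_y$ sits in cohomological degree $-1$ (since $sy>y$) while $\mathscr{L}T_s(\Delta_e/\Delta_x)\cong\Delta_s/\Delta_{sx}$ sits in degree $0$. Your first paragraph on $s$-finiteness of $\Delta_x/\Delta_{sx}$ and the remark about $y\in\mathcal{J}$ via \cite[Theorem~6.3]{AS} are not needed for this argument and can be dropped.
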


\begin{proof}
Assume $sx>x$. Applying $\mathscr{L}T_s$ and using \cite[Theorem~6.1]{AS}, 
similarly to \eqref{eq11}, we have 
\begin{displaymath}
\mathrm{Hom}_{\mathcal{D}^b(\mathcal{O})}(L_y,\Delta_e/\Delta_x)\cong
\mathrm{Hom}_{\mathcal{D}^b(\mathcal{O})}(L_z[1],
\Delta_{s}/\Delta_{sx}). 
\end{displaymath} 
The right hand side of this equality is $0$ since $\Delta_{s}/\Delta_{sx}$
is a module in homological position $0$ while $L_z[1]$ is a module
in homological position $-1$. The claim follows.
\end{proof}

\subsection{Combinatorial tools}\label{s2.3}

Consider the Hecke algebra $\mathbb{H}_n$ associated to the Coxeter system $(S_n,S)$, 
where $S$ is the set $\{s_1,\cdots, s_{n-1}\}$ of simple reflections. 
$\mathbb{H}_n$ is a $\mathbb Z[v,v\inv]$-algebra 
generated by $H_i$, for $1\leq i\leq n-1$, which satisfy the braid relations 
\[H_iH_{i+1}H_i=H_{i+1}H_iH_{i+1},\]
for all $1\leq i\leq n-2$,
and the quadratic relation
\[(H_i+v)(H_i-v\inv)=0,\]
for all $1\leq i\leq n-1$.
Given a reduced expression $w=s_i s_j\cdots s_k$ of $w\in S_n$, we let $H_w=H_iH_j\cdots H_k$. 
The element $H_w$ is independent of the choice of the reduced expression, 
and $\{H_w\,:\,w\in S_n\}$ is a ($\mathbb Z[v,v\inv]$-)basis of $\mathbb H_n$ called the {\em standard basis}.
Consider the ($\mathbb Z$-algebra-)involution on $H_n$ denoted by  a bar, 
such that $\overline{v}= v\inv$ and $\overline{H_i} = H_i\inv$.
Then there is a unique element $\kl_w$ in $\mathbb H_n$ such that $\overline{\kl_w} = \kl_w$ and 
\[\kl_w = H_w + \sum_y p_{y,w}H_y,\]
for some $p_{y,w}\in v\mathbb Z[v]$.
The elements $\kl_w$, where $w\in W$, form a basis of $\mathbb H_n$ called the {\em Kazhdan-Lusztig (KL) basis}.
Given $w,y\in S_n$, the coefficient of $v$ in $p_{w,y}+p_{y,w}$ is denoted by 
$\mu(w,y)=\mu(y,w)$, defining the {\em (Kazhdan-Lusztig) $\mu$-function}.
If $s\in S$ then $\kl_s=H_s+v$
and we have
\begin{equation}\label{sy'}
\kl_s\kl_y = (v+v\inv)\kl_{y} ,   
\end{equation}
for $sy<y$, and 
\begin{equation}\label{sy}
    \kl_s\kl_y = \kl_{sy}+\sum_{sx<x, x<y}\mu(x,y)\kl_x,
\end{equation}
for $sy>y$.
Another basic fact is that
\begin{equation}\label{longest}
    p_{x,w_0}=v^{\ell(w_0)-\ell(x)}.
\end{equation}
For more details about KL basis we refer to \cite{KL}.

A consequence of the Kazhdan-Lusztig conjecture (which is now a theorem, see \cite{KL,BB,BK,EW}), 
is that the polynomial $p_{x,y}$ above describes the composition multiplicities of 
the Verma modules in $\!^\mathbb Z\cO_0$ in the following sense: 
\[\sum_{i\in\mathbb{Z}} [\Delta_w:L_y\langle - i\rangle]v^i = p_{w,y}.\]

From the Kazhdan-Lusztig conjecture/theorem, combined with the Koszul self-duality of
$\cO_0$ from \cite{So2,BGS}, it follows that the integer $\mu(w,y)$ equals the dimension of the 
first extension space between $L_w$ and $L_y$ (and hence determines the Gabriel quiver
of the algebra $A$).

We denote by $\mathbf{a} \colon W \to \mathbb{Z}_{\geq 0}$ Lusztig's $\mathbf{a}$-function, see \cite{Lu2}. The value $\mathbf{a}(w)$ does not change when $w$ varies over a two-sided cell. We write $\mathbf{a}(\mathcal J)=\mathbf{a}(w)$ for $w\in \mathcal J$. By definition, the value $\mathbf{a}(w)$ for $w \in \mathcal{J}$  describes the minimal possible 
degree shift for simple subquotients $L_u$, with $u\in \mathcal{J}$, of $\Delta_e$
(i.e. the degree shift for the top layer of the tetrahedron on Figure \ref{fig_tetra}).
The minimal shift is achieved exactly when $w$ is a Duflo element. 
It follows that 
\begin{equation}\label{pdegrees}
    p_{e,w}\in \mathbb Z \{ v^a\ |\ \mathbf{a}(w)\leq a\leq \ell(w)\}
\end{equation} 
where the first equality holds if and only if $w$ is a Duflo element.
Since $\mathcal{J}$ contains $w_{1,1}$, which is the longest element of some
parabolic subgroup of $S_n$, we have $\mathbf{a}(\mathcal{J})=\ell(w_{1,1})=\frac{(n-1)(n-2)}{2}$.

\begin{proposition}\label{muJ}
Let $x,y\in \jc$. Then $\mu(x,y)\neq 0$ if and only if $x,y$ are adjacent in the Bruhat graph.
In the latter case, we have $\mu(x,y)=1$.
\end{proposition}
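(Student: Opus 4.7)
A standard consequence of Kazhdan--Lusztig cell theory is that, for $x, y$ lying in a single two-sided cell, $\mu(x, y) \neq 0$ forces $x$ and $y$ to share a left cell or a right cell. Using the Hecke-algebra anti-involution $w \mapsto w^{-1}$ --- which preserves $\mu$-values and the Bruhat graph, fixes the two-sided cell $\mathcal{J}$, and interchanges the left- and right-cell partitions (in fact sends $w_{i, j}$ to $w_{j, i}$) --- I may reduce to the case where $x, y$ lie in a common left cell $\Gamma \subset \mathcal{J}$.

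By the introduction, the cell module associated with $\Gamma$ is the reflection representation $\mathfrak{h}^*$ of $S_n$, so $|\Gamma| = n - 1$. Writing $\Gamma = \{w_{1, j_0}, w_{2, j_0}, \dots, w_{n-1, j_0}\}$ for a fixed $j_0$, I identify the cell-module basis $\{\kl_{w_{i, j_0}}\}$ with the simple-root basis $\{\alpha_i\}$ of $\mathfrak{h}^*$. The cell module carries a canonical $W$-graph structure whose edges and weights record the relevant $\mu$-values; computing the action of each $\kl_{s_k} = H_{s_k} + v$ on $\{\alpha_i\}$ via the recursions \eqref{sy'}--\eqref{sy}, I read off that the only nonzero $\mu$-values inside $\Gamma$ are $\mu(w_{i, j_0}, w_{i+1, j_0}) = 1$.

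On the Bruhat side, I would check directly by an explicit permutation computation that within $\Gamma$ the pairs $\{w_{i, j_0}, w_{i+1, j_0}\}$ are Bruhat-adjacent (they differ by a reflection), whereas pairs $\{w_{a, j_0}, w_{c, j_0}\}$ with $|a - c| \geq 2$ differ by a cycle of length $\geq 3$ and hence are not Bruhat-adjacent. This matches the list of nonzero $\mu$-values and completes the proof.

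\textbf{Main obstacle.} The crucial step is extracting the $W$-graph of $\Gamma$ (with precisely the weights $\mu = 1$) from the identification of the cell module with the reflection representation: although the $S_n$-action on $\mathfrak{h}^*$ is elementary, transferring this into $\mu$-values requires pinning down the correspondence $\kl_{w_{i, j_0}} \leftrightarrow \alpha_i$, including the $v$-grading conventions. The reduction via $w \mapsto w^{-1}$ and the final Bruhat-adjacency check are then straightforward combinatorial manipulations of permutations.
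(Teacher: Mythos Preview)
Your route differs substantially from the paper's. There the proposition is dispatched by the Kazhdan--Lusztig inversion formula $\mu(x,y)=\mu(w_0x^{-1},w_0y^{-1})$ (equivalently, Koszul duality), which transports the question to the small two-sided cell containing the simple reflections, where the analogous statement is already in the literature \cite{Lu,KMMZ,KM}.

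Your plan has two gaps. The opening reduction --- that $\mu(x,y)\neq 0$ within a single two-sided cell forces a shared left or right cell --- is not a standard fact in the generality you state; even in type~A it needs Lusztig's result that $\leq_L$ is trivial on each two-sided cell, together with a separate argument excluding the case $L(x)=L(y)$, $R(x)=R(y)$ (this case happens to be vacuous in $\mathcal{J}$ because descent sets determine elements there, but you do not say so). The more serious gap is the one you yourself flag as the main obstacle: knowing that the cell module is abstractly isomorphic to $\mathfrak{h}^*$ does not pin down the $W$-graph, since a $W$-graph carries strictly more data than the underlying representation. You would need an explicit matching $\kl_{w_{i,j_0}}\leftrightarrow \alpha_i$ together with a verification that the Hecke action agrees, or a uniqueness theorem for $W$-graphs with the given descent data $I_{w_{i,j_0}}=S\setminus\{s_i\}$; neither is supplied.

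Both detours are avoidable. For $x=w_{i,j}<y=w_{k,l}$ in $\mathcal{J}$ with $i\neq k$, we have $s_ix>x$ while $s_iy<y$; comparing $H_x$-coefficients in \eqref{sy'} gives $p_{x,y}=v\,p_{s_ix,y}$, so $\mu(x,y)$ is the constant term of $p_{s_ix,y}$, namely $\delta_{s_ix,y}$. The case $j\neq l$ is symmetric via right multiplication, and $(i,j)=(k,l)$ forces $x=y$. Hence $\mu(x,y)\neq 0$ implies $y\in\{s_ix,\,xs_j\}$, in particular $\ell(y)-\ell(x)=1$, so $x$ and $y$ are Bruhat-adjacent; conversely any covering pair has $p_{x,y}=v$ and $\mu=1$. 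This proves the proposition directly, with no appeal to cell modules.
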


\begin{proof}
By the Kazhdan-Lusztig inversion formula, see \cite{KL}, (or, equivalently, by Koszul duality,
see \cite{BGS}), we have $\mu(x,y)=\mu(w_0x\inv,w_0y\inv)$. Now, the claim follows from the 
similar fact about the small two-sided cell (see \cite[Proposition 3.8 (d)]{Lu} or \cite{KMMZ,KM}).
\end{proof}

For convenience of the reader, on Figure \ref{fig_bruhat_J} we give the Bruhat graph of $\mathcal{J}$.
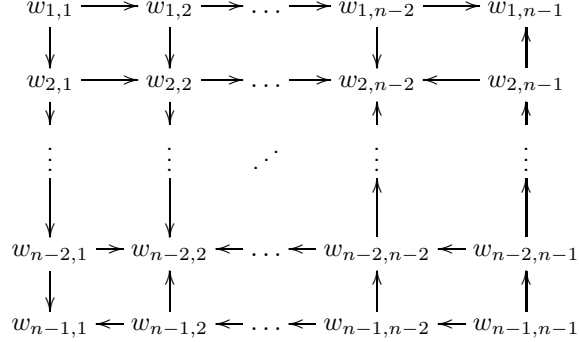
\begin{figure}[ht]
    \centering
\[ \xymatrix@C=1em@R=1.5em{
w_{1,1} \ar[r] \ar[d] & w_{1,2} \ar[r]\ar[d] & \ldots \ar[r] & w_{1,n-2} \ar[r]\ar[d] & w_{1,n-1} \\
w_{2,1} \ar[r]\ar[d] & w_{2,2} \ar[r]\ar[d] & \ldots \ar[r] & w_{2,n-2} & w_{2,n-1} \ar[l]\ar[u] \\  
\vdots \ar[d] & \vdots \ar[d] & \reflectbox{$\ddots$}  & \vdots \ar[u] & \vdots \ar[u] \\
w_{n-2,1} \ar[r]\ar[d] & w_{n-2,2}  & \ldots \ar[l] & w_{n-2,n-2} \ar[l]\ar[u] & w_{n-2,n-1} \ar[l]\ar[u]  \\
w_{n-1,1}  & w_{n-1,2} \ar[l]\ar[u]  & \ldots \ar[l] & w_{n-1,n-2} \ar[l]\ar[u] & w_{n-1,n-1} \ar[l] \ar[u]    }\]
    \caption{Bruhat graph of the penultimate cell (the rows are right, and the columns are left cells).}
    \label{fig_bruhat_J}
\end{figure}

\begin{lemma}\label{propp}
Suppose $y=w_{i,j}\in \jc$, with $i\neq j$. Then
\begin{equation}\label{pp}
p_{s,y} = v\inv p_{e,y},\quad \text{ for all } s \in S.
\end{equation} 
\end{lemma}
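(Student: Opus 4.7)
The plan is to use the descent structure of $y=w_{i,j}$ together with the Hecke algebra identity \eqref{sy'} (and its right-multiplication analog) to read off $p_{s,y}$ directly as the $H_e$-coefficient of $\kl_y$. The key preliminary observation is that $y=w_{i,j}$ has left descent set $S\setminus\{s_i\}$ and right descent set $S\setminus\{s_j\}$: under the Robinson--Schensted correspondence $\jc$ matches the shape $(2,1^{n-2})$, and every standard Young tableau of that shape has descent set of size $n-2$, so the defining non-descents $s_i$ (on the left) and $s_j$ (on the right) of $w_{i,j}$ are in fact the only ones.

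Since $i\neq j$, every simple reflection $s$ satisfies $sy<y$ or $ys<y$: if $s\neq s_i$ then $sy<y$, while if $s=s_i$ then $s\neq s_j$ and so $ys<y$. In the first situation we apply \eqref{sy'} directly; in the second we apply its right-multiplication analog $\kl_y\kl_s=(v+v\inv)\kl_y$ (for $ys<y$), which follows from \eqref{sy'} by the standard antiautomorphism $H_w\mapsto H_{w\inv}$ of $\mathbb H_n$, using that this antiautomorphism fixes the bar involution and sends $\kl_w$ to $\kl_{w\inv}$ because $p_{x,y}=p_{x\inv,y\inv}$.

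Now write $\kl_y=\sum_x p_{x,y}H_x$ and expand the appropriate side. Using only the quadratic relation $H_s^2=H_e+(v\inv-v)H_s$ and $H_sH_x=H_{sx}$ for $sx>x$, the coefficient of $H_e$ in $(H_s+v)\kl_y$ equals $p_{s,y}+v\,p_{e,y}$: the first term arises only from the $x=s$ summand of $H_s\kl_y$ via $H_s^2$, and the second from the $x=e$ summand of $v\kl_y$. Equating this to the $H_e$-coefficient $(v+v\inv)p_{e,y}$ of the right hand side of \eqref{sy'} yields $p_{s,y}=v\inv p_{e,y}$. The right-multiplication case is entirely parallel, with $H_xH_s$ in place of $H_sH_x$ and an identical contribution from $x=s$.

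The only nontrivial ingredient is the descent-set description of $w_{i,j}$, since its defining property specifies just one non-descent on each side; one needs either a short tableau computation (as sketched above) or a cell-theoretic input to rule out additional non-descents. Once that is in place the Hecke algebra computation is essentially a two-line coefficient extraction.
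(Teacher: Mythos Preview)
Your proof is correct and follows essentially the same approach as the paper's: for $s\neq s_i$ use $sy<y$ and compare $H_e$-coefficients in \eqref{sy'}, while for $s=s_i$ use $i\neq j$ so $ys<y$ and compare $H_e$-coefficients in the right-multiplication analog $\kl_y\kl_s=(v+v\inv)\kl_y$. The paper's version is terser, taking the descent-set description of $w_{i,j}$ for granted (it is implicit in the bijection between $\mathcal J$ and pairs $(i,j)$), whereas you spell out both that point and the coefficient extraction explicitly.
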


\begin{proof}
Suppose $s\neq s_i$. Then, comparing the $H_e$-coefficients in \eqref{sy'}, 
gives \eqref{pp}.
If $s=s_i$, then we have $s\neq s_j$, so comparing the $H_e$-coefficients 
in the equation $\kl_y\kl_s=(v+v\inv)\kl_y$ gives \eqref{pp}.
\end{proof}

\begin{lemma}\label{prop2p}
Suppose $y=w_{i,j}\in \jc$, with $i\neq j$, $s\in S$ and $sy>y$, we have
\begin{equation}\label{2p}
    (v+v\inv)p_{e,y}=p_{e,sy} + \sum_{\substack {u \in S \\ uy\sim_{\mathtt{L}} y \\ uy<y}} p_{e,uy}.
\end{equation}
\end{lemma}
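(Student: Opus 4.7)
The plan is to extract \eqref{2p} as the $H_e$-coefficient identity coming from \eqref{sy}. Since $sy>y$, \eqref{sy} reads $\kl_s\kl_y=\kl_{sy}+\sum_{sx<x,\,x<y}\mu(x,y)\kl_x$. Writing $\kl_s=H_s+v$, the $H_e$-coefficient of $\kl_s\kl_y$ is $p_{s,y}+vp_{e,y}$: the summand $v\cdot\kl_y$ contributes $vp_{e,y}$, and the quadratic relation $H_s^2=1+(v^{-1}-v)H_s$ shows that only the $H_s$-component of $\kl_y$ produces an $H_e$-term in $H_s\cdot\kl_y$. By Lemma~\ref{propp}, $p_{s,y}=v^{-1}p_{e,y}$, so the LHS simplifies to $(v+v^{-1})p_{e,y}$, while the RHS $H_e$-coefficient is $p_{e,sy}+\sum_{sx<x,\,x<y}\mu(x,y)p_{e,x}$.

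It then remains to match
\[
\sum_{sx<x,\,x<y}\mu(x,y)p_{e,x}\;=\;\sum_{\substack{u\in S\\ uy\sim_L y\\ uy<y}}p_{e,uy}
\]
via the bijection $u\mapsto uy$. A useful background input is that each $w_{k,\ell}\in\mathcal{J}$ has left descent set $S\setminus\{s_k\}$ and right descent set $S\setminus\{s_\ell\}$, so $s_k$ (resp.\ $s_\ell$) is the unique left (resp.\ right) ascent; in particular $sy>y$ with $y=w_{i,j}$ forces $s=s_i$. For the forward direction: if $u\in S$ with $uy<y$ and $uy\sim_L y$, then $uy=w_{i',j}$ and is Bruhat-adjacent to $y$ via the simple reflection $u$, so Proposition~\ref{muJ} yields $\mu(uy,y)=1$; since $uy\neq y$ gives $i'\neq i$, we have $s=s_i\neq s_{i'}$, so $s\in S\setminus\{s_{i'}\}$ is a left descent of $uy$, whence $s(uy)<uy$. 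For the converse, contributions from $x$ in cells strictly below $\mathcal{J}$ must be ruled out using \eqref{pdegrees}, $\mathbf{a}(\mathcal{J})=\binom{n-1}{2}$, and $sx<x$; then $x\in\mathcal{J}$ and Proposition~\ref{muJ} gives $\mu(x,y)=1$ together with Bruhat-adjacency. Since elements of a common right cell share their left descent sets, $sy>y$ and $sx<x$ exclude $x\sim_R y$, so the grid structure of Figure~\ref{fig_bruhat_J} pins $x$ in the same left cell as $y$, forcing $x=uy$ for a unique simple $u$.

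The delicate step is the vanishing of contributions from $x\notin\mathcal{J}$: the relation $\mu(x,y)\neq 0$ does not in general force $x\sim_{\mathtt{J}}y$, so a careful analysis of $p_{x,y}$ for $x$ in lower cells, using the degree bounds of \eqref{pdegrees} and the asymmetry $sy>y$ versus $sx<x$, will be required. Once past that, the remaining matching is a direct reading of the descent-set structure of $\mathcal{J}$ recorded in Figure~\ref{fig_bruhat_J}.
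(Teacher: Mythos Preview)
Your overall strategy---compare $H_e$-coefficients in \eqref{sy} and invoke Lemma~\ref{propp}---is exactly the paper's. The difference is in how you eliminate contributions from $x$ outside $\mathcal{J}$, and here your proposal has a genuine gap.

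You flag this as ``the delicate step'' and propose to use the degree bounds \eqref{pdegrees} together with $sx<x$. But \eqref{pdegrees} controls $p_{e,x}$, not $\mu(x,y)$, and there is no mechanism by which those degree bounds would force the sum $\sum_{sx<x,\,x<y}\mu(x,y)p_{e,x}$ to drop terms from lower cells. The step is in fact not delicate at all: the condition ``$\mu(x,y)\neq 0$, $sx<x$, $sy>y$'' is precisely one generating step in the definition of the left preorder, so any $x$ occurring in \eqref{sy} with nonzero coefficient automatically satisfies $x\geq_{\mathtt{L}} y$. Since $y\in\mathcal{J}$ is penultimate and $x<y$ rules out $x=w_0$, this forces $x\sim_{\mathtt{L}} y$ immediately. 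The paper does exactly this in one line, and then Proposition~\ref{muJ} finishes.

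Once you use the preorder argument, your detour through right-cell descent sets and the grid of Figure~\ref{fig_bruhat_J} to reach $x\sim_{\mathtt{L}} y$ becomes unnecessary: you get $x\sim_{\mathtt{L}} y$ directly, not merely $x\in\mathcal{J}$. So the fix is to replace your proposed degree-bound analysis with the one-line observation that the terms on the right of \eqref{sy} lie $\geq_{\mathtt{L}} y$ by definition of the left preorder.
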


\begin{proof}
Consider Equation~\eqref{sy}, for $y\in \jc$.
Since all $x$ appearing in this equation with non-zero coefficient
satisfy, at the same time, $x<y$ and  $x\geq_{\mathtt{L}} y$, 
and, since $y\in\jc$, we have $x\sim_{\mathtt{L}} y$. 
Now, comparing the $H_e$-coefficients on both sides in Equation~\eqref{sy}, we get
\begin{equation}\label{2p''}
    (v+v\inv)p_{e,y}=p_{s,y} + v p_{e,y}=p_{e,sy} + \sum_{\substack{x\sim_{\mathtt{L}} y \\ x<y}} \mu(x,y)p_{e,x},
\end{equation}
where we used \eqref{pp} for the first equality.
Then Proposition \ref{muJ} reduces \eqref{2p''} to \eqref{2p}.
\end{proof}
 
\begin{proposition}\label{p_e,w_I}
Let $y=w_{1,1}$ or $y=w_{n-1,n-1}$. Then $p_{e,y}=v^{\ell(y)}$.
\end{proposition}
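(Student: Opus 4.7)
The plan is to identify the elements $w_{1,1}$ and $w_{n-1,n-1}$ as longest elements of maximal parabolic subgroups of $S_n$, and then to deduce the claim by combining equation~\eqref{longest} with the parabolic invariance of Kazhdan--Lusztig polynomials.

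First I would identify $w_{1,1}$ concretely. Let $W_I$ denote the maximal parabolic subgroup $\langle s_2, s_3, \ldots, s_{n-1}\rangle$ of $S_n$. Its longest element $w_0^I$ is the permutation fixing $1$ and reversing $\{2, 3, \ldots, n\}$; in particular $\ell(w_0^I) = \binom{n-1}{2}$. A direct check shows that $s_1$ is neither a left nor a right descent of $w_0^I$, while every other simple reflection is both a left and a right descent. A brief Robinson--Schensted computation (the longest increasing subsequence of the one-line notation has length $2$, and the longest decreasing subsequence has length $n-1$) gives the shape $(2,1^{n-2})$, so $w_0^I$ lies in $\mathcal{J}$. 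By the uniqueness of the element of $\mathcal{J}$ with prescribed descent data, one concludes $w_0^I = w_{1,1}$. Symmetrically, $w_{n-1,n-1}$ is the longest element of $W_{I'} = \langle s_1, \ldots, s_{n-2}\rangle$.

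Next I would invoke the parabolic invariance of Kazhdan--Lusztig polynomials: for $y \in W_I$, the polynomial $p_{e,y}$ computed in $\mathbb{H}_n$ coincides with the KL polynomial $p^{W_I}_{e,y}$ computed inside the Hecke algebra $\mathbb{H}_{W_I}$. This is a standard consequence of the characterization of the KL basis by bar-invariance and upper-triangularity: the natural inclusion $\mathbb{H}_{W_I} \hookrightarrow \mathbb{H}_n$ sending $H_w$ to $H_w$ is compatible with the bar involution, and the Bruhat order on $W_I$ is the restriction of the Bruhat order on $S_n$, so the image of $\kl^{W_I}_y$ satisfies the defining properties of $\kl_y$ in $\mathbb{H}_n$.

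Finally, applying \eqref{longest} inside $W_I$ to the longest element $w_0^I$ yields $p^{W_I}_{e, w_0^I} = v^{\ell(w_0^I)} = v^{\ell(w_{1,1})}$, which together with parabolic invariance completes the proof for $y = w_{1,1}$. The argument for $y = w_{n-1,n-1}$ is entirely analogous with $W_{I'}$ playing the role of $W_I$. No single step is a serious obstacle; the main non-trivial ingredient is parabolic invariance of KL polynomials, which is classical. As a cross-check, this is also consistent with \eqref{pdegrees} since $\mathbf{a}(w_{1,1}) = \ell(w_{1,1})$ forces $p_{e,w_{1,1}}$ to be a monomial in $v^{\ell(w_{1,1})}$, and $w_{1,1}$, being the longest element of a parabolic subgroup, is a Duflo element, so the monomial is non-zero.
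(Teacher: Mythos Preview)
Your proof is correct and follows essentially the same approach as the paper: identify $w_{1,1}$ and $w_{n-1,n-1}$ as longest elements of the maximal parabolic subgroups $\langle s_2,\dots,s_{n-1}\rangle$ and $\langle s_1,\dots,s_{n-2}\rangle$, invoke parabolic invariance of KL polynomials, and apply \eqref{longest} inside the parabolic. The paper's proof is more terse, simply asserting the identification of $w_{1,1}$ and $w_{n-1,n-1}$ and the parabolic invariance without the Robinson--Schensted verification or the cross-check via \eqref{pdegrees}, but the argument is the same.
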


\begin{proof}
The element $w_{n-1,n-1}$ is the longest element for the parabolic subgroup of 
$S_n$ generated by $\{1,\ldots, n-2\}$. The element $w_{1,1}$ is the longest 
element for the parabolic subgroup of $S_n$ generated by $\{2,\ldots,n-1\}$. 
Since the KL basis can be computed in the parabolic subgroups, the claim of the
lemma follows from \eqref{longest} applied to the Coxeter group $S_{n-1}$
with the corresponding identification of simple reflections.
\end{proof}

For $i,j \in \{ 0, \ldots,n\}$, put $p_{i,j} := p_{e,w_{i,j}}$ if $1\leq i,j \leq n-1$, and $p_{i,j} :=0$ otherwise.

\begin{lemma}
Suppose $w_{i,j}\in \jc$, with $i\neq j$. Then
\begin{equation} \label{eq_p4}
    (v+v^{-1})p_{i,j} =  p_{i-1,j} + p_{i+1,j} + \delta_{i,n-j} \cdot v^{\ell(w_0)},
\end{equation}
where $\delta$ denotes the Kronecker symbol.
\end{lemma}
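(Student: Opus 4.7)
The plan is to apply Lemma \ref{prop2p} with $y = w_{i,j}$ and $s = s_i$. This is permissible because $s_i$ is the unique left ascent of $w_{i,j}$: the element $w_{i,j} \in \jc$ lies in the right cell consisting of the elements whose left descent set equals $S \setminus \{s_i\}$. The lemma then gives
\[ (v+v^{-1})\,p_{i,j} \;=\; p_{e,\,s_iw_{i,j}} \;+\!\! \sum_{\substack{u\in S\setminus\{s_i\}\\ uw_{i,j} \sim_{\mathtt{L}} w_{i,j}}}\!\! p_{e,\,uw_{i,j}},\]
and the task is reduced to identifying the elements $s_iw_{i,j}$ and $uw_{i,j}$ appearing on the right.

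The key combinatorial observation is that left multiplication by a simple reflection preserves the right descent set. Since $w_{i,j}$ has right descent set $S \setminus \{s_j\}$, any element on the right-hand side that lies in $\jc$ necessarily lies in the same left cell as $w_{i,j}$ (column $j$ in Figure \ref{fig_bruhat_J}), so it equals $w_{k,j}$ for some $k$. Using the length formula $\ell(w_{k,j}) = \ell(w_0) - 1 - |n-k-j|$, one sees that column $j$ has a unique peak at $k = n-j$.

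For $s_iw_{i,j}$: since $s_iw_{i,j}>w_{i,j}$, standard $\mathbf{a}$-function monotonicity (combined with $s_iw_{i,j} \geq_{\mathtt{L}} w_{i,j}$) forces $s_iw_{i,j} \in \jc \cup \{w_0\}$. Comparing lengths, $s_iw_{i,j} = w_0$ exactly when $i+j = n$, in which case $p_{e,w_0} = v^{\ell(w_0)}$ by \eqref{longest}; otherwise $s_iw_{i,j}$ is the unique neighbor of $w_{i,j}$ in column $j$ of length $\ell(w_{i,j})+1$, namely $w_{i+1,j}$ if $i+j<n$ and $w_{i-1,j}$ if $i+j>n$. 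The sum is analyzed analogously: by Proposition \ref{muJ} and the Bruhat adjacencies in Figure \ref{fig_bruhat_J}, each non-zero term corresponds to a $w_{k,j}$ in column $j$ at length $\ell(w_{i,j})-1$ that is covered from below by $w_{i,j}$ through a simple reflection. Case by case ($i+j<n$, $i+j>n$, $i+j=n$), this yields respectively $p_{i-1,j}$, $p_{i+1,j}$, or $p_{i-1,j} + p_{i+1,j}$, with the convention $p_{0,j} = p_{n,j} = 0$ absorbing the boundary cases.

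Combining these contributions in each case reproduces $p_{i-1,j} + p_{i+1,j} + \delta_{i, n-j}\,v^{\ell(w_0)}$, proving \eqref{eq_p4}. The main obstacle is the matching between Bruhat covers in column $j$ and simple-reflection covers: one needs to know that each edge in column $j$ of Figure \ref{fig_bruhat_J} (and the boundary edge from the peak to $w_0$) is realized by left multiplication by a unique simple reflection, and to identify that reflection. Once this matching is in place, the three-case verification is routine.
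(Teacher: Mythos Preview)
Your approach is essentially the same as the paper's: apply Lemma~\ref{prop2p} with $s=s_i$, then identify the terms on the right-hand side using the Bruhat structure of $\mathcal{J}$ (Figure~\ref{fig_bruhat_J}) together with \eqref{longest}. The paper compresses all of this into a one-line reference, while you spell out the case analysis.

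One caution: your ``key combinatorial observation'' that left multiplication by a simple reflection preserves the right descent set is false as a general statement (e.g.\ in $S_4$, $w_{1,2}=s_2s_3s_2s_1$ has right descent set $\{s_1,s_3\}$, but $s_2w_{1,2}=s_3s_2s_1$ has right descent set $\{s_1\}$). What is true, and sufficient for your purposes, is: (a) if $s_iw>w$ then every right descent of $w$ remains a right descent of $s_iw$ (so $s_iw_{i,j}$, if in $\mathcal{J}$, lies in column $j$); and (b) the sum terms $uw_{i,j}$ already satisfy $uw_{i,j}\sim_{\mathtt L} w_{i,j}$ by hypothesis, which directly places them in column $j$. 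With this correction your argument goes through and matches the paper's.
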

\begin{proof}
This follows from (\ref{2p}) with $s=s_i$, (\ref{longest}) and Figure \ref{fig_bruhat_J}.
\end{proof}

\begin{proposition}\label{prop7}
Let $w=w_{i,j}\in \jc$. 
We have 
\begin{equation}\label{want}
    p_{e,w}=v^{\ell(w)}+v^{\ell(w)-2}+\cdots +v^{\ell(w)-2d(w)},
\end{equation}
where 
\begin{equation*}
    d(w)=\operatorname{min}\{i-1,j-1, n-1-i,n-1-j\}.
\end{equation*}
\end{proposition}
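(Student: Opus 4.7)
The plan is to verify the claimed formula \eqref{want} by showing it satisfies the recurrence \eqref{eq_p4} at every $(i,j)$ with $i\ne j$, and then conclude by uniqueness of the solution with the given boundary data.

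First I would record two natural symmetries: $p_{i,j}=p_{j,i}$ (from $w_{i,j}^{-1}=w_{j,i}$, since inversion swaps left and right descents) and $p_{i,j}=p_{n-i,n-j}$ (from $w_0 w_{i,j} w_0 = w_{n-i,n-j}$, since conjugation by $w_0$ sends $s_k$ to $s_{n-k}$). Both preserve $\ell(w_{i,j})$ and $d(w_{i,j})$, and hence also the claimed polynomial, so one can reduce to the wedge $i\le j$, $i+j\le n$. I would also record the explicit length
\[
\ell(w_{i,j}) \;=\; \tbinom{n-1}{2} + (i-1)+(j-1)-2\max\{0,\,i+j-n\},
\]
read off from Figure~\ref{fig_bruhat_J} where each arrow represents a length-one Bruhat cover. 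The corner cases $p_{1,1}=v^{\ell(w_{1,1})}$ and $p_{n-1,n-1}=v^{\ell(w_{n-1,n-1})}$ come from Proposition~\ref{p_e,w_I}.

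The core of the argument is to check that the candidate $p_{i,j} = \sum_{k=0}^{d(w_{i,j})} v^{\ell(w_{i,j})-2k}$ satisfies \eqref{eq_p4} for every $(i,j)$ with $i\ne j$. The left-hand side telescopes to
\[
(v+v^{-1})\sum_{k=0}^{d}v^{L-2k} \;=\; v^{L+1} + 2\sum_{k=1}^{d}v^{L-2k+1} + v^{L-2d-1},
\]
with $L=\ell(w_{i,j})$ and $d=d(w_{i,j})$. Substituting the candidate values for $p_{i\pm 1,j}$ and matching term by term requires a short case analysis on which of the four minima $\{i-1,j-1,n-1-i,n-1-j\}$ realizes $d(w_{i,j})$; this dictates how $d(w_{i\pm 1,j})$ and $\ell(w_{i\pm 1,j})$ relate to $d$ and $L$, with the Kronecker term $\delta_{i,n-j}v^{\ell(w_0)}$ precisely absorbing the $\pm 1$ jump in $\ell$ as $i$ crosses the anti-diagonal $i+j=n$.

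A counting argument then closes the proof: for each column $j$, the $n-2$ equations at $i\in\{1,\ldots,n-1\}\setminus\{j\}$, together with the corner values propagated via the two symmetries (giving $p_{1,j}$ and $p_{n-1,j}$), overdetermine the $n-1$ unknowns $p_{1,j},\ldots,p_{n-1,j}$ by one equation. Uniqueness of the actual Kazhdan--Lusztig polynomials guarantees that these $n$ conditions are consistent, so the candidate, once verified to satisfy every condition, must coincide with $p_{e,w_{i,j}}$. The main obstacle is the diagonal $i=j$: $p_{j,j}$ never appears on the LHS of \eqref{eq_p4}, but it enters two different equations (at $i=j-1$ and $i=j+1$) that must both yield the claimed formula; verifying this agreement is routine arithmetic but requires careful bookkeeping when the anti-diagonal $i+j=n$ falls between $i=j-1$ and $i=j+1$, because the $\delta$ term then appears asymmetrically on the two sides.
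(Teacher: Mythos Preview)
Your verify-then-invoke-uniqueness strategy is valid and genuinely different from the paper's argument. The paper proceeds by induction on $d(w)$: for the base case $d=0$ (reduced by symmetry to column $j=1$) it uses the degree constraint \eqref{pdegrees} coming from Lusztig's $\mathbf a$-function---that $p_{e,w}$ lives in degrees $[\mathbf a(\mathcal J),\ell(w)]$ with the bottom degree hit only at Duflo elements---to pin down $p_{2,1}$ and $p_{3,1}$, after which the rest of the column follows from the recurrence; the inductive step then reads $p_{d+1,j}$ directly off \eqref{eq_p4} applied at $(d,j)$. Your approach trades the external input \eqref{pdegrees} for a longer case check plus a linear-algebra uniqueness claim, which is more elementary but bulkier.

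There is, however, a real gap in your uniqueness step. ``Overdetermined by one'' together with consistency does \emph{not} force uniqueness: a consistent overdetermined linear system can still have a nontrivial kernel. What you actually need is that the associated homogeneous system has only the trivial solution. For an interior column $j$ this is immediate once both endpoints $p_{1,j}$ and $p_{n-1,j}$ are known, since the three-term recurrence with two consecutive zeros propagates to zero along each of the arcs $\{0,\dots,j\}$ and $\{j,\dots,n\}$. But for the boundary columns $j=1,\,n-1$ your counting is off: the symmetries only give $p_{n-1,1}=p_{1,n-1}$, which is equally unknown, so you have the single boundary datum $p_{1,1}$ and a \emph{square} (not overdetermined) system. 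Uniqueness there still holds, but it requires a short computation: writing $[m]=(v^m-v^{-m})/(v-v^{-1})$, the homogeneous recurrence with $y_1=0$ forces $y_i=[i-1]\,y_2$, and the terminal equation $(v+v^{-1})y_{n-1}=y_{n-2}$ then reads $[n-1]\,y_2=0$, whence $y=0$. Once this is supplied (and the boundary columns are solved first, so that the symmetries really do hand you $p_{1,j}=p_{j,1}$ and $p_{n-1,j}=p_{n-j,1}$), your outline goes through.
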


\begin{proof}
We proceed by induction on $d=d(w)$.

Let $d=0$. By symmetry (flipping the Dynkin diagram on one hand, and taking inverses of permutations on the other hand), it is enough  to consider $w=w_{i,1}$. 
If $i=1$, then Proposition \ref{p_e,w_I} gives \eqref{want}. If $i=2$, then (\ref{eq_p4}) gives \[ (v+v\inv)p_{2,1}= v^{a} + p_{3,1}, \] where $a:=\mathbf{a}(\mathcal{J})$. Since $\ell(w_{2,1})=a+1$, by \eqref{pdegrees}, we have $p_{2,1}=c v^{a+1}$ for some $c \in \mathbb Z$. For the same reason, $p_{3,1}$ does not have term proportional to $v^a$, so we get $p_{2,1} = v^{a+1}$ and $p_{3,1} = v^{a+2}$. From this we can obtain \eqref{want} for any $w_{i,1}$, using (\ref{eq_p4}) and a two-step induction.


Let $d = d(w)>0$. By symmetry, we may assume $w=w_{d+1,j}$, with $d<j<n-d$. We assume that (\ref{want}) is true for $w_{d,1}$ and (if exists) for $w_{d-1,1}$. Equation (\ref{eq_p4}) applied to $w=w_{d,j}$ gives
\[ (v+v^{-1})p_{d,j} =  p_{d-1,j} + p_{d+1,j}. \]
From this, it easily follows that (\ref{want}) is also true for $w_{d+1,j}$.
%
%
\end{proof}

For $i,j\in\{1,2,\dots,n-1\}$, we denote by $\mathbf{B}_n^{(i,j)}$ the set of all
$w\in \mathbf{B}_n$ such that $s_iw<w$ and $ws_j<w$.

\begin{corollary}\label{cor8}
We have 
\begin{equation}\label{eq14}
\displaystyle|\mathbf{B}_n|=\sum_{w\in\mathcal{J}}p_{e,w}(1),
\end{equation}
moreover, for each $i,j\in\{1,2,\dots,n-1\}$, we have
$|\mathbf{B}_n^{(i,j)}|=p_{e,w_{i,j}}(1)$. 
\end{corollary}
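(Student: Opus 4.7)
The plan is to reduce the global identity to its per-$(i,j)$ refinement and then to prove the latter by enumerating $\mathbf{B}_n^{(i,j)}$ directly, matching the count against the closed form for $p_{e,w_{i,j}}(1)$ provided by Proposition~\ref{prop7}.

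I begin by observing that the global identity follows from the refinement. Every bigrassmannian permutation has a unique pair of (left descent index, right descent index) in $\{1,\ldots,n-1\}^2$, so $\mathbf{B}_n=\bigsqcup_{(i,j)}\mathbf{B}_n^{(i,j)}$; meanwhile $(i,j)\mapsto w_{i,j}$ is a bijection between $\{1,\ldots,n-1\}^2$ and $\mathcal{J}$. Summing the per-$(i,j)$ identity over all pairs therefore yields the global one, so it suffices to prove the per-$(i,j)$ statement.

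For the per-$(i,j)$ statement, Proposition~\ref{prop7} evaluated at $v=1$ already gives
\[ p_{e,w_{i,j}}(1) \;=\; d(w_{i,j})+1 \;=\; \min\{i,j,n-i,n-j\}. \]
On the other side, I enumerate $\mathbf{B}_n^{(i,j)}$ explicitly. A permutation $w\in S_n$ with unique right descent at $s_j$ is Grassmannian of descent $j$, hence encoded by the $j$-element set $A=\{w(1),\ldots,w(j)\}\subset\{1,\ldots,n\}$ (with $w|_{[1,j]}$ and $w|_{[j+1,n]}$ both increasing). The additional requirement that $w$ has unique left descent at $s_i$ is equivalent to $w^{-1}$ being Grassmannian of descent $i$, which in terms of $A$ translates, via the rule that $w^{-1}$ has a descent at position $p$ iff $p+1\in A$ and $p\notin A$, into
\[ i+1\in A, \qquad i\notin A, \qquad\text{and}\qquad p+1\in A\ \Rightarrow\ p\in A \text{ for every }p\ne i. \]
Closure under predecessors on the intervals $\{1,\ldots,i-1\}$ and $\{i+1,\ldots,n\}$ separately then forces
\[ A\;=\;\{1,\ldots,k\}\,\cup\,\{i+1,\ldots,m\}, \qquad 0\le k\le i-1,\ \ i+1\le m\le n, \]
with the single numerical constraint $|A|=k+(m-i)=j$.

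It remains to count the pairs $(k,m)$ satisfying $k+m=i+j$, $0\le k\le i-1$, and $i+1\le m\le n$; this count equals $\min(i-1,j-1)-\max(0,i+j-n)+1$, which collapses to $\min\{i,j,n-i,n-j\}$ after a four-case analysis based on the signs of $i-j$ and $i+j-n$, matching $p_{e,w_{i,j}}(1)$. I do not expect a serious obstacle: the reduction to the per-$(i,j)$ identity is a bookkeeping step, the shape of $A$ is forced by straightforward closure, and the final tally is a routine case split.
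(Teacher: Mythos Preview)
Your argument is correct. You reduce the global equality to the per-$(i,j)$ refinement via the obvious disjoint union, and then match $p_{e,w_{i,j}}(1)=\min\{i,j,n-i,n-j\}$ from Proposition~\ref{prop7} against a direct count of $\mathbf{B}_n^{(i,j)}$. The translation of descents of $w^{-1}$ into the condition ``$p+1\in A$ and $p\notin A$'' is right, the resulting shape $A=\{1,\dots,k\}\cup\{i+1,\dots,m\}$ is forced exactly as you say, and the final count $\min(i-1,j-1)-\max(0,i+j-n)+1=\min\{i,j,n-i,n-j\}$ is a clean two-case check on the sign of $i+j-n$ (the further split on the sign of $i-j$ is not actually needed). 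One small point worth making explicit: the constraints $i\notin A$ and $i+1\in A$ automatically exclude $A=\{1,\dots,j\}$, so every such $A$ genuinely gives a permutation with a right descent at $j$; you might state this to preempt the objection that $w=e$ could sneak in.

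This is a genuinely different route from the paper's own proof. The paper does not reduce the global identity to the refined one; instead it evaluates both sides of the global equation independently to the tetrahedral number $\tfrac{(n-1)n(n+1)}{6}$, invoking Proposition~\ref{prop7} for the KL side and citing \cite{Ko2} for $|\mathbf{B}_n|$. For the refined identity it cites \cite{EH}. Your approach has the advantage of being entirely self-contained: the enumeration of $\mathbf{B}_n^{(i,j)}$ is elementary and avoids any appeal to the literature on bigrassmannian permutations. The paper's approach, on the other hand, situates the result within the existing combinatorial picture (the tetrahedron of Subsection~\ref{s3.1} and the results of \cite{EH,Ko2}), which is useful context for the rest of the paper.
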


\begin{proof}
By Proposition~\ref{prop7}, the right hand side of \eqref{eq14}
evaluates to the tetrahedral number $\frac{(n-1)n(n+1)}{6}$. That the left hand
side of \eqref{eq14} is given by the same number follows from the main result of \cite{Ko2}.
The refined version, for each $i,j\in\{1,2,\dots,n-1\}$, follows from
the main result of \cite{EH}. Both statements are best seen by comparing the
main result of \cite{EH} to the picture described in Subsection~\ref{s3.1}, 
where the right hand side \eqref{eq14}
is realized as a tetrahedron in $\mathbb{R}^3$ and the elements corresponding to
$\mathbf{B}_n^{(i,j)}$ consist of all elements of this tetrahedron aligned along
a vertical line.
\end{proof}

\begin{proposition}\label{prop9}
Let $i,j\in\{1,2,\dots,n-1\}$. The restriction of the Bruhat order to each 
$\mathbf{B}_n^{(i,j)}$ is a chain, moreover, for $x,y\in \mathbf{B}_n^{(i,j)}$
such that $x<y$, there exist $w\in \mathbf{B}_n\setminus\mathbf{B}_n^{(i,j)}$
such that $x<w<y$.
\end{proposition}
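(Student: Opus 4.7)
The plan is to parameterize $\mathbf{B}_n^{(i,j)}$ explicitly via the classical Grassmannian-to-partition bijection. A bigrassmannian $w\in\mathbf{B}_n^{(i,j)}$ has unique right descent $j$, so $w$ is Grassmannian with that descent and is determined by the set $S_w=\{w(1),\dots,w(j)\}\subseteq[1,n]$. The left descents of such a Grassmannian are exactly the integers $i'$ with $i'\notin S_w$ and $i'+1\in S_w$; requiring a unique such descent equal to $i$ forces $S_w=[1,i-g]\cup[i+1,j+g]$ for a unique integer $g$ in the range $\max(1,i-j+1)\le g\le\min(i,n-j)$. Denoting this permutation by $v_g$, one recovers $|\mathbf{B}_n^{(i,j)}|=\min(i,j,n-i,n-j)$ from Corollary~\ref{cor8}; moreover, the partition of $v_g$ under the Grassmannian-to-partition bijection for right descent $j$ is precisely the rectangle of shape $(j-i+g)\times g$.

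The Bruhat order on Grassmannian permutations with a fixed right descent is given by containment of partitions, and the rectangles $(j-i+g)\times g$ are strictly nested in both dimensions as $g$ grows. Hence $\mathbf{B}_n^{(i,j)}$ is exhausted by the Bruhat chain $v_{g_{\min}}<v_{g_{\min}+1}<\dots<v_{g_{\max}}$, which establishes the first claim.

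For the second claim, it suffices to produce an intermediate element between consecutive $v_g<v_{g+1}$, and the assumption $x<y$ in $\mathbf{B}_n^{(i,j)}$ forces $\min(i,j,n-i,n-j)\ge 2$, and in particular $i\ge 2$. I would let $u$ be the Grassmannian permutation with right descent $j$ whose partition is the rectangle $(j-i+g+1)\times g$, i.e.\ $\lambda(v_g)$ with one added row. Since this shape is still a rectangle, $u$ is bigrassmannian; applying the same parameterization with $(i-1,j)$ in place of $(i,j)$ identifies $u$ with an element of $\mathbf{B}_n^{(i-1,j)}\subseteq\mathbf{B}_n\setminus\mathbf{B}_n^{(i,j)}$. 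The strict inclusions $\lambda(v_g)\subsetneq\lambda(u)\subsetneq\lambda(v_{g+1})$ translate to $v_g<u<v_{g+1}$ in the Bruhat order, finishing the argument.

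The point requiring the most care is the combinatorial equivalence between ``unique left descent at $i$'' and the two-block form of $S_w$, together with the parameter bookkeeping that pins down the range $[\max(1,i-j+1),\min(i,n-j)]$ for $g$ and matches all four cases of $\min(i,j,n-i,n-j)$; once that description is in place, both parts of the statement reduce to transparent manipulations of rectangular Young diagrams.
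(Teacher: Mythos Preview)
Your argument is correct. The parameterization of $\mathbf{B}_n^{(i,j)}$ by the integer $g$ via $S_w=[1,i-g]\cup[i+1,j+g]$ is accurate, the induced partitions are precisely the rectangles $(j-i+g)\times g$, and the standard fact that Bruhat order on Grassmannians with a fixed descent coincides with containment of partitions then yields the chain. The intermediate element you construct for consecutive $v_g<v_{g+1}$ indeed lands in $\mathbf{B}_n^{(i-1,j)}$, and your observation that $|\mathbf{B}_n^{(i,j)}|\ge 2$ forces $i\ge 2$ makes this well-defined. The reduction from arbitrary $x<y$ to the consecutive case is also fine, since $x=v_a<v_{a+1}\le y$ already gives what is needed.

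The paper's own proof takes a different route: it simply invokes the main result of \cite{EH}, where the poset $\mathbf{B}_n$ is realized as the integer points of a tetrahedron in $\mathbb{R}^3$ (see Subsection~\ref{s3.1}), with the fibres $\mathbf{B}_n^{(i,j)}$ appearing as vertical lines and the required intermediate elements read off from the diamond pattern of the Hasse diagram. Your approach is more self-contained and stays entirely within the classical Grassmannian/partition dictionary, which has the advantage of not relying on an external reference; the paper's approach, by contrast, ties the statement directly to the tetrahedral picture that is used elsewhere in the article (e.g.\ in Corollary~\ref{cor8} and Proposition~\ref{prop_explicit}), so it fits more naturally into the surrounding narrative.
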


\begin{proof}
This follows directly from the main result of \cite{EH}.
The poset $\mathbf{B}_n$ can be realized as a tetrahedron in
$\mathbb{R}^3$, see Subsection~\ref{s3.1}. The subset
$\mathbf{B}_n^{(i,j)}$ consist of all elements of this tetrahedron aligned along
a vertical line. If such a line contains more than one element of 
the tetrahedron, we have a diamond shaped subset of $\mathbf{B}_n$
and the elements of this diamond outside the original line provide
the necessary $w$, see for example the points $(2,2,5)$ and $(2,2,3)$, in the case $n=4$, where $w$ can be chosen as any of the elements
$(2,1,4)$, $(1,2,4)$, $(3,2,4)$ or $(2,3,4)$.
\end{proof}

\subsection{Proof of Theorem~\ref{thm1}\eqref{thm1.1}, \eqref{thm1.2}}\label{s2.4}

Corollary~\ref{cor5} gives one direction of \eqref{thm1.1}. For the other direction, we need for any
$w\in\mathbf{B}_n$, the module $\Delta_e/\Delta_w$ has simple socle.
Assume that $s_i$ and $s_j$ are such that $s_iw<w$ and $ws_j<w$. 
Then, by Proposition~\ref{prop6}, $L_{w_{i,j}}$ is the only possible 
simple subquotient in the socle of $\Delta_e/\Delta_w$. We need to 
prove that the multiplicity of $L_{w_{i,j}}$ in the socle of 
$\Delta_e/\Delta_w$ equals $1$.

By Proposition~\ref{prop9}, there is a chain 
\begin{displaymath}
w_1<u_1<w_2<u_2<\dots<w_{k-1}<u_{k-1}<w_k, 
\end{displaymath}
where $\mathbf{B}_n^{(i,j)}=\{w_1,w_2,\dots,w_k\}$
and all $u_i\in \mathbf{B}_n\setminus \mathbf{B}_n^{(i,j)}$. This chain gives an a sequence of inclusions
\begin{displaymath}
\Delta_e\supsetneq \Delta_{w_1}\supsetneq \Delta_{u_1}\supsetneq 
\Delta_{w_2}\supsetneq \Delta_{u_2}
\supsetneq\dots\supsetneq \Delta_{w_k}
\end{displaymath}
which, in turn, gives rise to a sequence of projections
\begin{displaymath}
\Delta_e/\Delta_{w_1}\twoheadleftarrow
\Delta_e/\Delta_{u_1}\twoheadleftarrow
\Delta_e/\Delta_{w_2}\twoheadleftarrow
\Delta_e/\Delta_{u_2}\twoheadleftarrow\dots
\twoheadleftarrow\Delta_e/\Delta_{w_k}.
\end{displaymath}
This implies that the socle of each $\Delta_e/\Delta_{w_i}$ contains a summand
which is not a summand of the socle of $\Delta_e/\Delta_{w_j}$, for any $j<i$.
From Proposition~\ref{prop7} and Corollary~\ref{cor8} we obtain that 
$k=P_{e,w_{i,j}}(1)$ and hence the socle of each $\Delta_e/\Delta_{w_i}$
contains a unique summand which is not in the socle of any 
$\Delta_e/\Delta_{w_j}$, for $j<i$. 

Therefore, for both claims \eqref{thm1.1} and  \eqref{thm1.2}, 
it is enough to prove that the socle of 
each $\Delta_e/\Delta_{w_i}$ is simple. We
argue that the socle constituents of any $\Delta_e/\Delta_{w_j}$, where $j<i$,
cannot contribute to the socle of $\Delta_e/\Delta_{w_i}$. Assume that this is not
the case and some socle constituent of some $\Delta_e/\Delta_{w_j}$ contributes
to the socle of $\Delta_e/\Delta_{w_i}$. Then it also must contribute to the socle
of $\Delta_e/\Delta_{u_{i-1}}$, since $w_j<u_{i-1}<w_i$. But this contradicts
Proposition~\ref{prop6} and completes the proof.

\subsection{Proof of Theorem~\ref{thm1}\eqref{thm1.3} and \eqref{thm1.4}}\label{s2.6}

Both the statements follow from the bijection given in Theorem~\ref{thm1}\eqref{thm1.2}
and the fact that $\Delta_x\subset \Delta_y$ is equivalent to $x\geq y$.

\subsection{Proof of Corollary~\ref{cor2}}\label{s2.9}

First of all, we note that the equality
\begin{displaymath}
\dim\mathrm{Ext}_{\cO}^{1}(L_x,\Delta_y)=\dim\mathrm{Ext}_{\cO}^{1}(\nabla_y,L_x)
\end{displaymath}
follows by applying the simple preserving duality on $\mathcal{O}_0$.
Therefore we only need to prove the following: 
\begin{equation}\label{eq1}
\dim\mathrm{Ext}_{\cO}^{1}(L_x,\Delta_y)=
\begin{cases}
\mathbf{c}(xy), & x=w_0;\\
1, & x\in \Phi(\mathbf{BM}(y));\\
0, & \text{otherwise}. 
\end{cases}
\end{equation}

Consider first the case $x=w_0$. In this case $L_{w_0}=\Delta_{w_0}$.
Applying the twisting functor $T_{y}$ and using the fact that 
it is acyclic on Verma modules, see \cite[Theorem~2.2]{AS}, we observe that
\begin{displaymath}
\mathrm{Ext}_{\cO}^{1}(\Delta_{y^{-1}w_0},\Delta_e)=
\mathrm{Ext}_{\cO}^{1}(\Delta_{w_0},\Delta_y).
\end{displaymath}
By  \cite[Theorem~32]{Ma}, the dimension of $\mathrm{Ext}_{\cO}^{1}(\Delta_{y^{-1}w_0},\Delta_e)$
equals $\mathbf{c}(y^{-1}w_0)=\mathbf{c}(xy)$. This establishes
\eqref{eq1} in the case $x=w_0$.

Assume now that $x\neq w_0$. Let 
\begin{equation}\label{eq2}
0\to \Delta_y\to M \to L_x \to 0 
\end{equation}
be a non-split short exact sequence. Since $L_x$ is a simple object and
\eqref{eq2} is non-split, the socle of $M$ coincides with the socle of $\Delta_y$
and hence is isomorphic to $L_{w_0}$. In particular, $M$ is a submodule of the
injective envelope $I_{w_0}$ of $L_{w_0}$. As the multiplicity of 
$L_{w_0}$ in $M$ is one, all nilpotent endomorphisms of $I_{w_0}$ send $M$ to $0$. 
Since $\Delta_e$, being a projective object in $\cO$, is copresented by projective-injective objects in $\cO$ (see \cite[\S~3.1]{KSX}) and since $I_{w_0}$ is the only indecomposable projective-injective object in $\cO_0$, it follows that $M$ is a submodule of $\Delta_e$.
 This means that
$L_x\cong M/\Delta_y$ corresponds to a socle constituent of $\Delta_e/\Delta_y$.
Therefore, for $x\neq w_0$,
formula \eqref{eq1} follows from Theorem~\ref{thm1}\eqref{thm1.4}. 

\section{Singular blocks}\label{s7.2}

In this subsection we extend our results on $\cO_0$ to an arbitrary block $\cO_\mu$ (that is, the Serre subcategory of $\cO$ with the simple objects $L(w\cdot \mu)$, for $w$ in the
$\mu$-integral subgroup $S_n^{(\mu)}$ of $S_n$) thus to the entire category $\cO$.
Let $\mu\in\mathfrak{h}^*$ be an $S_n^{(\mu)}$-dominant weight.
The first, standard, remark is that, up to equivalence with a block for some
other $n$, it is enough to consider the case when
$\mu$ is integral and hence $S_n^{(\mu)}=S_n$, see Soergel's combinatorial description of
$\mathcal{O}$ in \cite{So2}. Therefore, from now on we assume $\mu$ to be integral.

Note that, if $I= \{i\in\{1,\cdots,n-1\}\ |\  s_i\cdot\mu=\mu\}$ 
and $S_I$ is the parabolic subgroup of $S_n$ generated by $I$, then $S_I\cong S_\mu:=\operatorname{Stab}_W(\mu)$. 
So, we have $L(w\cdot\mu)\cong L(wz\cdot\mu)$ and $\Delta(w\cdot\mu)\cong\Delta(wz\cdot\mu)$, for any $z\in S_\mu$.
Given $w\in S_n$, we denote by $\underline{w}$ and $\overline{w}$ the unique 
shortest and the unique longest coset representatives for the coset $wS_\mu$ in $S_n/S_\mu$.
We have the indecomposable projective functors
\[\theta^0_\mu:\cO_0\to \cO_\mu\qquad\text{ and }\qquad \theta^\mu_0:\cO_\mu\to\cO_0\]
called {\em translation to the $\mu$-wall} and
{\em translation out of the $\mu$-wall}, respectively. (These functors are sometimes denoted by $T_0^\mu$ and $T_\mu^0$, respectively, e.g., in \cite{Hu}.)
They are biadjoint, exact and determined uniquely, up to isomorphism, by
$\theta^0_{\mu}\Delta_e=\Delta_\mu$ and $\theta_0^{\mu}\Delta_\mu=P_{\overline{e}}$,
respectively, see \cite{BG}. In particular, from the biadjointness it follows
that the action of the functor $\theta^0_\mu$ on simple
modules is given, for $w\in S_n$, by
\begin{displaymath}
\theta^0_\mu L_w\cong
\begin{cases}
L(w\cdot \mu),& w=\overline{w};\\
0,&\text{otherwise}.
\end{cases}
\end{displaymath}
On the level of the Grothendieck group, the functor $\theta_0^{\mu}\theta^0_{\mu}$
corresponds to the right multiplication with the sum of all elements in $S_\mu$,
see \cite[\S~3.4]{BG}.

\begin{proposition}\label{socleI}
Let $x,y\in S_n$ be such that $x<y$. 
Then we have \[\operatorname{soc}(\Delta(x\cdot\mu)/\Delta(y\cdot\mu)) \cong 
\theta^0_\mu(\operatorname{soc}\Delta_{\underline{x}}/\Delta_{\underline{y}}).\]
\end{proposition}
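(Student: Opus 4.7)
My plan is to use the translation functor $\theta^0_\mu$ to reduce to the regular block $\cO_0$, where Theorem~\ref{thm1} and the preparatory material of Section~\ref{s2} apply. First, since $\theta^0_\mu$ is exact with $\theta^0_\mu\Delta_w\cong\Delta(w\cdot\mu)$, applying it to the short exact sequence $0\to\Delta_{\underline y}\to\Delta_{\underline x}\to\Delta_{\underline x}/\Delta_{\underline y}\to 0$ gives $\theta^0_\mu(\Delta_{\underline x}/\Delta_{\underline y})\cong\Delta(x\cdot\mu)/\Delta(y\cdot\mu)$. Writing $N:=\Delta_{\underline x}/\Delta_{\underline y}$, the task becomes to prove $\operatorname{soc}(\theta^0_\mu N)\cong\theta^0_\mu\operatorname{soc}(N)$.

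For the inclusion $\theta^0_\mu\operatorname{soc}(N)\subseteq\operatorname{soc}(\theta^0_\mu N)$, exactness of $\theta^0_\mu$ together with the dichotomy that $\theta^0_\mu L_w$ is either $L(w\cdot\mu)$ (when $w=\overline w$) or zero implies $\theta^0_\mu$ sends semisimples to semisimples, hence $\theta^0_\mu\operatorname{soc}(N)$ embeds into $\operatorname{soc}(\theta^0_\mu N)$. To check that no summand is killed, I would show every simple in $\operatorname{soc}(N)$ is $L_w$ with $w=\overline w$. Since $N\hookrightarrow\Delta_e/\Delta_{\underline y}$, Proposition~\ref{prop3} places each socle constituent in $\jc$. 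Then the $\Theta$-dual of Proposition~\ref{prop6}---asserting that if $L_w$ with $w\in\jc$ lies in $\operatorname{soc}(\Delta_e/\Delta_{x'})$ and $ws>w$, then $x's<x'$---combined with $\underline y\,s_i>\underline y$ for all $i\in I$ (since $\underline y$ is the shortest coset representative), forces $ws_i<w$ for every $i\in I$, equivalently $w=\overline w$.

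For the reverse inclusion I return to $\cO_0$ via adjunction:
\[
\operatorname{Hom}_{\cO_\mu}(L(z\cdot\mu),\theta^0_\mu N)\cong\operatorname{Hom}_{\cO_0}(\theta^\mu_0 L(z\cdot\mu),N)
\]
for each simple $L(z\cdot\mu)$ with $z=\overline z$. A parallel adjunction calculation shows $\operatorname{top}(\theta^\mu_0 L(z\cdot\mu))=L_z$, since $\operatorname{Hom}(\theta^\mu_0L(z\cdot\mu),L_v)\cong \operatorname{Hom}(L(z\cdot\mu),\theta^0_\mu L_v)$ vanishes unless $v=z$ among longest coset representatives. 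This yields a natural injection $\operatorname{Hom}(L_z,N)\hookrightarrow\operatorname{Hom}(\theta^\mu_0 L(z\cdot\mu),N)$, and the proof is complete once this is seen to be a bijection, i.e., that every $\phi\colon\theta^\mu_0 L(z\cdot\mu)\to N$ factors through its top.

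This factorization claim is the main obstacle. If $\phi$ did not factor, its image would be a non-simple quotient of $\theta^\mu_0 L(z\cdot\mu)$ with top $L_z$ sitting inside $N$, with socle among the $L_w$'s with $w\in\jc$ and $w=\overline w$. By Proposition~\ref{muJ}, a non-split extension of $L_z$ by any such $L_w$ requires $z$ and $w$ to be adjacent in the Bruhat graph of $\jc$; I would rule this out by analyzing the Loewy structure of the rigid self-dual module $\theta^\mu_0 L(z\cdot\mu)$ (computable via Kazhdan--Lusztig polynomials for the projective endofunctor $\theta^\mu_0\theta^0_\mu$) in combination with the submodule lattice of $N$ inherited from the rigid module $\Delta_e$. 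An alternative, and likely cleaner, route is to replay the proof of Theorem~\ref{thm1} directly in $\cO_\mu$ (adapting Propositions~\ref{prop3}--\ref{prop9} and Corollary~\ref{cor8} to the singular block), thereby computing both socles independently and matching them via the bijection of Theorem~\ref{thm1}\eqref{thm1.2}; this is presumably the strategy behind Theorem~\ref{thm2I}.
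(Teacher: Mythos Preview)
Your forward inclusion is correct and in fact stronger than required: you establish that no socle constituent of $N$ is killed by $\theta^0_\mu$, whereas the statement only asks for $\theta^0_\mu(\operatorname{soc}\,N)\cong\operatorname{soc}(\theta^0_\mu N)$.

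The reverse inclusion has a genuine gap exactly where you flag it. The factorization of $\phi\colon\theta^\mu_0 L(z\cdot\mu)\to N$ through its simple top is not proved, and neither of your sketched completions is easy to carry out: the Loewy analysis of $\theta^\mu_0 L(z\cdot\mu)$ would need more structural input than you have available, and replaying Theorem~\ref{thm1} inside $\cO_\mu$ is not what the paper does (Theorem~\ref{thm2I} is deduced \emph{from} this proposition, not conversely).

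The paper sidesteps the obstacle by passing from $\Hom$ to $\Ext^1$. Given $L(w\cdot\mu)\hookrightarrow\Delta(x\cdot\mu)/\Delta(y\cdot\mu)$, the preimage in $\Delta(x\cdot\mu)$ yields a nonsplit extension, so $\Ext^1_\cO(L(w\cdot\mu),\Delta(y\cdot\mu))\neq 0$. Adjunction turns this into $\Ext^1_\cO(L_{\overline w},\theta^\mu_0\Delta(y\cdot\mu))\neq 0$, and the Verma flag of $\theta^\mu_0\Delta(y\cdot\mu)$ (with subquotients $\Delta_{\underline y\,z}$ for $z\in S_\mu$) forces $\Ext^1_\cO(L_{\overline w},\Delta_{\underline y\,z})\neq 0$ for some $z$. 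Now Corollary~\ref{cor2}, already proved in the regular block, immediately gives $\overline w\in\mathcal{J}$ and places $L_{\overline w}$ in $\operatorname{soc}(\Delta_e/\Delta_{\underline y\,z})$. Two short steps---using $\theta^0_\mu(\Delta_{\underline y}/\Delta_{\underline y\,z})=0$ and graded multiplicity-freeness of the relevant simples in $\Delta_e$---then locate $L_{\overline w}$ in $\operatorname{soc}(\Delta_{\underline x}/\Delta_{\underline y})$. The point is that Corollary~\ref{cor2} packages precisely the $\Ext^1$ information needed, so the hard work is already done; your $\Hom$-based route has no analogous ready-made tool to invoke.
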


\begin{proof}
We may assume $x=\underline x$ and $y=\underline y$.

Suppose $L_w$ is a socle component of $\Delta_x/\Delta_y$. 
Then, by exactness, the module $\theta^0_\mu L_w$, whenever it is nonzero, that is,
whenever $w=\overline{w}$, is a socle component of 
\[\theta^0_\mu (\Delta_x/\Delta_y) \cong (\theta^0_\mu\Delta_x)/(\theta^0_\mu\Delta_y)
\cong \Delta(x\cdot\mu)/\Delta(y\cdot\mu).
\]

Now, assume that $L(w\cdot\mu)$ is a socle component of $\Delta(x\cdot\mu)/\Delta(y\cdot\mu)$.  
Then we have
\[\Ext_{\cO}^1(L(w\cdot \mu),\Delta(y\cdot\mu))\neq 0.\]
By adjunction, we also have
\[\Ext_{\cO}^1(L(w\cdot \mu),\Delta(y\cdot\mu))\cong \Ext_{\cO}^1(\theta^0_\mu L_{\overline{w}},\Delta(y\cdot\mu))\cong \Ext_{\cO}^1(L_{\overline{w}},\theta^\mu_0\Delta(y\cdot\mu)).\]
As $\theta^\mu_0$ is a projective functor, $\theta^\mu_0\Delta(y\cdot\mu)$ has a Verma flag.
As, on the level of the Grothendieck group, $\theta_0^{\mu}\theta^0_{\mu}$
corresponds to the right multiplication with the sum of all elements in $S_\mu$,
the Verma flag of  $\theta^\mu_0\Delta(y\cdot\mu)$ has
subquotients $\Delta_{yz}$, where $z\in S_\mu$, each appearing with multiplicity one.
It follows that $\Ext_{\cO}^1(L_{\overline{w}},\Delta_{yz})\neq 0$, for some $z\in S_\mu$, which means  $\overline{w}\in\mathcal J$ by Corollary~\ref{cor2}.
Moreover, we obtain that $L_{\overline{w}}$ appears in the socle of $\Delta_e/\Delta_{yz}$.
Since $\theta^0_\mu L_{\overline{w}}\neq 0$ while $\theta^0_\mu(\Delta_{y}/\Delta_{yz})=0$, the subquotient $L_{\overline{w}}$ is also contained in the socle of $\Delta_e/\Delta_{y}$.
To see that $L_{\overline{w}}$ is in the socle of $\Delta_x/\Delta_y$, we note that $\Delta_e/\Delta_x$ cannot contain this subquotient in the socle, for in that case $\theta^0_\mu(\Delta_e/\Delta_x)\cong \Delta(\mu)/\Delta(x\cdot\mu)$ would contain $L(w\cdot\mu)$ in the socle, contradicting our assumption (note that the graded multiplicities of simples from $\mathcal J$ in $\Delta_e$, as well as of the corresponding translated simples in 
$\Delta(\mu)=\theta^0_\mu\Delta_e$  are one or zero, so we can distinguish all such simple subquotients).
The proof is complete.
\end{proof}

\begin{theorem}\label{thm2I}
Let $x,y\in S_n$ and let $\mu$ be an integral, dominant weight. Then we have
\begin{displaymath}
\dim\mathrm{Ext}_{\cO}^{1}(L(x\cdot \mu),\Delta(y\cdot \mu))=\dim\mathrm{Ext}_{\cO}^{1}(\nabla(y\cdot\mu),L(x\cdot\mu))=
\begin{cases}
\mathbf{c}(\overline{x}\underline{y})-|I|, & \overline{x}=w_0;\\
1, & \overline{x}\in \Phi(\mathbf{BM}(\underline{y}));\\
0, & \text{otherwise}. 
\end{cases}
\end{displaymath}
\end{theorem}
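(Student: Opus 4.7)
The duality equality is immediate from the simple-preserving duality on $\cO_\mu$, exactly as in the proof of Corollary~\ref{cor2}; hence it suffices to compute $\dim\mathrm{Ext}^1_{\cO_\mu}(L(x\cdot\mu),\Delta(y\cdot\mu))$, and we may take $x=\overline x$, $y=\underline y$ throughout. The plan is to treat the cases $\overline x\neq w_0$ and $\overline x=w_0$ separately: the first by the socle argument of Corollary~\ref{cor2} combined with Proposition~\ref{socleI}, the second by a twisting reduction to a singular-block analogue of Mazorchuk's Theorem~32.

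For $\overline x\neq w_0$, I would adapt the proof of Corollary~\ref{cor2} almost verbatim. Given a nonsplit extension $0\to\Delta(y\cdot\mu)\to M\to L(x\cdot\mu)\to 0$, one has $\soc(M)=L(w_0\cdot\mu)$ with multiplicity one (since the socle of every Verma in $\cO_\mu$ is the antidominant simple $L(w_0\cdot\mu)$, and $L(x\cdot\mu)\not\cong L(w_0\cdot\mu)$ as $\overline x\neq w_0$). Thus $M\hookrightarrow I(w_0\cdot\mu)$, and the fact that $\Delta(\mu)$ is projective in $\cO_\mu$, hence copresented by projective-injective modules (\cite[\S~3.1]{KSX}), forces $M\subseteq\Delta(\mu)$ by the same argument as in Corollary~\ref{cor2}. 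Therefore $\dim\mathrm{Ext}^1(L(x\cdot\mu),\Delta(y\cdot\mu))$ equals the multiplicity of $L(x\cdot\mu)$ in $\soc(\Delta(\mu)/\Delta(\underline y\cdot\mu))$. By Proposition~\ref{socleI} and Theorem~\ref{thm1}(\ref{thm1.4}), this socle is $\theta^0_\mu$ applied to the multiplicity-free sum of $L_{\Phi(w)}$ over $w\in\mathbf{BM}(\underline y)$, and a summand survives the translation precisely when $\Phi(w)$ is its own longest coset representative. Since $\overline x$ is such a representative by definition, the multiplicity is $1$ if $\overline x\in\Phi(\mathbf{BM}(\underline y))$ and $0$ otherwise.

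For $\overline x=w_0$, I would use $L(w_0\cdot\mu)=\Delta(w_0\cdot\mu)$ and mimic the twisting step of Corollary~\ref{cor2}. Setting $v:=\underline y^{-1}w_0$, the length identity $\ell(\underline y)+\ell(v)=\ell(w_0)$, combined with the acyclicity of $T_{\underline y}$ on Verma modules (\cite[Theorem~2.2]{AS}) and the derived self-equivalence $\mathcal{L}T_{\underline y}$, gives $T_{\underline y}\Delta(v\cdot\mu)\cong\Delta(w_0\cdot\mu)$ and $T_{\underline y}\Delta(\mu)\cong\Delta(y\cdot\mu)$, whence
\[
\mathrm{Ext}^1(\Delta(w_0\cdot\mu),\Delta(y\cdot\mu))\cong\mathrm{Ext}^1(\Delta(v\cdot\mu),\Delta(\mu)).
\]
I would then invoke \cite[Theorem~32]{Ma} in the singular block $\cO_\mu$ to conclude that the right-hand side has dimension $\mathbf{c}(v)-|I|=\mathbf{c}(w_0\underline y)-|I|$, using $\mathbf{c}(v)=\mathbf{c}(v^{-1})$. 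Should that reference be stated only for regular blocks, I would transfer it by adjunction as $\mathrm{Ext}^1(\theta^0_\mu\Delta_{\underline v},\theta^0_\mu\Delta_e)\cong\mathrm{Ext}^1(\Delta_{\underline v},P_{\overline e})$ and then use the unit short exact sequence $0\to\Delta_e\to P_{\overline e}\to Q\to 0$ in $\cO_0$ to track the expected $|I|$-dimensional kernel arising from the elementary Mazorchuk extensions indexed by simple reflections in $S_\mu$. This last step is the main obstacle: one needs to match the correction $-|I|$ exactly, identifying the killed extensions as precisely those indexed by $s\in S_\mu$ and verifying the combinatorial input $I\subseteq\operatorname{supp}(v)$, so that all $|I|$ such extensions really exist in the regular block to be killed upon translation through the $\mu$-wall.
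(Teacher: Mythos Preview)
Your treatment of the case $\overline x\neq w_0$ is correct and essentially the paper's approach: the paper also reduces this case to the socle description via Proposition~\ref{socleI} (though it phrases this as ``the proof of Proposition~\ref{socleI} takes care of the case'' rather than redoing the embedding-into-$\Delta(\mu)$ argument in the singular block).

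The case $\overline x=w_0$, however, has a genuine gap that you yourself flag. The reference \cite[Theorem~32]{Ma} is stated only for regular blocks, so your first move does not close the argument. Your fallback---twist in $\cO_\mu$ and then translate---runs into a bookkeeping problem you do not resolve: after adjunction you land in $\mathrm{Ext}^1(\Delta_{v'},P_{\overline e})$ for \emph{some} coset representative $v'$ of $v=\underline y^{-1}w_0$, and the long exact sequence coming from $0\to\Delta_e\to P_{\overline e}\to Q\to 0$ feeds in $\mathbf c(v')$, not $\mathbf c(v)$. These need not coincide (e.g.\ $\underline v$ may have strictly smaller support than $v$), and you also need to control both $\mathrm{Hom}(\Delta_{v'},Q)$ and $\mathrm{Ext}^1(\Delta_{v'},Q)$, where $Q$ has a full Verma flag over $S_\mu\setminus\{e\}$, not just the simple reflections. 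So the identification of the ``killed'' $|I|$ extensions is not as clean as you suggest.

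The paper avoids this by translating \emph{before} twisting: it writes $L(w_0\cdot\mu)=\theta^0_\mu L_{w_0}$, applies adjunction to get $\ext^1(L_{w_0},\theta^\mu_0\Delta(y\cdot\mu))$, and then works entirely in the \emph{graded} regular block. The graded Verma filtration of $\theta^\mu_0\Delta(y\cdot\mu)$ has bottom piece $\Delta_{\underline y}$, so the regular formula contributes exactly $\mathbf c(w_0\underline y)$; the remaining $|I|$ correction is read off from $\hom(L_{w_0},\bigoplus_{s\in I}\Delta_{\underline y s}\langle 1\rangle)$ in a fixed degree, and vanishing in all other degrees is established by a separate (and nontrivial) argument analysing a quotient of $I_{w_0}$ via Koszul duality and the BGG resolution. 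The graded control is what makes the counting exact; your ungraded sketch does not supply a substitute for it.
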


\begin{proof}
Similarly to the proof of Corollary~\ref{cor2},
the proof of Proposition~\ref{socleI} takes care of the case $\overline{x}\neq w_0$. 

For the case $\overline{x}=w_0$, we need to generalize 
\cite[Theorem~32]{Ma}, which is proved in the regular setup, to
singular blocks. So, let $x=w_0$.
We may assume $y=\underline{y}$.
For the proof, we work in the graded category $\!^{\mathbb Z}\mathcal O$, where hom- and ext-functors are, as usual, denoted by $\hom$ and $\ext_\mathcal{O}^i$.
The precise claim in this case is 
\begin{equation}\label{DD}
    \dim\ext_{\cO}^1(L(w_0\cdot\mu),\Delta(y\cdot\mu)\langle i\rangle)=\begin{cases}\mathbf{c}(w_0y)-|I|, &\text{ if $i = \ell(w_0y)-2$};\\
    0, &\text{ otherwise.}
    \end{cases}
\end{equation}
If $\mu=0$, then, similarly to the proof of Corollary~\ref{cor2}, Formula~\eqref{DD}
reduces to \cite[Theorem 32]{Ma} using twisting functors.

In the general case, we first note that $L(w_0\cdot\mu)\cong \theta^0_\mu L_{w_0}$ 
where we now use the graded translation functor 
$\theta^0_\mu:\!^{\mathbb Z}\mathcal O_0\to \!^{\mathbb Z}\mathcal O_\mu$, 
see \cite{St}. By adjunction, this gives
\[\ext_{\cO}^1(L(w_0\cdot\mu),\Delta(y\cdot\mu)\langle i\rangle)\cong 
\ext_{\cO}^1(L_{w_0},\theta^\mu_0\Delta(y\cdot\mu)\langle i\rangle).\]
The graded module 
$\theta^\mu_0\Delta(y\cdot\mu)$
has a filtration
\begin{equation}\label{eqeq652}
0=M_{-1}\subset M_0\subset M_1\subset 
\dots\subset M_{\ell(\overline{e})}=\theta_\mu^0\Delta(y\cdot\mu), 
\end{equation}
with subquotients
\begin{displaymath}
M_i/M_{i-1}\cong \bigoplus_{z\in S_\mu:\ell(z)=i} \Delta_{yz}\langle \ell(z) \rangle,
\qquad\text{ for } i=0,1,2,\dots,\ell(\overline{e}).
\end{displaymath}
In particular, we have the short exact sequence
\begin{equation}\label{Tses}
    0\to M_0\to \theta^\mu_0\Delta(y\cdot\mu)\to N\to 0,
\end{equation}
where $M_0\cong \Delta_y$ and $N$ has an Verma flag induced from \eqref{eqeq652}. 
Consider the long exact sequence obtained by applying $\hom(L_{w_0},{}_-\langle i\rangle)$ to \eqref{Tses}, for $i\in\mathbb Z$.
By \eqref{DD} applied to $\mu=0$ and $i=\ell(w_0y)-2$, each of the spaces
$\ext_{\cO}^1(L_{w_0},\Delta_{yz}\langle \ell(w_0y)-2+\ell(z) \rangle)$ is zero and thus $\ext_{\cO}^1(L_{w_0},N\langle \ell(w_0y)-2 \rangle)=0$. 
Since $\theta^\mu_0\Delta(y\cdot\mu)$ has simple socle
and this socle lives in degree $\ell(w_0y)$, we also have
$\hom(L_{w_0},\theta^\mu_0\Delta(y\cdot\mu)\langle \ell(w_0y)-2\rangle)=0$.
Therefore, our long exact sequence gives the short exact sequence
\[0\to \hom(L_{w_0},N\langle \ell(w_0y)-2 \rangle)\to\ext_{\cO}^1(L_{w_0},\Delta_y\langle \ell(w_0y)-2 \rangle) \to\ext_{\cO}^1(L_{w_0}, \theta^\mu_0\Delta(y\cdot\mu)\langle \ell(w_0y)-2 \rangle) \to 0.\]
Using \eqref{DD}, for $\mu=0$, again, the dimension of the middle space 
in this short exact sequence is given by $\mathbf{c}(w_0y)$. 
Since the socle of $\Delta_{yz}$ in $\!^\mathbb{Z}\cO_0$ is $L_{w_0}\langle-\ell(w_0yz) \rangle$, we have 
\[\hom(L_{w_0},N\langle \ell(w_0y)-2 \rangle)= \hom(L_{w_0},\bigoplus_{s\in I}\Delta_{ys}\langle\ell(s) +\ell(w_0y)-2\rangle)=\hom(L_{w_0},\bigoplus_{s\in I}\Delta_{ys}\langle \ell(w_0ys)\rangle) =\mathbb C^{|I|}.\]
It follows that 
\[\dim\ext_{\cO}^1(L_{w_0}, \theta^\mu_0\Delta(y\cdot\mu)\langle \ell(w_0y)-2 \rangle)=\mathbf{c}(w_0y)-|I|.\]

It remains to show that, for $i\neq \ell(w_0y)-2$, we have
\[\ext_{\cO}^1(L_{w_0}, \theta^\mu_0\Delta(y\cdot\mu)\langle i \rangle)=0.\] 
For this, we consider the short exact sequence
\[0\to \theta^\mu_0\Delta(y\cdot\mu)\langle \ell(w_0y) \rangle\to I_{w_0}\to M\to 0\]
which induces
\[\hom(L_{w_0},I_{w_0}\langle -\ell(w_0y)+i\rangle )\to\hom(L_{w_0}, M\langle -\ell(w_0y)+i\rangle)\to
\ext_{\cO}^1(L_{w_0},  \theta^\mu_0\Delta(y\cdot\mu)\langle i \rangle) \to 0.\]
Thus, it remains to show that 
$\hom(L_{w_0}, M\langle j \rangle)=0$, for $j=-\ell(w_0y)+i \neq -2$. 
We denote by $K$ the cokernel of the inclusion 
$\theta^\mu_0\Delta(y\cdot\mu)\langle\ell(w_0y)\rangle \subset \theta^\mu_0\Delta(\mu)\langle\ell(w_0)\rangle$. In particular, the multiplicity of any shift of $L_{w_0}$ in
$K$ is zero. 
Then we have a canonical map $K\inj M$ by definition of $K$ and $M$. This defines a short exact sequence
\[0 \to K \to M\to Q\to 0,\]
where $Q$ has a filtration by $\Delta_{w}\langle\ell(w_0)+\ell(w) \rangle$, for each $w\in S_n\setminus S_I$. 

We claim that the socle of $Q$ comes from the socle of 
$\Delta_s\langle\ell(w_0)+1 \rangle$, for $s\in S\setminus I$. Indeed, 
The module $I_{w_0}$ is Koszul dual to the projective resolution of $L_{e}$.
The latter can be obtained from the BGG resolution of $L_{e}$, given in terms
of Verma modules, by gluing projective resolutions of Verma constituents
of the BGG resolution into a projective resolution of $L_e$. In this way, 
the Verma modules in the BGG resolution give rise to the subquotients
of the dual Verma filtration
of $I_{w_0}$. Note that any  $\Delta_x$ appears in the BGG resolution once and,
moreover, for any $y$ such that $\ell(y)=\ell(x)+1$ and $y>x$, the map
$\Delta_y\langle 1\rangle\to \Delta_x$ in the BGG resolution is non-zero, 
see \cite{BGG0}. The Koszul dual 
of this property is that the corresponding subquotient of the 
dual Verma flag of $I_{w_0}$ is
given by a nonzero morphism in the derived category and thus
by a non-split short exact sequence 
\begin{displaymath}
0\to \nabla_{w_0x^{-1}}\langle -1\rangle\to R\to \nabla_{w_0y^{-1}}\to 0. 
\end{displaymath}
Applying the simple preserving duality $\star$ on $\mathcal{O}$, we consider
the dual short exact sequence
\begin{equation}\label{eqeq987}
0\to \Delta_{w_0y^{-1}}\to R^\star\to \Delta_{w_0x^{-1}}\langle 1\rangle\to 0. 
\end{equation}
Write $w_0x^{-1}=u_1tu_2$ reduced, where $t$ is a simple reflection and $w_0y^{-1}=u_1u_2$. 
Then we can consider \eqref{eqeq987} as the image, under the
$u_2$-shuffling, of a non-split short exact sequence
\begin{equation}\label{eqeq988}
0\to \Delta_{u_1}\to R_1\to \Delta_{u_1t}\langle 1\rangle\to 0. 
\end{equation}
If $w_0=u_3u_1t$ is reduced, then, twisting \eqref{eqeq988} by $u_3$, we obtain
a non-split short exact sequence
\begin{equation}\label{eqeq989}
0\to \Delta_{w_0t}\to R_2\to \Delta_{w_0}\langle 1\rangle\to 0. 
\end{equation}
Here $\Delta_{w_0}=L_{w_0}$ and hence the fact that \eqref{eqeq989} is non-split
means that $R_2$ has simple socle. In particular, the action of the center of 
$\mathcal{O}_0$ on $R_2$ is not semi-simple.
This implies that  the action of the center of 
$\mathcal{O}_0$ on both $R_1$ and $R^\star$ is not semi-simple either and hence
they both have simple socle. Since $I_{w_0}$ is self-dual with respect to $\star$, 
this implies that the socle of $Q$ comes from its Bruhat 
minimal subquotients $\Delta_s\langle\ell(w_0)+1 \rangle$, where $s\in S\setminus I$.

Applying $\hom(L_{w_0},-)$, we obtain
\[\hom(L_{w_0},M \langle j \rangle)\hookrightarrow \hom(L_{w_0}, Q\langle j \rangle).\]
However,
\[\hom(L_{w_0}, Q\langle j \rangle)=\hom(L_{w_0}, \bigoplus_{s\in S\setminus I}\Delta_s \langle \ell(w_0)+1 +j \rangle)=0,\] for $j\neq -2$.
This completes the proof.
\end{proof}

\section{Representation theory versus Combinatorics}\label{s3}

\subsection{Tetrahedron}\label{s3.1}

Our computation for KL polynomials for $\mathcal{J}$ in Proposition~\ref{prop7}
gives a nice geometric diagram for simple subquotients of
$\Delta_e$ of the form $L_w\langle - i\rangle$, where $w\in\mathcal{J}$
and $i\in\mathbb{Z}$. By representing each simple subquotient $L_{w_{i,j}}\langle - k\rangle$ of $\Delta_e$ as the point $(i,j,k)$, we get a tetrahedron in $\mathbb{Z}^3$. In this picture, we join two points if the corresponding subquotients extend in $\Delta_e$ 
(the existence of the extension follows from the arguments in Subsection~\ref{s2.4}). 
In Figure~\ref{fig_tetra}
we show explicit examples for $n=3,4,5$ (note that, as usual in depicting positively
graded algebras, the $z$-axis is reversed in the pictures, 
so that the bottom of the picture consists of all elements of the form $sw_0$, for $s$ a simple reflection).
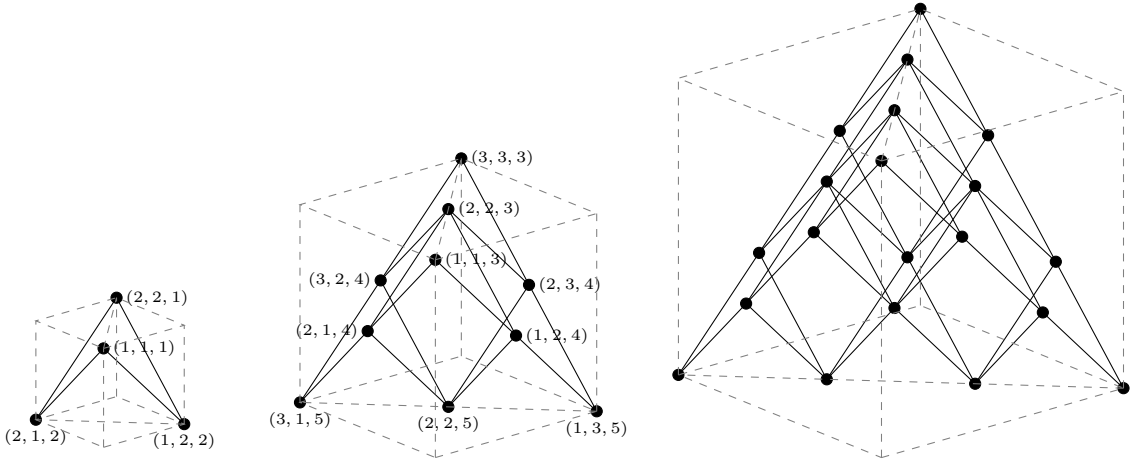
\begin{figure}[ht]
\centering
\tdplotsetmaincoords{110}{130} 
\begin{tikzpicture}[tdplot_main_coords, scale=1.4]

\filldraw[black] (1, 1, -1) circle (1.5pt) node[anchor=west] {\tiny $(1, 1, 1)$};
\filldraw[black] (1, 2, -2) circle (1.5pt) node[anchor=north] {\tiny $(1, 2, 2)$};
\filldraw[black] (2, 1, -2) circle (1.5pt) node[anchor=north] {\tiny $(2, 1, 2)$};
\filldraw[black] (2, 2, -1) circle (1.5pt) node[anchor=west] {\tiny $(2, 2, 1)$};

\draw (1, 1, -1) -- (1, 2, -2);
\draw (1, 1, -1) -- (2, 1, -2);
\draw (2, 2, -1) -- (1, 2, -2);
\draw (2, 2, -1) -- (2, 1, -2);

\draw[gray, dashed] (1, 1, -2) -- (1, 2, -2) -- (2, 2, -2) -- (2, 1, -2) -- cycle;
\draw[gray, dashed] (1, 1, -1) -- (1, 2, -1) -- (2, 2, -1) -- (2, 1, -1) -- cycle;
\draw[gray, dashed] (1, 1, -2) -- (1, 1, -1);
\draw[gray, dashed] (1, 2, -2) -- (1, 2, -1);
\draw[gray, dashed] (2, 2, -2) -- (2, 2, -1);
\draw[gray, dashed] (2, 1, -2) -- (2, 1, -1);
\draw[gray, dashed] (1, 2, -2) -- (2, 1, -2);
\draw[gray, dashed] (1, 1, -1) -- (2, 2, -1);

\end{tikzpicture}
%
\tdplotsetmaincoords{110}{130} 
\begin{tikzpicture}[tdplot_main_coords, scale=1.4]

\filldraw[black] (1, 1, -3) circle (1.5pt) node[anchor=west] {\tiny $(1, 1, 3)$};
\filldraw[black] (1, 2, -4) circle (1.5pt) node[anchor=west] {\tiny $(1, 2, 4)$};
\filldraw[black] (1, 3, -5) circle (1.5pt) node[anchor=north] {\tiny $(1, 3, 5)$};
\filldraw[black] (2, 1, -4) circle (1.5pt) node[anchor=east] {\tiny $(2, 1, 4)$};
\filldraw[black] (2, 2, -5) circle (1.5pt) node[anchor=north] {\tiny $(2, 2, 5)$};
\filldraw[black] (2, 2, -3) circle (1.5pt) node[anchor=west] {\tiny $(2, 2, 3)$};
\filldraw[black] (2, 3, -4) circle (1.5pt) node[anchor=west] {\tiny $(2, 3, 4)$};
\filldraw[black] (3, 1, -5) circle (1.5pt) node[anchor=north] {\tiny $(3, 1, 5)$};
\filldraw[black] (3, 2, -4) circle (1.5pt) node[anchor=east] {\tiny $(3, 2, 4)$};
\filldraw[black] (3, 3, -3) circle (1.5pt) node[anchor=west] {\tiny $(3, 3, 3)$};

\draw (1, 1, -3) -- (1, 2, -4);
\draw (1, 1, -3) -- (2, 1, -4);
\draw (1, 2, -4) -- (1, 3, -5);
\draw (1, 2, -4) -- (2, 2, -5);
\draw (2, 1, -4) -- (2, 2, -5);
\draw (2, 1, -4) -- (3, 1, -5);
\draw (2, 2, -3) -- (1, 2, -4);
\draw (2, 2, -3) -- (2, 1, -4);
\draw (2, 2, -3) -- (2, 3, -4);
\draw (2, 2, -3) -- (3, 2, -4);
\draw (2, 3, -4) -- (1, 3, -5);
\draw (2, 3, -4) -- (2, 2, -5);
\draw (3, 2, -4) -- (2, 2, -5);
\draw (3, 2, -4) -- (3, 1, -5);
\draw (3, 3, -3) -- (2, 3, -4);
\draw (3, 3, -3) -- (3, 2, -4);

\draw[gray, dashed] (1, 1, -5) -- (1, 3, -5) -- (3, 3, -5) -- (3, 1, -5) -- cycle;
\draw[gray, dashed] (1, 1, -3) -- (1, 3, -3) -- (3, 3, -3) -- (3, 1, -3) -- cycle;
\draw[gray, dashed] (1, 1, -5) -- (1, 1, -3);
\draw[gray, dashed] (1, 3, -5) -- (1, 3, -3);
\draw[gray, dashed] (3, 3, -5) -- (3, 3, -3);
\draw[gray, dashed] (3, 1, -5) -- (3, 1, -3);
\draw[gray, dashed] (1, 3, -5) -- (3, 1, -5);
\draw[gray, dashed] (1, 1, -3) -- (3, 3, -3);

\end{tikzpicture}
%
\tdplotsetmaincoords{110}{130} 
\begin{tikzpicture}[tdplot_main_coords, scale=1.4]

\filldraw[black] (1, 1, -6) circle (1.5pt) node[anchor=west] {};
\filldraw[black] (1, 2, -7) circle (1.5pt) node[anchor=west] {};
\filldraw[black] (1, 3, -8) circle (1.5pt) node[anchor=west] {};
\filldraw[black] (1, 4, -9) circle (1.5pt) node[anchor=west] {};
\filldraw[black] (2, 1, -7) circle (1.5pt) node[anchor=west] {};
\filldraw[black] (2, 2, -8) circle (1.5pt) node[anchor=west] {};
\filldraw[black] (2, 2, -6) circle (1.5pt) node[anchor=west] {};
\filldraw[black] (2, 3, -9) circle (1.5pt) node[anchor=west] {};
\filldraw[black] (2, 3, -7) circle (1.5pt) node[anchor=west] {};
\filldraw[black] (2, 4, -8) circle (1.5pt) node[anchor=west] {};
\filldraw[black] (3, 1, -8) circle (1.5pt) node[anchor=west] {};
\filldraw[black] (3, 2, -9) circle (1.5pt) node[anchor=west] {};
\filldraw[black] (3, 2, -7) circle (1.5pt) node[anchor=west] {};
\filldraw[black] (3, 3, -8) circle (1.5pt) node[anchor=west] {};
\filldraw[black] (3, 3, -6) circle (1.5pt) node[anchor=west] {};
\filldraw[black] (3, 4, -7) circle (1.5pt) node[anchor=west] {};
\filldraw[black] (4, 1, -9) circle (1.5pt) node[anchor=west] {};
\filldraw[black] (4, 2, -8) circle (1.5pt) node[anchor=west] {};
\filldraw[black] (4, 3, -7) circle (1.5pt) node[anchor=west] {};
\filldraw[black] (4, 4, -6) circle (1.5pt) node[anchor=west] {};

\draw (1, 1, -6) -- (1, 2, -7);
\draw (1, 1, -6) -- (2, 1, -7);
\draw (1, 2, -7) -- (1, 3, -8);
\draw (1, 2, -7) -- (2, 2, -8);
\draw (1, 3, -8) -- (1, 4, -9);
\draw (1, 3, -8) -- (2, 3, -9);
\draw (2, 1, -7) -- (2, 2, -8);
\draw (2, 1, -7) -- (3, 1, -8);
\draw (2, 2, -8) -- (2, 3, -9);
\draw (2, 2, -8) -- (3, 2, -9);
\draw (2, 2, -6) -- (1, 2, -7);
\draw (2, 2, -6) -- (2, 1, -7);
\draw (2, 2, -6) -- (2, 3, -7);
\draw (2, 2, -6) -- (3, 2, -7);
\draw (2, 3, -7) -- (1, 3, -8);
\draw (2, 3, -7) -- (2, 2, -8);
\draw (2, 3, -7) -- (2, 4, -8);
\draw (2, 3, -7) -- (3, 3, -8);
\draw (2, 4, -8) -- (1, 4, -9);
\draw (2, 4, -8) -- (2, 3, -9);
\draw (3, 1, -8) -- (3, 2, -9);
\draw (3, 1, -8) -- (4, 1, -9);
\draw (3, 2, -7) -- (2, 2, -8);
\draw (3, 2, -7) -- (3, 1, -8);
\draw (3, 2, -7) -- (3, 3, -8);
\draw (3, 2, -7) -- (4, 2, -8);
\draw (3, 3, -8) -- (2, 3, -9);
\draw (3, 3, -8) -- (3, 2, -9);
\draw (3, 3, -6) -- (2, 3, -7);
\draw (3, 3, -6) -- (3, 2, -7);
\draw (3, 3, -6) -- (3, 4, -7);
\draw (3, 3, -6) -- (4, 3, -7);
\draw (3, 4, -7) -- (2, 4, -8);
\draw (3, 4, -7) -- (3, 3, -8);
\draw (4, 2, -8) -- (3, 2, -9);
\draw (4, 2, -8) -- (4, 1, -9);
\draw (4, 3, -7) -- (3, 3, -8);
\draw (4, 3, -7) -- (4, 2, -8);
\draw (4, 4, -6) -- (3, 4, -7);
\draw (4, 4, -6) -- (4, 3, -7);

\draw[gray, dashed] (1, 1, -9) -- (1, 4, -9) -- (4, 4, -9) -- (4, 1, -9) -- cycle;
\draw[gray, dashed] (1, 1, -6) -- (1, 4, -6) -- (4, 4, -6) -- (4, 1, -6) -- cycle;
\draw[gray, dashed] (1, 1, -9) -- (1, 1, -6);
\draw[gray, dashed] (1, 4, -9) -- (1, 4, -6);
\draw[gray, dashed] (4, 4, -9) -- (4, 4, -6);
\draw[gray, dashed] (4, 1, -9) -- (4, 1, -6);
\draw[gray, dashed] (1, 4, -9) -- (4, 1, -9);
\draw[gray, dashed] (1, 1, -6) -- (4, 4, -6);

\end{tikzpicture}
\caption{The tetrahedron for $n= 3,4,5$, respectively.}
\label{fig_tetra}
\end{figure}

Via the bijection given by Theorem~\ref{thm1}\eqref{thm1.2},
this gives the Hasse diagram for $(\mathbf{B}_n,\leq)$, cf. \cite[Figure 1 and 2]{EH}.

In Figure~\ref{fig_socles4} and Figure \ref{fig_socles5}, we give some examples of 
composition factors of the form $L_x$, where $x\in \mathcal{J}$,
in the modules $\Delta_e/\Delta_w$, for $n=4$ and $5$. 
The composition factors of this form in $\Delta_e/\Delta_w$ which 
are not in the socle are given by the black points in the figures  
and the composition factors in the socle are given by the 
red points in the figures.
The white points correspond to the composition factors in $\Delta_w$
(and hence outside of $\Delta_e/\Delta_w$). 
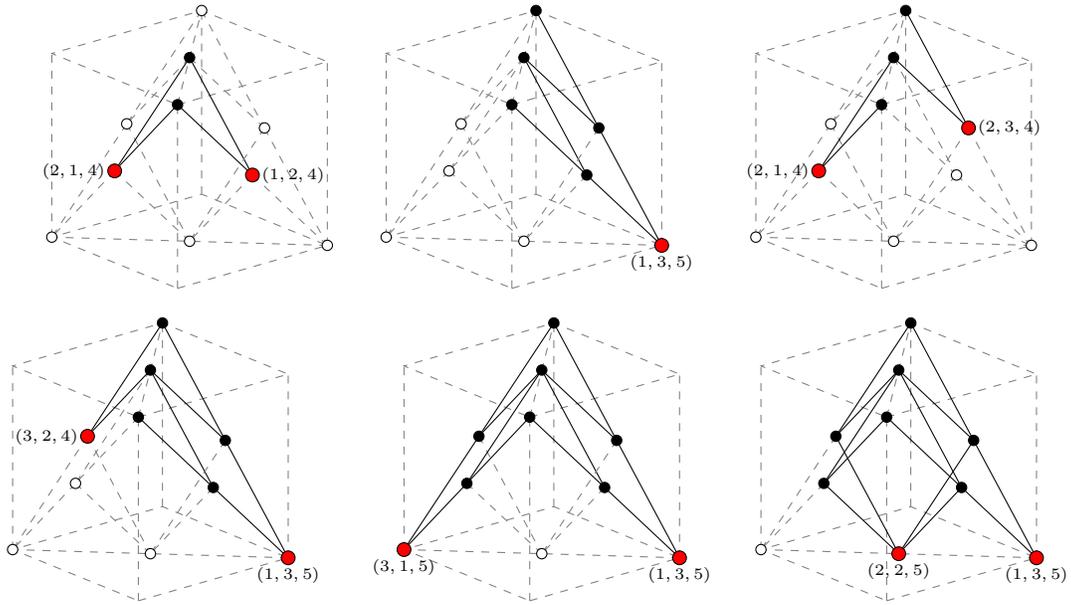
\begin{figure}[ht]
\centering
\tdplotsetmaincoords{110}{130} 
\begin{tikzpicture}[tdplot_main_coords, scale=1.3]

\draw[gray, dashed] (1, 1, -5) -- (1, 3, -5) -- (3, 3, -5) -- (3, 1, -5) -- cycle;
\draw[gray, dashed] (1, 1, -3) -- (1, 3, -3) -- (3, 3, -3) -- (3, 1, -3) -- cycle;
\draw[gray, dashed] (1, 1, -5) -- (1, 1, -3);
\draw[gray, dashed] (1, 3, -5) -- (1, 3, -3);
\draw[gray, dashed] (3, 3, -5) -- (3, 3, -3);
\draw[gray, dashed] (3, 1, -5) -- (3, 1, -3);
\draw[gray, dashed] (1, 3, -5) -- (3, 1, -5);
\draw[gray, dashed] (1, 1, -3) -- (3, 3, -3);

\draw (1, 1, -3) -- (2, 1, -4);
\draw (1, 1, -3) -- (1, 2, -4);
\draw[gray, dashed] (2, 1, -4) -- (3, 1, -5);
\draw[gray, dashed] (2, 1, -4) -- (2, 2, -5);
\draw[gray, dashed] (1, 2, -4) -- (2, 2, -5);
\draw[gray, dashed] (1, 2, -4) -- (1, 3, -5);
\draw[gray, dashed] (3, 2, -4) -- (3, 1, -5);
\draw[gray, dashed] (3, 2, -4) -- (2, 2, -5);
\draw (2, 2, -3) -- (2, 1, -4);
\draw (2, 2, -3) -- (1, 2, -4);
\draw[gray, dashed] (2, 2, -3) -- (3, 2, -4);
\draw[gray, dashed] (2, 2, -3) -- (2, 3, -4);
\draw[gray, dashed] (3, 3, -3) -- (3, 2, -4);
\draw[gray, dashed] (3, 3, -3) -- (2, 3, -4);
\draw[gray, dashed] (2, 3, -4) -- (2, 2, -5);
\draw[gray, dashed] (2, 3, -4) -- (1, 3, -5);

\filldraw[black] (1, 1, -3) circle (1.5pt) node[anchor=west] {};
\draw[fill=red] (2, 1, -4) circle (2.0pt) node[anchor=east] {\tiny $(2, 1, 4)$};
\draw[fill=white] (3, 1, -5) circle (1.5pt) node[anchor=west] {};
\draw[fill=red] (1, 2, -4) circle (2.0pt) node[anchor=west] {\tiny $(1, 2, 4)$};
\draw[fill=white] (3, 2, -4) circle (1.5pt) node[anchor=west] {};
\draw[fill=white] (2, 2, -5) circle (1.5pt) node[anchor=west] {};
\filldraw[black] (2, 2, -3) circle (1.5pt) node[anchor=west] {};
\draw[fill=white] (3, 3, -3) circle (1.5pt) node[anchor=west] {};
\draw[fill=white] (2, 3, -4) circle (1.5pt) node[anchor=west] {};
\draw[fill=white] (1, 3, -5) circle (1.5pt) node[anchor=west] {};

\end{tikzpicture}
\tdplotsetmaincoords{110}{130} 
\begin{tikzpicture}[tdplot_main_coords, scale=1.3]

\draw[gray, dashed] (1, 1, -5) -- (1, 3, -5) -- (3, 3, -5) -- (3, 1, -5) -- cycle;
\draw[gray, dashed] (1, 1, -3) -- (1, 3, -3) -- (3, 3, -3) -- (3, 1, -3) -- cycle;
\draw[gray, dashed] (1, 1, -5) -- (1, 1, -3);
\draw[gray, dashed] (1, 3, -5) -- (1, 3, -3);
\draw[gray, dashed] (3, 3, -5) -- (3, 3, -3);
\draw[gray, dashed] (3, 1, -5) -- (3, 1, -3);
\draw[gray, dashed] (1, 3, -5) -- (3, 1, -5);
\draw[gray, dashed] (1, 1, -3) -- (3, 3, -3);

\draw[gray, dashed] (1, 1, -3) -- (2, 1, -4);
\draw (1, 1, -3) -- (1, 2, -4);
\draw[gray, dashed] (2, 1, -4) -- (3, 1, -5);
\draw[gray, dashed] (2, 1, -4) -- (2, 2, -5);
\draw[gray, dashed] (1, 2, -4) -- (2, 2, -5);
\draw (1, 2, -4) -- (1, 3, -5);
\draw[gray, dashed] (3, 2, -4) -- (3, 1, -5);
\draw[gray, dashed] (3, 2, -4) -- (2, 2, -5);
\draw[gray, dashed] (2, 2, -3) -- (2, 1, -4);
\draw (2, 2, -3) -- (1, 2, -4);
\draw[gray, dashed] (2, 2, -3) -- (3, 2, -4);
\draw (2, 2, -3) -- (2, 3, -4);
\draw[gray, dashed] (3, 3, -3) -- (3, 2, -4);
\draw (3, 3, -3) -- (2, 3, -4);
\draw[gray, dashed] (2, 3, -4) -- (2, 2, -5);
\draw (2, 3, -4) -- (1, 3, -5);

\filldraw[black] (1, 1, -3) circle (1.5pt) node[anchor=west] {};
\draw[fill=white] (2, 1, -4) circle (1.5pt) node[anchor=west] {};
\draw[fill=white] (3, 1, -5) circle (1.5pt) node[anchor=west] {};
\filldraw[black] (1, 2, -4) circle (1.5pt) node[anchor=west] {};
\draw[fill=white] (3, 2, -4) circle (1.5pt) node[anchor=west] {};
\draw[fill=white] (2, 2, -5) circle (1.5pt) node[anchor=west] {};
\filldraw[black] (2, 2, -3) circle (1.5pt) node[anchor=west] {};
\filldraw[black] (3, 3, -3) circle (1.5pt) node[anchor=west] {};
\filldraw[black] (2, 3, -4) circle (1.5pt) node[anchor=west] {};
\draw[fill=red] (1, 3, -5) circle (2.0pt) node[anchor=north] {\tiny $(1, 3, 5)$};

\end{tikzpicture}
\tdplotsetmaincoords{110}{130} 
\begin{tikzpicture}[tdplot_main_coords, scale=1.3]

\draw[gray, dashed] (1, 1, -5) -- (1, 3, -5) -- (3, 3, -5) -- (3, 1, -5) -- cycle;
\draw[gray, dashed] (1, 1, -3) -- (1, 3, -3) -- (3, 3, -3) -- (3, 1, -3) -- cycle;
\draw[gray, dashed] (1, 1, -5) -- (1, 1, -3);
\draw[gray, dashed] (1, 3, -5) -- (1, 3, -3);
\draw[gray, dashed] (3, 3, -5) -- (3, 3, -3);
\draw[gray, dashed] (3, 1, -5) -- (3, 1, -3);
\draw[gray, dashed] (1, 3, -5) -- (3, 1, -5);
\draw[gray, dashed] (1, 1, -3) -- (3, 3, -3);

\draw (1, 1, -3) -- (2, 1, -4);
\draw[gray, dashed] (1, 1, -3) -- (1, 2, -4);
\draw[gray, dashed] (2, 1, -4) -- (3, 1, -5);
\draw[gray, dashed] (2, 1, -4) -- (2, 2, -5);
\draw[gray, dashed] (1, 2, -4) -- (2, 2, -5);
\draw[gray, dashed] (1, 2, -4) -- (1, 3, -5);
\draw[gray, dashed] (3, 2, -4) -- (3, 1, -5);
\draw[gray, dashed] (3, 2, -4) -- (2, 2, -5);
\draw (2, 2, -3) -- (2, 1, -4);
\draw[gray, dashed] (2, 2, -3) -- (1, 2, -4);
\draw[gray, dashed] (2, 2, -3) -- (3, 2, -4);
\draw (2, 2, -3) -- (2, 3, -4);
\draw[gray, dashed] (3, 3, -3) -- (3, 2, -4);
\draw (3, 3, -3) -- (2, 3, -4);
\draw[gray, dashed] (2, 3, -4) -- (2, 2, -5);
\draw[gray, dashed] (2, 3, -4) -- (1, 3, -5);

\filldraw[black] (1, 1, -3) circle (1.5pt) node[anchor=west] {};
\draw[fill=red] (2, 1, -4) circle (2.0pt) node[anchor=east] {\tiny $(2, 1, 4)$};
\draw[fill=white] (3, 1, -5) circle (1.5pt) node[anchor=west] {};
\draw[fill=white] (1, 2, -4) circle (1.5pt) node[anchor=west] {};
\draw[fill=white] (3, 2, -4) circle (1.5pt) node[anchor=west] {};
\draw[fill=white] (2, 2, -5) circle (1.5pt) node[anchor=west] {};
\filldraw[black] (2, 2, -3) circle (1.5pt) node[anchor=west] {};
\filldraw[black] (3, 3, -3) circle (1.5pt) node[anchor=west] {};
\draw[fill=red] (2, 3, -4) circle (2.0pt) node[anchor=west] {\tiny $(2, 3, 4)$};
\draw[fill=white] (1, 3, -5) circle (1.5pt) node[anchor=west] {};

\end{tikzpicture}
\vskip 0.3cm
\tdplotsetmaincoords{110}{130} 
\begin{tikzpicture}[tdplot_main_coords, scale=1.3]

\draw[gray, dashed] (1, 1, -5) -- (1, 3, -5) -- (3, 3, -5) -- (3, 1, -5) -- cycle;
\draw[gray, dashed] (1, 1, -3) -- (1, 3, -3) -- (3, 3, -3) -- (3, 1, -3) -- cycle;
\draw[gray, dashed] (1, 1, -5) -- (1, 1, -3);
\draw[gray, dashed] (1, 3, -5) -- (1, 3, -3);
\draw[gray, dashed] (3, 3, -5) -- (3, 3, -3);
\draw[gray, dashed] (3, 1, -5) -- (3, 1, -3);
\draw[gray, dashed] (1, 3, -5) -- (3, 1, -5);
\draw[gray, dashed] (1, 1, -3) -- (3, 3, -3);

\draw[gray, dashed] (1, 1, -3) -- (2, 1, -4);
\draw (1, 1, -3) -- (1, 2, -4);
\draw[gray, dashed] (2, 1, -4) -- (3, 1, -5);
\draw[gray, dashed] (2, 1, -4) -- (2, 2, -5);
\draw[gray, dashed] (1, 2, -4) -- (2, 2, -5);
\draw (1, 2, -4) -- (1, 3, -5);
\draw[gray, dashed] (3, 2, -4) -- (3, 1, -5);
\draw[gray, dashed] (3, 2, -4) -- (2, 2, -5);
\draw[gray, dashed] (2, 2, -3) -- (2, 1, -4);
\draw (2, 2, -3) -- (1, 2, -4);
\draw (2, 2, -3) -- (3, 2, -4);
\draw (2, 2, -3) -- (2, 3, -4);
\draw (3, 3, -3) -- (3, 2, -4);
\draw (3, 3, -3) -- (2, 3, -4);
\draw[gray, dashed] (2, 3, -4) -- (2, 2, -5);
\draw (2, 3, -4) -- (1, 3, -5);

\filldraw[black] (1, 1, -3) circle (1.5pt) node[anchor=west] {};
\draw[fill=white] (2, 1, -4) circle (1.5pt) node[anchor=west] {};
\draw[fill=white] (3, 1, -5) circle (1.5pt) node[anchor=west] {};
\filldraw[black] (1, 2, -4) circle (1.5pt) node[anchor=west] {};
\draw[fill=red] (3, 2, -4) circle (2.0pt) node[anchor=east] {\tiny $(3, 2, 4)$};
\draw[fill=white] (2, 2, -5) circle (1.5pt) node[anchor=west] {};
\filldraw[black] (2, 2, -3) circle (1.5pt) node[anchor=west] {};
\filldraw[black] (3, 3, -3) circle (1.5pt) node[anchor=west] {};
\filldraw[black] (2, 3, -4) circle (1.5pt) node[anchor=west] {};
\draw[fill=red] (1, 3, -5) circle (2.0pt) node[anchor=north] {\tiny $(1, 3, 5)$};

\end{tikzpicture}
\tdplotsetmaincoords{110}{130} 
\begin{tikzpicture}[tdplot_main_coords, scale=1.3]

\draw[gray, dashed] (1, 1, -5) -- (1, 3, -5) -- (3, 3, -5) -- (3, 1, -5) -- cycle;
\draw[gray, dashed] (1, 1, -3) -- (1, 3, -3) -- (3, 3, -3) -- (3, 1, -3) -- cycle;
\draw[gray, dashed] (1, 1, -5) -- (1, 1, -3);
\draw[gray, dashed] (1, 3, -5) -- (1, 3, -3);
\draw[gray, dashed] (3, 3, -5) -- (3, 3, -3);
\draw[gray, dashed] (3, 1, -5) -- (3, 1, -3);
\draw[gray, dashed] (1, 3, -5) -- (3, 1, -5);
\draw[gray, dashed] (1, 1, -3) -- (3, 3, -3);

\draw (1, 1, -3) -- (2, 1, -4);
\draw (1, 1, -3) -- (1, 2, -4);
\draw (2, 1, -4) -- (3, 1, -5);
\draw[gray, dashed] (2, 1, -4) -- (2, 2, -5);
\draw[gray, dashed] (1, 2, -4) -- (2, 2, -5);
\draw (1, 2, -4) -- (1, 3, -5);
\draw (3, 2, -4) -- (3, 1, -5);
\draw[gray, dashed] (3, 2, -4) -- (2, 2, -5);
\draw (2, 2, -3) -- (2, 1, -4);
\draw (2, 2, -3) -- (1, 2, -4);
\draw (2, 2, -3) -- (3, 2, -4);
\draw (2, 2, -3) -- (2, 3, -4);
\draw (3, 3, -3) -- (3, 2, -4);
\draw (3, 3, -3) -- (2, 3, -4);
\draw[gray, dashed] (2, 3, -4) -- (2, 2, -5);
\draw (2, 3, -4) -- (1, 3, -5);

\filldraw[black] (1, 1, -3) circle (1.5pt) node[anchor=west] {};
\filldraw[black] (2, 1, -4) circle (1.5pt) node[anchor=west] {};
\draw[fill=red] (3, 1, -5) circle (2.0pt) node[anchor=north] {\tiny $(3, 1, 5)$};
\filldraw[black] (1, 2, -4) circle (1.5pt) node[anchor=west] {};
\filldraw[black] (3, 2, -4) circle (1.5pt) node[anchor=west] {};
\draw[fill=white] (2, 2, -5) circle (1.5pt) node[anchor=west] {};
\filldraw[black] (2, 2, -3) circle (1.5pt) node[anchor=west] {};
\filldraw[black] (3, 3, -3) circle (1.5pt) node[anchor=west] {};
\filldraw[black] (2, 3, -4) circle (1.5pt) node[anchor=west] {};
\draw[fill=red] (1, 3, -5) circle (2.0pt) node[anchor=north] {\tiny $(1, 3, 5)$};

\end{tikzpicture}
\tdplotsetmaincoords{110}{130} 
\begin{tikzpicture}[tdplot_main_coords, scale=1.3]

\draw[gray, dashed] (1, 1, -5) -- (1, 3, -5) -- (3, 3, -5) -- (3, 1, -5) -- cycle;
\draw[gray, dashed] (1, 1, -3) -- (1, 3, -3) -- (3, 3, -3) -- (3, 1, -3) -- cycle;
\draw[gray, dashed] (1, 1, -5) -- (1, 1, -3);
\draw[gray, dashed] (1, 3, -5) -- (1, 3, -3);
\draw[gray, dashed] (3, 3, -5) -- (3, 3, -3);
\draw[gray, dashed] (3, 1, -5) -- (3, 1, -3);
\draw[gray, dashed] (1, 3, -5) -- (3, 1, -5);
\draw[gray, dashed] (1, 1, -3) -- (3, 3, -3);

\draw (1, 1, -3) -- (2, 1, -4);
\draw (1, 1, -3) -- (1, 2, -4);
\draw[gray, dashed] (2, 1, -4) -- (3, 1, -5);
\draw (2, 1, -4) -- (2, 2, -5);
\draw (1, 2, -4) -- (2, 2, -5);
\draw (1, 2, -4) -- (1, 3, -5);
\draw[gray, dashed] (3, 2, -4) -- (3, 1, -5);
\draw (3, 2, -4) -- (2, 2, -5);
\draw (2, 2, -3) -- (2, 1, -4);
\draw (2, 2, -3) -- (1, 2, -4);
\draw (2, 2, -3) -- (3, 2, -4);
\draw (2, 2, -3) -- (2, 3, -4);
\draw (3, 3, -3) -- (3, 2, -4);
\draw (3, 3, -3) -- (2, 3, -4);
\draw (2, 3, -4) -- (2, 2, -5);
\draw (2, 3, -4) -- (1, 3, -5);

\filldraw[black] (1, 1, -3) circle (1.5pt) node[anchor=west] {};
\filldraw[black] (2, 1, -4) circle (1.5pt) node[anchor=west] {};
\draw[fill=white] (3, 1, -5) circle (1.5pt) node[anchor=west] {};
\filldraw[black] (1, 2, -4) circle (1.5pt) node[anchor=west] {};
\filldraw[black] (3, 2, -4) circle (1.5pt) node[anchor=west] {};
\draw[fill=red] (2, 2, -5) circle (2.0pt) node[anchor=north] {\tiny $(2, 2, 5)$};
\filldraw[black] (2, 2, -3) circle (1.5pt) node[anchor=west] {};
\filldraw[black] (3, 3, -3) circle (1.5pt) node[anchor=west] {};
\filldraw[black] (2, 3, -4) circle (1.5pt) node[anchor=west] {};
\draw[fill=red] (1, 3, -5) circle (2.0pt) node[anchor=north] {\tiny $(1, 3, 5)$};

\end{tikzpicture}
\caption{Socles of the quotients $\Delta_e/\Delta_w$ for $n=4$, for $w=s_1s_2s_1, s_1s_2s_3, s_2s_3s_1$, respectively, in the first row, and $w=s_1s_2s_3s_2, s_1s_2s_3s_2s_1, s_1s_2s_3s_1s_2$, respectively, in the second row.}
\label{fig_socles4}
\end{figure}

\begin{figure}[ht]
\centering
\tdplotsetmaincoords{110}{130} 
\begin{tikzpicture}[tdplot_main_coords, scale=1.3]

\draw[gray, dashed] (1, 1, -9) -- (1, 4, -9) -- (4, 4, -9) -- (4, 1, -9) -- cycle;
\draw[gray, dashed] (1, 1, -6) -- (1, 4, -6) -- (4, 4, -6) -- (4, 1, -6) -- cycle;
\draw[gray, dashed] (1, 1, -9) -- (1, 1, -6);
\draw[gray, dashed] (1, 4, -9) -- (1, 4, -6);
\draw[gray, dashed] (4, 4, -9) -- (4, 4, -6);
\draw[gray, dashed] (4, 1, -9) -- (4, 1, -6);
\draw[gray, dashed] (1, 4, -9) -- (4, 1, -9);
\draw[gray, dashed] (1, 1, -6) -- (4, 4, -6);

\draw (4, 4, -6) -- (4, 3, -7);
\draw (4, 4, -6) -- (3, 4, -7);
\draw (4, 3, -7) -- (4, 2, -8);
\draw (4, 3, -7) -- (3, 3, -8);
\draw (4, 2, -8) -- (4, 1, -9);
\draw[gray, dashed] (4, 2, -8) -- (3, 2, -9);
\draw[gray, dashed] (3, 3, -8) -- (3, 2, -9);
\draw[gray, dashed] (3, 3, -8) -- (2, 3, -9);
\draw (3, 3, -6) -- (4, 3, -7);
\draw (3, 3, -6) -- (3, 4, -7);
\draw (3, 3, -6) -- (3, 2, -7);
\draw (3, 3, -6) -- (2, 3, -7);
\draw (3, 4, -7) -- (3, 3, -8);
\draw[gray, dashed] (3, 4, -7) -- (2, 4, -8);
\draw (3, 1, -8) -- (4, 1, -9);
\draw[gray, dashed] (3, 1, -8) -- (3, 2, -9);
\draw (3, 2, -7) -- (4, 2, -8);
\draw (3, 2, -7) -- (3, 3, -8);
\draw (3, 2, -7) -- (3, 1, -8);
\draw[gray, dashed] (3, 2, -7) -- (2, 2, -8);
\draw (1, 1, -6) -- (1, 2, -7);
\draw (1, 1, -6) -- (2, 1, -7);
\draw (1, 2, -7) -- (1, 3, -8);
\draw[gray, dashed] (1, 2, -7) -- (2, 2, -8);
\draw[gray, dashed] (1, 3, -8) -- (1, 4, -9);
\draw[gray, dashed] (1, 3, -8) -- (2, 3, -9);
\draw (2, 1, -7) -- (3, 1, -8);
\draw[gray, dashed] (2, 1, -7) -- (2, 2, -8);
\draw (2, 3, -7) -- (3, 3, -8);
\draw (2, 3, -7) -- (1, 3, -8);
\draw[gray, dashed] (2, 3, -7) -- (2, 2, -8);
\draw[gray, dashed] (2, 3, -7) -- (2, 4, -8);
\draw[gray, dashed] (2, 2, -8) -- (3, 2, -9);
\draw[gray, dashed] (2, 2, -8) -- (2, 3, -9);
\draw (2, 2, -6) -- (3, 2, -7);
\draw (2, 2, -6) -- (1, 2, -7);
\draw (2, 2, -6) -- (2, 1, -7);
\draw (2, 2, -6) -- (2, 3, -7);
\draw[gray, dashed] (2, 4, -8) -- (1, 4, -9);
\draw[gray, dashed] (2, 4, -8) -- (2, 3, -9);

\filldraw[black] (4, 4, -6) circle (1.5pt) node[anchor=north] {};
\filldraw[black] (4, 3, -7) circle (1.5pt) node[anchor=north] {};
\draw[fill=red] (4, 1, -9) circle (2.0pt) node[anchor=north] {\tiny $(4, 1, 9)$};
\filldraw[black] (4, 2, -8) circle (1.5pt) node[anchor=north] {};
\draw[fill=red] (3, 3, -8) circle (2.0pt) node[anchor=north] {\tiny $(3, 3, 8)$};
\filldraw[black] (3, 3, -6) circle (1.5pt) node[anchor=north] {};
\filldraw[black] (3, 4, -7) circle (1.5pt) node[anchor=north] {};
\filldraw[black] (3, 1, -8) circle (1.5pt) node[anchor=north] {};
\draw[fill=white] (3, 2, -9) circle (1.5pt) node[anchor=north] {};
\filldraw[black] (3, 2, -7) circle (1.5pt) node[anchor=north] {};
\filldraw[black] (1, 1, -6) circle (1.5pt) node[anchor=north] {};
\filldraw[black] (1, 2, -7) circle (1.5pt) node[anchor=north] {};
\draw[fill=white] (1, 4, -9) circle (1.5pt) node[anchor=north] {};
\draw[fill=red] (1, 3, -8) circle (2.0pt) node[anchor=north] {\tiny $(1, 3, 8)$};
\filldraw[black] (2, 1, -7) circle (1.5pt) node[anchor=north] {};
\draw[fill=white] (2, 3, -9) circle (1.5pt) node[anchor=north] {};
\filldraw[black] (2, 3, -7) circle (1.5pt) node[anchor=north] {};
\draw[fill=white] (2, 2, -8) circle (1.5pt) node[anchor=north] {};
\filldraw[black] (2, 2, -6) circle (1.5pt) node[anchor=north] {};
\draw[fill=white] (2, 4, -8) circle (1.5pt) node[anchor=north] {};

\end{tikzpicture}
\hskip 1cm
\tdplotsetmaincoords{110}{130} 
\begin{tikzpicture}[tdplot_main_coords, scale=1.3]

\draw[gray, dashed] (1, 1, -9) -- (1, 4, -9) -- (4, 4, -9) -- (4, 1, -9) -- cycle;
\draw[gray, dashed] (1, 1, -6) -- (1, 4, -6) -- (4, 4, -6) -- (4, 1, -6) -- cycle;
\draw[gray, dashed] (1, 1, -9) -- (1, 1, -6);
\draw[gray, dashed] (1, 4, -9) -- (1, 4, -6);
\draw[gray, dashed] (4, 4, -9) -- (4, 4, -6);
\draw[gray, dashed] (4, 1, -9) -- (4, 1, -6);
\draw[gray, dashed] (1, 4, -9) -- (4, 1, -9);
\draw[gray, dashed] (1, 1, -6) -- (4, 4, -6);

\draw (4, 4, -6) -- (4, 3, -7);
\draw (4, 4, -6) -- (3, 4, -7);
\draw (4, 3, -7) -- (4, 2, -8);
\draw (4, 3, -7) -- (3, 3, -8);
\draw[gray, dashed] (4, 2, -8) -- (4, 1, -9);
\draw (4, 2, -8) -- (3, 2, -9);
\draw (3, 3, -8) -- (3, 2, -9);
\draw[gray, dashed] (3, 3, -8) -- (2, 3, -9);
\draw (3, 3, -6) -- (4, 3, -7);
\draw (3, 3, -6) -- (3, 4, -7);
\draw (3, 3, -6) -- (3, 2, -7);
\draw (3, 3, -6) -- (2, 3, -7);
\draw (3, 4, -7) -- (3, 3, -8);
\draw (3, 4, -7) -- (2, 4, -8);
\draw[gray, dashed] (3, 1, -8) -- (4, 1, -9);
\draw (3, 1, -8) -- (3, 2, -9);
\draw (3, 2, -7) -- (4, 2, -8);
\draw (3, 2, -7) -- (3, 3, -8);
\draw (3, 2, -7) -- (3, 1, -8);
\draw (3, 2, -7) -- (2, 2, -8);
\draw (1, 1, -6) -- (1, 2, -7);
\draw (1, 1, -6) -- (2, 1, -7);
\draw (1, 2, -7) -- (1, 3, -8);
\draw (1, 2, -7) -- (2, 2, -8);
\draw (1, 3, -8) -- (1, 4, -9);
\draw[gray, dashed] (1, 3, -8) -- (2, 3, -9);
\draw (2, 1, -7) -- (3, 1, -8);
\draw (2, 1, -7) -- (2, 2, -8);
\draw (2, 3, -7) -- (3, 3, -8);
\draw (2, 3, -7) -- (1, 3, -8);
\draw (2, 3, -7) -- (2, 2, -8);
\draw (2, 3, -7) -- (2, 4, -8);
\draw (2, 2, -8) -- (3, 2, -9);
\draw[gray, dashed] (2, 2, -8) -- (2, 3, -9);
\draw (2, 2, -6) -- (3, 2, -7);
\draw (2, 2, -6) -- (1, 2, -7);
\draw (2, 2, -6) -- (2, 1, -7);
\draw (2, 2, -6) -- (2, 3, -7);
\draw (2, 4, -8) -- (1, 4, -9);
\draw[gray, dashed] (2, 4, -8) -- (2, 3, -9);

\filldraw[black] (4, 4, -6) circle (1.5pt) node[anchor=north] {};
\filldraw[black] (4, 3, -7) circle (1.5pt) node[anchor=north] {};
\draw[fill=white] (4, 1, -9) circle (1.5pt) node[anchor=north] {};
\filldraw[black] (4, 2, -8) circle (1.5pt) node[anchor=north] {};
\filldraw[black] (3, 3, -8) circle (1.5pt) node[anchor=north] {};
\filldraw[black] (3, 3, -6) circle (1.5pt) node[anchor=north] {};
\filldraw[black] (3, 4, -7) circle (1.5pt) node[anchor=north] {};
\filldraw[black] (3, 1, -8) circle (1.5pt) node[anchor=north] {};
\draw[fill=red] (3, 2, -9) circle (2.0pt) node[anchor=north] {\tiny $(3, 2, 9)$};
\filldraw[black] (3, 2, -7) circle (1.5pt) node[anchor=north] {};
\filldraw[black] (1, 1, -6) circle (1.5pt) node[anchor=north] {};
\filldraw[black] (1, 2, -7) circle (1.5pt) node[anchor=north] {};
\draw[fill=red] (1, 4, -9) circle (2.0pt) node[anchor=north] {\tiny $(1, 4, 9)$};
\filldraw[black] (1, 3, -8) circle (1.5pt) node[anchor=north] {};
\filldraw[black] (2, 1, -7) circle (1.5pt) node[anchor=north] {};
\draw[fill=white] (2, 3, -9) circle (1.5pt) node[anchor=north] {};
\filldraw[black] (2, 3, -7) circle (1.5pt) node[anchor=north] {};
\filldraw[black] (2, 2, -8) circle (1.5pt) node[anchor=north] {};
\filldraw[black] (2, 2, -6) circle (1.5pt) node[anchor=north] {};
\filldraw[black] (2, 4, -8) circle (1.5pt) node[anchor=north] {};

\end{tikzpicture}
\caption{Socles of the quotients $\Delta_e/\Delta_w$ for $n=5$, for $w=s_3s_4s_1s_2s_3s_2s_1$ and $w=s_1s_2s_3s_4s_2s_3s_1s_2$, respectively.}
\label{fig_socles5}
\end{figure}
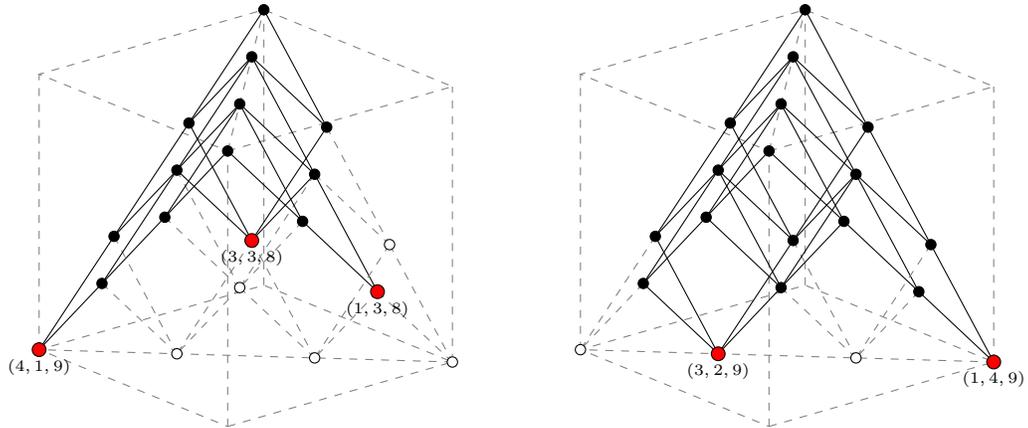

For convenience of the reader, on Figure \ref{fig_J4} and Figure \ref{fig_J5} we present the penultimate cell, for $n=4$ and $n=5$, respectively. In the figures, an element on the position $(i,j)$ has $i$ as the unique left ascend, and $j$ as the unique right ascend.

\begin{figure}[ht]
\centering
\begin{tabular}{c|ccc}
 & \small{$1$} & \small{$2$} & \small{$3$}\\ \hline
\small{$1$} & 
{$s_2s_3s_2$} & {$s_2s_3s_2s_1$} & {$s_2s_3s_1s_2s_1$}\\
\small{$2$} & 
{$s_1s_2s_3s_2$} & {$s_1s_2s_3s_2s_1$} & {$s_3s_1s_2s_1$}\\
\small{$3$} & 
{$s_1s_2s_3s_1s_2$} & {$s_1s_2s_3s_1$} & {$s_1s_2s_1$}\\
\end{tabular}
\caption{The penultimate cell for $n=4$.}
\label{fig_J4}

\bigskip

\begin{tabular}{c|cccc}
 & \small{$1$} & \small{$2$} & \small{$3$} & \small{$4$}\\ \hline
\small{$1$} & 
{$s_2s_3s_4s_2s_3s_2$} & {$s_2s_3s_4s_2s_3s_2s_1$} & {$s_2s_3s_4s_2s_3s_1s_2s_1$} & {$s_2s_3s_4s_1s_2s_3s_1s_2s_1$}\\
\small{$2$} & 
{$s_1s_2s_3s_4s_2s_3s_2$} & {$s_1s_2s_3s_4s_2s_3s_2s_1$} & {$s_1s_2s_3s_4s_2s_3s_1s_2s_1$} & {$s_3s_4s_1s_2s_3s_1s_2s_1$}\\
\small{$3$} & 
{$s_1s_2s_3s_4s_1s_2s_3s_2$} & {$s_1s_2s_3s_4s_1s_2s_3s_2s_1$} & {$s_1s_2s_3s_4s_3s_1s_2s_1$} & {$s_4s_1s_2s_3s_1s_2s_1$}\\
\small{$4$} & 
{$s_1s_2s_3s_4s_1s_2s_3s_1s_2$} & {$s_1s_2s_3s_4s_1s_2s_3s_1$} & {$s_1s_2s_3s_4s_1s_2s_1$} & {$s_1s_2s_3s_1s_2s_1$}\\
\end{tabular}
\caption{The penultimate cell for $n=5$.}
\label{fig_J5}
\end{figure}

\subsection{Explicit formulae}\label{subs_explicit}\label{s3.2}

The set $\mathbf{B}_n$ can be explicitly parameterized by triples of integers 
\begin{displaymath}
(i,j,k)\quad \text{ such that } \quad1 \leq i, j \leq n-1\quad
\text{ and }\quad0 \leq k \leq \min\{i-1,j-1,n-1-i,n-1-j\}.
\end{displaymath}
For $i \leq j$, put:
\begin{displaymath}
    b(i,j,k) := (s_is_{i+1} \ldots s_{j+k})(s_{i-1}s_i \ldots s_{j+k-1}) \cdots
    (s_{i-k}s_{i-k+1} \ldots s_j),
\end{displaymath}
and if $j<i$, set $b(i,j,k) := b(j,i,k)^{-1}$.

We can draw $w\in S_n$ as a picture on a two-dimensional grid, in the following way: View $w\in S_n$ as a permutation on $\{1,\cdots, n\}$ acting on the left, and put
\begin{tikzpicture}[scale=0.6]
\draw[fill=white] (0,0) circle (3.3pt);
\end{tikzpicture} 
on positions $(i,w(i))$, $i=1,\ldots, n$, in matrix coordinates. This way, we can visualize bigrassmannian permutations as on Figure \ref{fig_bigrass}.

\begin{figure}[ht]
\centering
\begin{tikzpicture}[scale=0.6, yscale=-1]
\draw (0.5,-0.25)--(0.5,16.2);
\draw (-0.25,0.5)--(16.2,0.5);
\node[anchor=east] at (0.5,1) {\scriptsize{$1$}};
\node[] at (1,0) {\scriptsize{$1$}};
\node[anchor=east]at (0.5,2) {\scriptsize{$2$}};
\node[] at (2,0) {\scriptsize{$2$}};
\node[anchor=east] at (0.5,3) {$\vdots$};
\node[] at (3,0) {$\ldots$};
\node[anchor=east] at (0.5,4) {\scriptsize{$r$}};
\node[] at (4,0) {\scriptsize{$r$}};
\node[anchor=east] at (0.5,5) {\scriptsize{$r+1$}};
\node[] at (5.2,0) {\scriptsize{$r+1$}};
\node[anchor=east] at (0.5,6) {\scriptsize{$r+2$}};
\node[anchor=east] at (0.5,7) {$\vdots$};
\node[] at (7,0) {$\ldots$};
\node[anchor=east] at (0.5,8) {\scriptsize{$j$}};
\node[] at (8,0) {\scriptsize{$i$}};
\node[anchor=east] at (0.5,9) {\scriptsize{$j+1$}};
\node[] at (9.1,0) {\scriptsize{$i+1$}};
\node[anchor=east] at (0.5,10) {\scriptsize{$j+2$}};
\node[anchor=east] at (0.5,11) {$\vdots$};
\node[] at (10.1,0) {$\ldots$};
\node[anchor=east] at (0.5,12) {\scriptsize{$i+j-r$}};
\node[] at (11.5,0) {\scriptsize{$i+j-r$}};
\node[anchor=east] at (0.5,13) {\scriptsize{$i+j-r+1$}};
\node[anchor=east] at (0.5,14) {\scriptsize{$i+j-r+2$}};
\node[anchor=east] at (0.5,15) {$\vdots$};
\node[] at (15.2,0) {$\ldots$};
\node[anchor=east] at (0.5,16) {\scriptsize{$n$}};
\node[] at (16,0) {\scriptsize{$n$}};
\draw[fill=white] (1,1) circle (3.3pt);
\draw[fill=white] (2,2) circle (3.3pt);
\node[] at (3,3) {$\ddots$};
\draw[fill=white] (4,4) circle (3.3pt);
\draw[fill=white] (9,5) circle (3.3pt);
\draw[fill=white] (10,6) circle (3.3pt);
\node[] at (11,7) {$\ddots$};
\draw[fill=white] (12,8) circle (3.3pt);
\draw[fill=white] (5,9) circle (3.3pt);
\draw[fill=white] (6,10) circle (3.3pt);
\node[] at (7,11) {$\ddots$};
\draw[fill=white] (8,12) circle (3.3pt);
\draw[fill=white] (13,13) circle (3.3pt);
\draw[fill=white] (14,14) circle (3.3pt);
\node[] at (15,15) {$\ddots$};
\draw[fill=white] (16,16) circle (3.3pt);
\draw[dashed] (4.5,-0.25)--(4.5,16.25);
\draw[dashed] (-0.25,4.5)--(16.25,4.5);
\draw[dashed] (8.5,-0.25)--(8.5,16.25);
\draw[dashed] (-0.25,8.5)--(16.25,8.5);
\draw[dashed] (12.5,-0.25)--(12.5,16.25);
\draw[dashed] (-0.25,12.5)--(16.25,12.5);
\end{tikzpicture}
\caption{The bigrassmannian $b(i,j,k)$. Here $r=  \min\{i-1,j-1\}-k$.}
\label{fig_bigrass}
\end{figure}
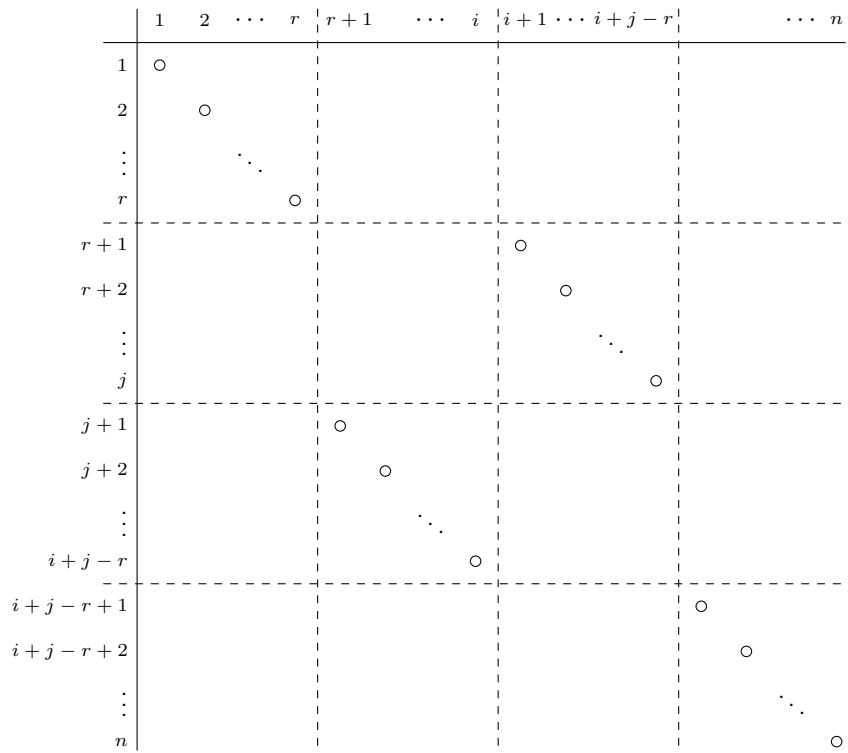

\begin{proposition}
\label{prop_explicit}
The bijection from Theorem~\ref{thm1}(\ref{thm1.2}) is given by
\[ b(i,j,k) \mapsto L_{w_{i,j}} \left\langle -\frac{(n-1)(n-2)}{2}- |i-j|-2k  \right\rangle \]
\end{proposition}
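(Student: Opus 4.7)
My plan is to combine the explicit reduced expression of $b(i,j,k)$ with Theorem~\ref{thm1} and Proposition~\ref{prop7} to narrow the socle shift to an arithmetic progression, and then pin down the correct element via a monotonicity argument.

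First, one reads directly from the defining product
\[b(i,j,k) = (s_is_{i+1}\cdots s_{j+k})(s_{i-1}\cdots s_{j+k-1})\cdots (s_{i-k}\cdots s_j) \qquad (i\leq j),\]
(inverting for $i>j$) that this word is reduced of length $(k+1)(|i-j|+k+1)$, starts with $s_i$, and ends with $s_j$; so $b(i,j,k)\in \mathbf{B}_n^{(i,j)}$ and the length is strictly increasing in $k$. Since Corollary~\ref{cor8} gives $|\mathbf{B}_n^{(i,j)}|=d(w_{i,j})+1$ and Proposition~\ref{prop9} says $\mathbf{B}_n^{(i,j)}$ is a chain of that cardinality, the $b(i,j,k)$ for $k=0,\dots,d(w_{i,j})$ enumerate $\mathbf{B}_n^{(i,j)}$ in Bruhat order. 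By Theorem~\ref{thm1}\eqref{thm1.2}, $\operatorname{soc}(\Delta_e/\Delta_{b(i,j,k)})\cong L_{w_{i,j}}\langle -a_k\rangle$ for some shifts $a_k$; Proposition~\ref{prop7} places them in $A=\{\mathbf{a}(\mathcal J)+|i-j|+2m:0\leq m\leq d(w_{i,j})\}$; and the telescoping argument of Subsection~\ref{s2.4} makes the $a_k$ pairwise distinct. So $\{a_k\}$ and $A$ are in bijection, and the task reduces to identifying this bijection.

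To identify it I would argue that $a_k<a_{k+1}$. Applying $\operatorname{Hom}(L_{w_{i,j}}\langle-\cdot\rangle,-)$ to the short exact sequence
\[0\to \Delta_{b(i,j,k)}/\Delta_{b(i,j,k+1)}\to \Delta_e/\Delta_{b(i,j,k+1)}\to \Delta_e/\Delta_{b(i,j,k)}\to 0,\]
the simple socle $L_{w_{i,j}}\langle -a_{k+1}\rangle$ of the middle term cannot survive the right-hand projection (otherwise its image would coincide with the socle of the target, forcing $a_{k+1}=a_k$); so it embeds in the kernel and is therefore a composition factor of $\Delta_{b(i,j,k)}$, while $L_{w_{i,j}}\langle -a_k\rangle$ is \emph{not} a composition factor of $\Delta_{b(i,j,k)}$ (it persists as the quotient's socle). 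Granted the structural claim that the $L_{w_{i,j}}$-composition factors of $\Delta_{b(i,j,k)}$ (viewed inside $\Delta_e$) occupy exactly the deepest $d(w_{i,j})-k$ elements of $A$, the shift $a_{k+1}$ lies in this deep end while $a_k$ lies strictly above it, so $a_{k+1}>a_k$. Combining distinctness and monotonicity then forces $a_k=\mathbf{a}(\mathcal J)+|i-j|+2k$, as claimed.

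The principal obstacle is the structural claim — namely that $p_{b(i,j,k),w_{i,j}}(v)=v^{\alpha_{k+1}}+\cdots+v^{\alpha_{d(w_{i,j})}}$ in an explicit arithmetic progression — which I would prove by induction on $k$ using the KL recursion of Lemma~\ref{prop2p} (upgraded from $e$ to $b(i,j,k)$), or by a graded twisting-functor argument of the kind used in the proof of Proposition~\ref{prop3} to migrate the computation between successive $k$ levels. Once this is in place, everything else is bookkeeping, and the small example $n=4$, $(i,j)=(2,2)$, where $p_{s_2,w_{2,2}}=v^4$ correctly places the lone $L_{w_{2,2}}\langle -5\rangle$ inside $\Delta_{s_2}$ and leaves $L_{w_{2,2}}\langle -3\rangle$ as the socle of $\Delta_e/\Delta_{s_2}$, already illustrates the mechanism.
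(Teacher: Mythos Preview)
Your setup through the distinctness of the $a_k$ is fine, and your short-exact-sequence argument correctly shows that $L_{w_{i,j}}\langle -a_{k+1}\rangle$ lies in $\Delta_{b(i,j,k)}/\Delta_{b(i,j,k+1)}$. But this is not new information: by Theorem~\ref{thm1}\eqref{thm1.3} together with multiplicity-freeness in $\Delta_e$, the $L_{w_{i,j}}$-factors of $\Delta_{b(i,j,k)}$ (viewed inside $\Delta_e$) are already known to be exactly $\{L_{w_{i,j}}\langle -a_l\rangle : l>k\}$. Your ``structural claim'' then asserts that this set coincides with the \emph{largest} $d-k$ shifts in $A$; but that is precisely the monotonicity $a_0<a_1<\cdots<a_d$, which is the proposition itself. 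So you have restated the statement as a KL identity for $p_{b(i,j,k),w_{i,j}}$ and deferred it. The recursion or twisting computation you sketch might succeed, but it is not carried out and would amount to an independent, substantially harder calculation than anything else in the paper.

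The paper's proof avoids all of this. It simply notes that the formula defines a bijection by Proposition~\ref{prop7}, that $b(i,j,k)<b(i,j,k')$ in Bruhat order whenever $k<k'$, and that --- because the relevant graded simples occur multiplicity-freely in $\Delta_e$ and Verma inclusions are opposite to Bruhat order --- the bijection of Theorem~\ref{thm1}\eqref{thm1.2} is already pinned down by this chain structure and must therefore agree with the stated formula. No KL polynomial beyond $p_{e,w_{i,j}}$ is ever computed.
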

\begin{proof}
The assignment in the claim does define a bijection by Proposition~\ref{prop7}, so we need to check that it agrees with the bijection in Theorem~\ref{thm1}.
From the definition, we see that $b(i,j,k)<b(i,j,k')$ (in the Bruhat order), for $k<k'$. This property, since the relevant subquotients appear multiplicity-freely, determines the bijection.
Since the inclusion of the Verma modules $\Delta_x$ agree with (the opposite of) the Bruhat order, the bijection in Theorem~\ref{thm1} also has this property and thus agrees with the one given in our claim.
\end{proof}

\subsection{Essential set and rank of a permutation}\label{s3.3} 

For $w \in S_n$, the so-called {\em essential set} attached to $w$ (defined in \cite{Fu}) is given by
\[ \Ess(w) := \{ (i,j) \colon 1 \leq i,j \leq n-1, \ i<w^{-1}(j), \ j<w(i), \ w(i+1) \leq j, \ w^{-1}(j+1) \leq i  \}. \]

In Figure \ref{fig_ess5} we give examples of essential sets for $w \in S_5$. To easily find $\Ess(w)$, following \cite{Ko1}, we put 
\begin{tikzpicture}[scale=0.7]
\draw[fill=white] (0,0) circle (3pt);
\end{tikzpicture} 
on positions $(i,w(i))$, $i=1,\ldots, n$, in matrix coordinates, 
and kill all cells to the right or below of these. The surviving cells are denoted by
\begin{tikzpicture}[scale=0.7]
\draw (0,0) node[cross] {};
\end{tikzpicture},
and are sometimes called the {\em diagram} of $w$. The south-east corners of the diagram (denoted by 
\begin{tikzpicture}[scale=0.7]
\draw (0,0) node[cross] {};
\node at (0,0) [rectangle,draw] {};
\end{tikzpicture}) 
constitute the essential set attached to $w$.

\begin{figure}[ht]
\centering
\begin{tikzpicture}[scale=0.7, yscale=-1]

\draw (0.5,-0.25)--(0.5,5.25);
\draw (-0.25,0.5)--(5.25,0.5);
\node[] at (0,1) {\small{$1$}};
\node[] at (1,0) {\small{$1$}};
\node[] at (0,2) {\small{$2$}};
\node[] at (2,0) {\small{$2$}};
\node[] at (0,3) {\small{$3$}};
\node[] at (3,0) {\small{$3$}};
\node[] at (0,4) {\small{$4$}};
\node[] at (4,0) {\small{$4$}};
\node[] at (0,5) {\small{$5$}};
\node[] at (5,0) {\small{$5$}};

\draw (1,1) node[cross] {};
\draw (2,1) node[cross] {};
\draw (3,1) node[cross] {};
\draw (4,1) node[cross] {};
\node at (4,1) [rectangle,draw] {};
\draw (5,1)--(5,5);
\draw (5,1)--(5,1);
\draw[fill=white] (5,1) circle (3pt);
\draw (1,2) node[cross] {};
\draw (2,2)--(2,5);
\draw (2,2)--(5,2);
\draw[fill=white] (2,2) circle (3pt);
\draw (1,3) node[cross] {};
\node at (1,3) [rectangle,draw] {};
\draw (3,3) node[cross] {};
\node at (3,3) [rectangle,draw] {};
\draw (4,3)--(4,5);
\draw (4,3)--(5,3);
\draw[fill=white] (4,3) circle (3pt);
\draw (1,4)--(1,5);
\draw (1,4)--(5,4);
\draw[fill=white] (1,4) circle (3pt);
\draw (3,5)--(3,5);
\draw (3,5)--(5,5);
\draw[fill=white] (3,5) circle (3pt);
\end{tikzpicture}
\hskip 1.5cm
\begin{tikzpicture}[scale=0.7, yscale=-1]

\draw (0.5,-0.25)--(0.5,5.25);
\draw (-0.25,0.5)--(5.25,0.5);
\node[] at (0,1) {\small{$1$}};
\node[] at (1,0) {\small{$1$}};
\node[] at (0,2) {\small{$2$}};
\node[] at (2,0) {\small{$2$}};
\node[] at (0,3) {\small{$3$}};
\node[] at (3,0) {\small{$3$}};
\node[] at (0,4) {\small{$4$}};
\node[] at (4,0) {\small{$4$}};
\node[] at (0,5) {\small{$5$}};
\node[] at (5,0) {\small{$5$}};

\draw (1,1) node[cross] {};
\draw (2,1) node[cross] {};
\draw (3,1) node[cross] {};
\draw (4,1)--(4,5);
\draw (4,1)--(5,1);
\draw[fill=white] (4,1) circle (3pt);
\draw (1,2) node[cross] {};
\draw (2,2) node[cross] {};
\draw (3,2) node[cross] {};
\node at (3,2) [rectangle,draw] {};
\draw (5,2)--(5,5);
\draw (5,2)--(5,2);
\draw[fill=white] (5,2) circle (3pt);
\draw (1,3) node[cross] {};
\draw (2,3)--(2,5);
\draw (2,3)--(5,3);
\draw[fill=white] (2,3) circle (3pt);
\draw (1,4) node[cross] {};
\node at (1,4) [rectangle,draw] {};
\draw (3,4)--(3,5);
\draw (3,4)--(5,4);
\draw[fill=white] (3,4) circle (3pt);
\draw (1,5)--(1,5);
\draw (1,5)--(5,5);
\draw[fill=white] (1,5) circle (3pt);
\end{tikzpicture}
\caption{Essential sets attached to $s_3s_4s_1s_2s_3s_2s_1$ and $s_1s_2s_3s_4s_2s_3s_1s_2$ (for $n=5$), respectively.}
\label{fig_ess5}
\end{figure}
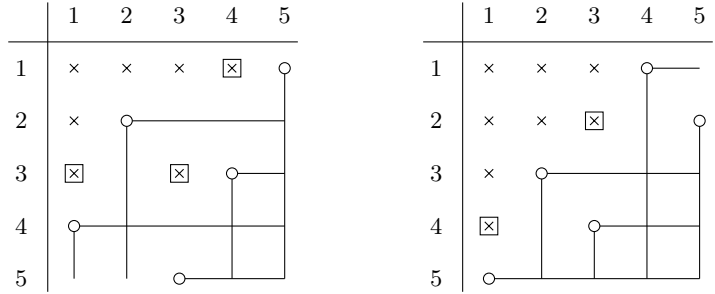

In \cite{Ko1} it is shown that there is a bijection between
$\mathbf{BM}(w)$ and $\Ess(w)$. More precisely, an element $(i,j) \in \Ess(w)$ corresponds to a certain monotone triangle (see the cited article for definition), denoted by $J_{i,k,j+1}$, for some $k$, which is then identified with a bigrassmannian permutation. From the description of monotone triangles in \cite[Section 8]{Re}, it follows that $J_{i,k,j+1}$ correspond to a bigrassmannian permutation with left descent $j$ and right descent $i$. From Proposition \ref{prop9} we know that such a bigrassmannian element 
in $\mathbf{BM}(w)$ is uniquely determined. We conclude:

\begin{corollary}[A reformulation of {\cite[Theorem]{Ko1}}]
\label{cor_Ess}
For $w \in S_n$, the map \[x \mapsto (\text{right descent of }x, \text{left descent of }x)\] is a bijection $\mathbf{BM}(w) \to \Ess(w)$.
\end{corollary}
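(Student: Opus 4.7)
The plan is to unpack the bijection established in \cite{Ko1} and read off the descent data from the intermediate combinatorial objects used there. First, I would invoke the main theorem of \cite{Ko1}, which supplies a bijection between $\mathbf{BM}(w)$ and $\Ess(w)$; this existence part is already done for us. What remains is to check that this bijection is implemented by the descent map in the statement.

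Next, I would trace through the construction. The bijection in \cite{Ko1} factors through monotone triangles: $(i,j)\in \Ess(w)$ is matched with a monotone triangle $J_{i,k,j+1}$ for a particular $k$, which is then identified with a bigrassmannian element $x\in\mathbf{B}_n$. Using the explicit description of monotone triangles in \cite[Section~8]{Re}, one reads directly off the parameters that the corresponding bigrassmannian $x$ lies in $\mathbf{B}_n^{(i,j)}$, i.e.\ has left descent $j$ and right descent $i$. This reading is the step where one needs to be a bit careful with indexing conventions (left versus right descent, and the shift $j \leftrightarrow j+1$), but it is purely a bookkeeping exercise within the cited combinatorial framework, not a genuine obstacle.

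Finally, I would invoke Proposition~\ref{prop9} to guarantee uniqueness. That proposition says that the Bruhat order restricts to a chain on each $\mathbf{B}_n^{(i,j)}$, so that $\mathbf{B}_n^{(i,j)}\cap \mathbf{B}(w)$, being a nonempty finite chain, has a unique Bruhat maximum. Consequently $\mathbf{BM}(w)$ contains at most one element with any prescribed descent pair $(i,j)$, which means the descent map $x\mapsto(\text{right descent}, \text{left descent})$ is injective on $\mathbf{BM}(w)$. Combined with the bijection from \cite{Ko1} and the identification of the image carried out in the previous paragraph, this forces the descent map to coincide with Ko's bijection and to land in $\Ess(w)$ bijectively.

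The main (and essentially the only) subtle point is the descent identification from $J_{i,k,j+1}$: one must verify that the monotone-triangle parameters $(i,j+1)$ translate to descent data $(j,i)$ in the Coxeter sense. Everything else is a direct consequence of \cite{Ko1}, \cite[Section~8]{Re}, and Proposition~\ref{prop9}.
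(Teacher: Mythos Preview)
Your proposal is correct and follows essentially the same route as the paper: invoke the bijection from \cite{Ko1} via monotone triangles $J_{i,k,j+1}$, read off the descent data using \cite[Section~8]{Re}, and then use Proposition~\ref{prop9} to pin down uniqueness within each descent class. One small notational slip: when you write ``$x$ lies in $\mathbf{B}_n^{(i,j)}$, i.e.\ has left descent $j$ and right descent $i$,'' the index should be $\mathbf{B}_n^{(j,i)}$ to match the paper's convention, but the verbal description is correct and the argument is unaffected.
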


The following result allows us to determine the socle of 
$\Delta_e/\Delta_w$ via the essential set of $w$ which is very easy and efficient to compute.

\begin{corollary}
\label{cor_ess}
The (ungraded) socle of $\Delta_e/\Delta_w$ is given by 
$\displaystyle\bigoplus_{(i,j) \in \Ess(w)} L_{w_{j,i}}$. 
\end{corollary}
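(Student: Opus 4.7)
The plan is to assemble this corollary by simply combining two results established earlier in the paper, with care to track the bookkeeping of left versus right descents. The representation-theoretic input is Theorem~\ref{thm1}(iv), which identifies the socle of $\Delta_e/\Delta_w$ as $\bigoplus_{y\in\mathbf{BM}(w)} L_{\Phi(y)}$, where $\Phi$ sends a bigrassmannian $y$ with left descent $a$ and right descent $b$ to $w_{a,b}\in\mathcal J$. The combinatorial input is Corollary~\ref{cor_Ess}, which provides the bijection $\mathbf{BM}(w)\xrightarrow{\sim}\Ess(w)$ sending $y\mapsto(\text{right descent of }y,\text{left descent of }y)$.

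Concretely, I would argue as follows. Given $(i,j)\in\Ess(w)$, Corollary~\ref{cor_Ess} produces a unique $y\in\mathbf{BM}(w)$ whose right descent equals $i$ and whose left descent equals $j$. By the definition of $\Phi$, this $y$ satisfies $\Phi(y)=w_{j,i}$ (note the swap: the first index of $\Phi(y)=w_{a,b}$ records the left descent $a$ of $y$). Running $(i,j)$ over $\Ess(w)$ gives a bijection between $\Ess(w)$ and the index set $\{\Phi(y):y\in\mathbf{BM}(w)\}$ of summands of the socle from Theorem~\ref{thm1}(iv), and the socle decomposition becomes
\[
\operatorname{soc}(\Delta_e/\Delta_w)\;=\;\bigoplus_{y\in\mathbf{BM}(w)}L_{\Phi(y)}\;=\;\bigoplus_{(i,j)\in\Ess(w)}L_{w_{j,i}},
\]
which is the claim.

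There is no real obstacle; the only thing to be cautious about is the transposition of indices between the parameter $(i,j)\in\Ess(w)$ (following the convention of Fulton/Kobayashi) and the element $w_{j,i}\in\mathcal J$ (whose two indices record a left and a right \emph{ascent}). Once this matching is verified against the conventions fixed in Subsection~\ref{s1.1} and in Corollary~\ref{cor_Ess}, the statement is immediate.
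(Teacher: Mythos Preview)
Your proposal is correct and essentially matches the paper's argument. The paper's one-line proof cites Proposition~\ref{prop_explicit} (which makes explicit that the bijection of Theorem~\ref{thm1}\eqref{thm1.2} sends a bigrassmannian with left descent $a$ and right descent $b$ to $L_{w_{a,b}}$, i.e., your $\Phi$) together with Corollary~\ref{cor_Ess}; this is exactly the combination you invoke, with the same bookkeeping of the index swap $(i,j)\leftrightarrow w_{j,i}$.
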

\begin{proof}
The claim follows from Proposition \ref{prop_explicit} and Corollary \ref{cor_Ess}.
\end{proof}

To illustrate how this corollary works, one can compare 
Figure~\ref{fig_ess5} with Figure~\ref{fig_socles5} using Figure~\ref{fig_J5}.
Observe that the essential set alone does not provide 
information on the degrees of the composition factors 
in the socle of $\Delta_e/\Delta_w$. To get these degrees,
we need another combinatorial tool called the {\em rank function}.

The rest of the subsection provides an upgrade of the description in 
Corollary~\ref{cor_ess} to the graded setup.
For $w \in S_n$, the so-called \emph{rank function} $r_w$ (defined in \cite{Fu}) is given by:
\[ r_w(i,j) := |\{k \leq i \colon w(k) \leq j  \}|, \qquad 1 \leq i,j \leq n.  \]

More useful for us is the function $t_w$, which we call the {\em co-rank function}, given by
\[t_w(i,j) := \min\{i,j\}-r_w(i,j). \]
If we again consider a permutation as a picture on a two-dimensional grid as before, then $r_w(i,j)$ is equal to the number of \begin{tikzpicture}[scale=0.7] \draw[fill=white] (0,0) circle (3pt); \end{tikzpicture}  in the north-west area in Figure \ref{fig_cork_gen}.
If $i \leq j$, then $t_w(i,j)$ is equal to the number of \begin{tikzpicture}[scale=0.7] \draw[fill=white] (0,0) circle (3pt); \end{tikzpicture}  in the north-east area in Figure \ref{fig_cork_gen}, and otherwise to the number of \begin{tikzpicture}[scale=0.7] \draw[fill=white] (0,0) circle (3pt); \end{tikzpicture}  in the south-west area.

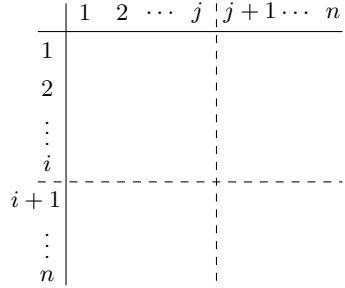
\begin{figure}[ht]
\centering
\begin{tikzpicture}[scale=0.5, yscale=-1]
\draw (0.5,-0.25)--(0.5,7.25);
\draw (-0.25,0.5)--(7.85,0.5);
\node[] at (0,1) {\small{$1$}};
\node[] at (1,0) {\small{$1$}};
\node[] at (0,2) {\small{$2$}};
\node[] at (2,0) {\small{$2$}};
\node[] at (0,3) {\small{$\vdots$}};
\node[] at (3,0) {\small{$\ldots$}};
\node[] at (0,4) {\small{$i$}};
\node[] at (4,0) {\small{$j$}};
\node[] at (-.3,5) {\small{$i+1$}};
\node[] at (5.4,0) {\small{$j+1$}};
\node[] at (0,6) {\small{$\vdots$}};
\node[] at (6.6,0) {\small{$\ldots$}};
\node[] at (0,7) {\small{$n$}};
\node[] at (7.6,0) {\small{$n$}};

\draw[dashed] (4.5,-0.25)--(4.5,7.25);
\draw[dashed] (-0.25,4.5)--(7.85,4.5);
\end{tikzpicture}
\caption{Areas which determine $r_w(i,j)$ and $t_w(i,j)$.}
\label{fig_cork_gen}
\end{figure}

The co-rank functions for the examples in Figure \ref{fig_ess5} are given in Figure \ref{fig_corank5}. 

\begin{figure}[ht]
\centering
\begin{tabular}{c|ccccc}
 & \small{$1$} & \small{$2$} & \small{$3$} & \small{$4$} & \small{$5$}\\ \hline
\small{$1$} & 
\small{$1$} & \small{$1$} & \small{$1$} & \fbox{\small{$1$}} & \small{$0$}\\
\small{$2$} & 
\small{$1$} & \small{$1$} & \small{$1$} & \small{$1$} & \small{$0$}\\
\small{$3$} & 
\fbox{\small{$1$}} & \small{$1$} & \fbox{\small{$2$}} & \small{$1$} & \small{$0$}\\
\small{$4$} & 
\small{$0$} & \small{$0$} & \small{$1$} & \small{$1$} & \small{$0$}\\
\small{$5$} & 
\small{$0$} & \small{$0$} & \small{$0$} & \small{$0$} & \small{$0$}\\
\end{tabular}
\hskip 1.5cm
\begin{tabular}{c|ccccc}
 & \small{$1$} & \small{$2$} & \small{$3$} & \small{$4$} & \small{$5$}\\ \hline
\small{$1$} & 
\small{$1$} & \small{$1$} & \small{$1$} & \small{$0$} & \small{$0$}\\
\small{$2$} & 
\small{$1$} & \small{$2$} & \fbox{\small{$2$}} & \small{$1$} & \small{$0$}\\
\small{$3$} & 
\small{$1$} & \small{$1$} & \small{$2$} & \small{$1$} & \small{$0$}\\
\small{$4$} & 
\fbox{\small{$1$}} & \small{$1$} & \small{$1$} & \small{$1$} & \small{$0$}\\
\small{$5$} & 
\small{$0$} & \small{$0$} & \small{$0$} & \small{$0$} & \small{$0$}\\
\end{tabular}
\caption{The values of $t_w(i,j)$ for  $w=s_3s_4s_1s_2s_3s_2s_1$ and $w=s_1s_2s_3s_4s_2s_3s_1s_2$ (for $n=5$), respectively. The values at $\Ess(w)$ are boxed.}
\label{fig_corank5}
\end{figure}

\begin{lemma}\label{t=bruhat}
For $w,x\in S_n$, we have $t_w\leq t_x$ if and only if $w\leq x$.
\end{lemma}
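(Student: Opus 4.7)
The plan is to deduce this from the classical Ehresmann/tableau criterion for the Bruhat order on $S_n$, which states that for $w, x \in S_n$,
\[
w \leq x \quad\Longleftrightarrow\quad r_w(i,j) \geq r_x(i,j) \text{ for all } 1 \leq i, j \leq n.
\]
This characterization (appearing, for instance, in Björner–Brenti's \emph{Combinatorics of Coxeter Groups}, Theorem~2.1.5) is the standard translation of the Bruhat order into rank conditions on the permutation matrices. The formulation we want is simply the co-rank version.

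First I would recall the definition $t_w(i,j) = \min\{i,j\} - r_w(i,j)$. Since $\min\{i,j\}$ depends only on $(i,j)$ and not on the permutation, we have, for each fixed pair $(i,j)$,
\[
t_w(i,j) \leq t_x(i,j) \quad\Longleftrightarrow\quad r_w(i,j) \geq r_x(i,j).
\]
Quantifying over all $(i,j)$ gives $t_w \leq t_x$ (pointwise) if and only if $r_w \geq r_x$ (pointwise), and then the Ehresmann criterion immediately yields the desired equivalence with $w \leq x$.

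The main (essentially only) subtlety is verifying that no information is lost by passing from $r_w$ to $t_w$; this is transparent from the definition, since the shift $\min\{i,j\}$ is a function of the grid position alone. No step here presents a genuine obstacle: the content of the lemma is entirely encapsulated in the classical rank-function criterion, and the proof amounts to citing it and performing the order-reversing substitution.
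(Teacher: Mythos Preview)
Your proposal is correct and essentially identical to the paper's own proof, which simply cites \cite[Theorem~2.1.5]{BB2}. You have just made explicit the trivial translation from the rank function $r_w$ to the co-rank function $t_w$, which the paper leaves implicit.
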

\begin{proof}
See \cite[Theorem 2.1.5]{BB2}.
\end{proof}

The following lemma is clear from Figure \ref{fig_bigrass}:

\begin{lemma}\label{tbig}
For each $b(i,j,k)\in \mathbf{B}_n$, we have $\Ess(b(i,j,k))=\{(j,i)\}$ and $t_{b(i,j,k)}(j,i)=k+1$.
\end{lemma}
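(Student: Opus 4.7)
The plan is to read off the one-line notation of $b(i,j,k)$ from Figure~\ref{fig_bigrass} and to verify both assertions by direct inspection. I would first reduce to the case $i\leq j$: since $b(i,j,k)=b(j,i,k)^{-1}$ for $j<i$ by definition, and the easy identities $\Ess(w^{-1})=\{(b,a)\colon (a,b)\in\Ess(w)\}$ and $t_{w^{-1}}(a,b)=t_w(b,a)$ hold, the case $j<i$ follows from the case $i<j$ applied to $b(j,i,k)$.

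Assume $i\leq j$ and set $r:=i-1-k$. Reading the circles of Figure~\ref{fig_bigrass} row by row yields the one-line notation
\[
b(i,j,k)=(\,1,2,\ldots,r,\ i{+}1,\ldots,j{+}k{+}1,\ i{-}k,\ldots,i,\ j{+}k{+}2,\ldots,n\,).
\]
From this expression one sees that $w:=b(i,j,k)$ has a unique right descent at position $j$ (where the value drops from $j{+}k{+}1$ to $i{-}k$) and a unique left descent at value $i$ (since $w^{-1}(i)=j{+}k{+}1>i{-}k=w^{-1}(i{+}1)$).

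For the first assertion, note that any $(a,b)\in\Ess(w)$ satisfies $w(a)>b\geq w(a{+}1)$ and $w^{-1}(b)>a\geq w^{-1}(b{+}1)$, forcing $a$ to be a right descent and $b$ to be a left descent of $w$. Combined with the uniqueness above, this gives $\Ess(w)\subseteq\{(j,i)\}$. Conversely, the four defining inequalities at $(j,i)$,
\[ w(j)=j{+}k{+}1>i,\quad w(j{+}1)=i{-}k\leq i,\quad w^{-1}(i)=j{+}k{+}1>j,\quad w^{-1}(i{+}1)=i{-}k\leq j, \]
follow directly from the one-line notation together with $k\geq 0$ and $i\leq j$, so $(j,i)\in\Ess(w)$.

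For the co-rank, $r_w(j,i)=|\{p\leq j\colon w(p)\leq i\}|$ picks up only the $r$ initial fixed points of $w$, since on positions $r{+}1,\ldots,j$ the values are $i{+}1,\ldots,j{+}k{+}1$, all strictly exceeding $i$. Hence $t_w(j,i)=\min(i,j)-r_w(j,i)=i-(i-1-k)=k+1$. The only points needing mild care are boundary cases such as $r=0$, $i=j$, or $j{+}k{+}1=n$, where some of the displayed runs in the one-line notation degenerate; none of these edge cases affects the four inequalities or the rank count. I expect this boundary bookkeeping to be the main (and quite minor) obstacle.
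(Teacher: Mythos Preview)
Your proposal is correct and follows essentially the same approach as the paper, which simply declares the lemma ``clear from Figure~\ref{fig_bigrass}'': you make this explicit by reading off the one-line notation of $b(i,j,k)$ from the figure and checking the defining conditions for $\Ess$ and $t_w$ directly. Your reduction to $i\leq j$ via the identities $\Ess(w^{-1})=\{(b,a):(a,b)\in\Ess(w)\}$ and $t_{w^{-1}}(a,b)=t_w(b,a)$ is sound, and your descent, essential-set, and rank computations are all accurate.
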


Finally, we can describe the graded shifts of the socle constituents in
$\Delta_e/(\Delta_w\langle- \ell(w)\rangle)$.

\begin{proposition}\label{grsoc}
Let $w\in S_n$. The socle of $N_w:=\Delta_e/(\Delta_w\langle- \ell(w)\rangle)$ in $^{\mathbb{Z}} \cO$ is given by 
\[
\bigoplus_{(i,j)\in\Ess(w)} L_{w_{j,i}}\left\langle-\frac{(n-1)(n-2)}{2}-|i-j|-2(t_w(i,j)-1) \right\rangle.\]
\end{proposition}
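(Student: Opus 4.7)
The plan is to upgrade the ungraded socle decomposition of Corollary~\ref{cor_ess} to a graded statement by identifying, for each $(i,j)\in\Ess(w)$, the precise bigrassmannian that corresponds to the socle constituent and reading off its graded shift from Proposition~\ref{prop_explicit}.

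First, by Corollary~\ref{cor_ess} the ungraded socle of $\Delta_e/\Delta_w$, and hence of $N_w$, is $\bigoplus_{(i,j)\in\Ess(w)} L_{w_{j,i}}$. Combining Theorem~\ref{thm1}\eqref{thm1.4} with Corollary~\ref{cor_Ess}, the summand at $(i,j)$ corresponds, via the bijection of Theorem~\ref{thm1}\eqref{thm1.2}, to the unique Bruhat-maximal element of $\mathbf{B}(w)\cap\mathbf{B}_n^{(j,i)}$, which I will call $b(j,i,k^*)$. Proposition~\ref{prop_explicit} then translates this bigrassmannian into the graded subquotient $L_{w_{j,i}}\langle-\tfrac{(n-1)(n-2)}{2}-|i-j|-2k^*\rangle$ of $\Delta_e$. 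Since the shift $\langle-\ell(w)\rangle$ in the definition of $N_w$ is precisely the one making the embedding $\Delta_w\langle-\ell(w)\rangle\hookrightarrow\Delta_e$ graded, the degree carries over unchanged to $N_w$, and the entire proposition reduces to proving $k^*=t_w(i,j)-1$.

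The upper bound $k^*+1\leq t_w(i,j)$ is immediate: $b(j,i,k^*)\leq w$ together with Lemma~\ref{t=bruhat} gives $t_{b(j,i,k^*)}(i,j)\leq t_w(i,j)$, and Lemma~\ref{tbig} evaluates the left hand side to $k^*+1$. For the reverse inequality I would invoke Fulton's criterion from~\cite{Fu}: a permutation $v\in S_n$ satisfies $v\leq w$ if and only if $t_v(p,q)\leq t_w(p,q)$ for every $(p,q)\in\Ess(v)$. Since $\Ess(b(j,i,k))=\{(i,j)\}$ for every admissible $k$ by Lemma~\ref{tbig}, applying this criterion to $v=b(j,i,t_w(i,j)-1)$ reduces to the tautological inequality $t_v(i,j)=t_w(i,j)$, hence $v\leq w$; Bruhat maximality of $b(j,i,k^*)$ in $\mathbf{B}(w)\cap\mathbf{B}_n^{(j,i)}$ then forces $k^*\geq t_w(i,j)-1$.

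The main obstacle is precisely this lower bound. Lemma~\ref{t=bruhat} alone would require controlling $t_{b(j,i,k)}$ at every position rather than only at the essential point $(i,j)$, forcing one to compute the full co-rank function of the bigrassmannian. Fulton's theorem is exactly the tool that localizes the Bruhat comparison to the essential set of $v$ and sidesteps this issue. Everything else in the argument is routine bookkeeping around the bijections from Theorem~\ref{thm1} and Corollary~\ref{cor_Ess}, together with the graded structure inherited from $\Delta_e$.
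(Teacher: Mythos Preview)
Your argument is correct and follows essentially the same strategy as the paper's: identify the socle constituent at $(i,j)\in\Ess(w)$ with the Bruhat-maximal bigrassmannian $b(j,i,k^*)\in\mathbf{B}(w)$ via Theorem~\ref{thm1} and Corollary~\ref{cor_Ess}, read off the graded shift from Proposition~\ref{prop_explicit}, and reduce everything to the identity $k^*=t_w(i,j)-1$.

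The only noteworthy difference is how this last identity is established. You get the lower bound $k^*\geq t_w(i,j)-1$ by invoking Fulton's essential-set refinement of the Bruhat criterion from~\cite{Fu}, which localizes the comparison $b(j,i,k)\leq w$ to the single point $(i,j)=\Ess(b(j,i,k))$. The paper instead stays inside its own module-theoretic framework: it first records the bigrassmannian case \eqref{socbig}, then uses the (non-)surjections $N_w\twoheadrightarrow N_{b(j,i,k^*)}$ and $N_w\not\twoheadrightarrow N_{b(j,i,k^*+1)}$ together with a parity observation to pin down the shift. Both routes are valid; yours is combinatorially more direct, while the paper's avoids importing the external criterion.
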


\begin{proof}
Note first that, for $w=b(j,i,k)\in\mathbf{B}_n$, we have
\begin{equation}\label{socbig}
\soc(N_w) =L_{w_{j,i}}\langle 
-\mathbf{a}(\mathcal{J})-|i-j|-2(t_w(i,j)-1) \rangle 
\end{equation}
by Lemma~\ref{tbig} and Proposition~\ref{prop_explicit}.
Recall that $\mathbf{a}(\mathcal{J})=\frac{(n-1)(n-2)}{2}$.

Now let $w\in S_n$ be arbitrary and let $(i,j)\in \Ess(w)$. 
By Corollary \ref{cor_ess} we only need to determine the degree shift, say $m$, of $L_{w_{j,i}}$ in the socle of $N_w$.
Let $b(j,i,k)$ be the element in $\mathbf{BM}(w)$ which corresponds to $(i,j)$ under the bijection in Corollary \ref{cor_Ess}. 
Since $b(j,i,k)\in\mathbf{BM}(w)$, by Lemma \ref{t=bruhat} we have $b(j,i,k)\leq w$ while $ b(j,i,k+1)\not\leq w$, and thus $N_w\not\surj N_{b(j,i,k+1)}$ while $N_w \surj N_{b(j,i,k)}$. So $m = \mathbf{a}(\mathcal{J})+|i-j|+2(t_w(i,j)-1)$ follows from \eqref{socbig} and 
the parity of $m$.
\end{proof}

\section{Further remarks}\label{srem}

\subsection{Inclusions between arbitrary Verma modules}\label{s7.1}

An immediate consequence of Theorem~\ref{thm1} is:

\begin{corollary}\label{cor21}
Let $v,w\in S_n$ be such that $v<w$.
\begin{enumerate}[$($i$)$]
\item\label{cor21.1} The bijection from Theorem~\ref{thm1}\eqref{thm1.2} induces
a bijection between simple subquotients of 
$\Delta_v/\Delta_w$ of the form $L_x$, where $x\in\mathcal{J}$, 
and $y\in \mathbf{B}_n$ such that $y\leq w$ and $y\not\leq v$.
\item\label{cor21.2} The socle of $\Delta_v/\Delta_w$ 
consists of all $L_x$, where $x$ corresponds to an element in $\mathbf{BM}(w)\setminus \mathbf{BM}(v)$.
\end{enumerate}
\end{corollary}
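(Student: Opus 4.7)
The plan is to deduce both parts from Theorem~\ref{thm1} by working inside the short exact sequence
\begin{equation*}
0 \to \Delta_v/\Delta_w \to \Delta_e/\Delta_w \to \Delta_e/\Delta_v \to 0,
\end{equation*}
which is available because $v\le e$ gives $\Delta_v\subset\Delta_e$.

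For claim \eqref{cor21.1}, I would combine additivity in the graded Grothendieck group of $\mathcal O_0$ with Theorem~\ref{thm1}\eqref{thm1.3} applied to each of $\Delta_e/\Delta_w$ and $\Delta_e/\Delta_v$. The crucial input is Proposition~\ref{prop7}, which ensures that every graded simple $L_x\langle k\rangle$ with $x\in\mathcal{J}$ appears in $\Delta_e$ with multiplicity at most one. Therefore the multiplicity of such an $L_x\langle k\rangle$ in the middle quotient $\Delta_v/\Delta_w$ is exactly the difference of its multiplicities in $\Delta_e/\Delta_w$ and $\Delta_e/\Delta_v$. Under the bijection of Theorem~\ref{thm1}\eqref{thm1.2}, this difference is indexed by $\{y\in\mathbf{B}_n : y\le w\}\setminus\{y\in\mathbf{B}_n : y\le v\}$, which is exactly the set in the claim.

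For claim \eqref{cor21.2}, I would first apply Proposition~\ref{prop3} to restrict socle candidates of $\Delta_v/\Delta_w$ to $L_z$ with $z\in\mathcal{J}$. Since $\Delta_v/\Delta_w$ embeds as a submodule of $\Delta_e/\Delta_w$, we have $\operatorname{soc}(\Delta_v/\Delta_w) = \operatorname{soc}(\Delta_e/\Delta_w)\cap\bigl(\Delta_v/\Delta_w\bigr)$. Theorem~\ref{thm1}\eqref{thm1.4} identifies $\operatorname{soc}(\Delta_e/\Delta_w)$ with $\mathbf{BM}(w)$ under the bijection, and part \eqref{cor21.1} pinpoints which of those constituents survive in the submodule $\Delta_v/\Delta_w$: precisely those indexed by $y\in\mathbf{BM}(w)$ with $y\not\le v$.

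The only remaining step, which is essentially the single combinatorial observation in the proof, is to verify the identity $\{y\in\mathbf{BM}(w) : y\not\le v\} = \mathbf{BM}(w)\setminus\mathbf{BM}(v)$ when $v<w$. The inclusion $\mathbf{BM}(w)\cap\mathbf{BM}(v)\subseteq\mathbf{BM}(w)\cap\mathbf{B}(v)$ is immediate; conversely, if $y\in\mathbf{BM}(w)$ satisfies $y\le v$, then any strictly larger bigrassmannian $y'\le v$ would also satisfy $y'\le w$, contradicting the maximality of $y$ inside $\mathbf{B}(w)$, so $y\in\mathbf{BM}(v)$. No real obstacle is expected: the corollary is a direct bookkeeping consequence of Theorem~\ref{thm1} together with this maximality argument.
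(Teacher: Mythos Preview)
Your proposal is correct and follows the same route as the paper, which simply declares the corollary ``an immediate consequence of Theorem~\ref{thm1}'' without further argument. Your write-up just spells out the short exact sequence, the multiplicity-one input from Proposition~\ref{prop7}, and the maximality check $\{y\in\mathbf{BM}(w):y\not\le v\}=\mathbf{BM}(w)\setminus\mathbf{BM}(v)$ that the authors leave implicit.
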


A more detailed description of the socle of
$\Delta_v\langle - \ell(v)\rangle/(\Delta_w\langle - \ell(w)\rangle)$ 
as an object in  $^\mathbb Z\cO_0$ follows from Proposition~\ref{grsoc}.

\subsection{No such clean result in other types}\label{s7.3}

Unfortunately, Theorem~\ref{thm1} is not true, in general, in other types. 
One of the reasons is that 
Corollary~\ref{cor8} fails already in types $B_3$ and $D_4$.
We note that $\mathbf B_n$ agrees in type A with the set of join-irreducible elements in $W$, while, in general, there are bigrassmannian elements that are not join-irreducible. 
To generalize Theorem \ref{thm1}, we need, to start with, replace $\mathbf{B}_n$ by the set of join-irreducible elements in $W$. 
But even then, most of our crucial arguments fail outside type A.
For example, in non-simply laced types, for some pairs of simple reflections
$s$ and $t$ there will be more than one element $w\in\mathcal{J}$
such that $sw>w$ and $wt>w$. Another problem is that neither bigrassmannian elements nor join-irreducible elements
 with fixed left and right descents form a chain
with respect to the Bruhat order. 

Rank two case is, however,  special. In this case $\mathcal{J}$ is the set of
bigrassmannian elements and all KL polynomials are trivial. So,
Theorem~\ref{thm1} is true. Notably, an appropriate analogue of
the map $\Phi$ in this case is not the identity map.

\subsection*{Acknowledgments}

This research was partially supported by
the Swedish Research Council, 
G{\"o}ran Gustafsson Stiftelse and Vergstiftelsen. 
The third author was also partially supported by the 
QuantiXLie Center of Excellence grant no. KK.01.1.1.01.0004 
funded by the European Regional Development Fund.

We are especially indebted to Sascha Orlik
and Matthias Strauch whose question
started this research.

\vspace{2mm}

\noindent
H.~K.: Department of Mathematics, Uppsala University, Box. 480,
SE-75106, Uppsala,\\ SWEDEN, email: {\tt hankyung.ko\symbol{64}math.uu.se}

\noindent
V.~M.: Department of Mathematics, Uppsala University, Box. 480,
SE-75106, Uppsala,\\ SWEDEN, email: {\tt mazor\symbol{64}math.uu.se}

\noindent
R.~M.: Department of Mathematics, Uppsala University, Box. 480,
SE-75106, Uppsala,\\ SWEDEN, email: {\tt rafael.mrden\symbol{64}math.uu.se} \\
(On leave from: Faculty of Civil Engineering, University of Zagreb, \\
Fra Andrije Ka\v{c}i\'{c}a-Mio\v{s}i\'{c}a 26, 10000 Zagreb, CROATIA)


\begin{thebibliography}{9999999}
\bibitem[AS]{AS} Andersen, H.; Stroppel, C. Twisting functors on $\mathcal{O}$. 
Represent. Theory {\bf 7} (2003), 681--699. 
\bibitem[BB]{BB} Beilinson, A.; Bernstein, J. Localisation de 
$\mathfrak{g}$-modules. C. R. Acad. Sci. Paris Ser. I Math. {\bf 292} (1981), no. 1, 15--18. 
\bibitem[BGS]{BGS} Beilinson, A.; Ginzburg, V.; Soergel, W. Koszul duality patterns 
in representation theory. J. Amer. Math. Soc. {\bf 9} (1996), no. 2, 473--527.
\bibitem[BB2]{BB2} Bj\"{o}rner, A.; Brenti, F. Combinatorics of Coxeter Groups, Graduate Texts in Mathematics, vol.231, Springer Berlin Hei-delberg, 2006.
\bibitem[BG]{BG} Bernstein, I. N.; Gelfand, S. I. Tensor products of finite- 
and infinite-dimensional representations of semisimple Lie algebras. Compositio
Math. {\bf 41} (1980), no. 2, 245--285. 
\bibitem[BGG1]{BGG0} Bernstein, I. N.; Gelfand, I. M.; Gelfand, S. I. 
Differential operators on the base affine space and a study of $\mathfrak{g}$-modules. 
Lie groups and their representations (Proc. Summer School, Bolyai Janos 
Math. Soc., Budapest, 1971), pp. 21--64. Halsted, New York, 1975.
\bibitem[BGG2]{BGG} Bernstein, I. N.; Gelfand, I. M.; Gelfand, S. I. 
A certain category of $\mathfrak{g}$-modules. (Russian) Funkcional. Anal. i 
Prilozen. {\bf 10} (1976), no. 2, 1--8.
\bibitem[BK]{BK} Brylinski, J.-L.; Kashiwara, M. Kazhdan-Lusztig conjecture and 
holonomic systems. Invent. Math. {\bf 64} (1981), no. 3, 387--410. 
\bibitem[Ca]{Ca} Carlin, K. Extensions of Verma modules. Trans. Amer. 
Math. Soc. {\bf 294} (1986), no. 1, 29--43.
\bibitem[Di]{Di} Dixmier, J. Enveloping algebras. Revised reprint of the 
1977 translation. Graduate Studies in Mathematics, {\bf 11}. American Mathematical 
Society, Providence, RI, 1996. xx+379 pp.
\bibitem[EW]{EW} Elias, B.; Williamson, G. The Hodge theory of Soergel bimodules. 
Ann. of Math. (2) {\bf 180} (2014), no. 3, 1089--1136.
\bibitem[EH]{EH} Engbers, J.; Hammett, A. On comparability of bigrassmannian 
permutations. Australas. J. Combin. {\bf 71} (2018), 121--152.
\bibitem[EL]{EL} Eriksson, K.; Linusson, S. Combinatorics of Fulton's essential 
set. Duke Math. J. {\bf 85} (1996), no. 1, 61--76. 
\bibitem[Fu]{Fu} Fulton, W. Flags, Schubert polynomials, degeneracy loci, 
and determinantal formulas, Duke Math. J. {\bf 65} (1992), no. 3, 381--420.
\bibitem[Ge]{Ge} Geck, M. Kazhdan--Lusztig cells and the Murphy basis, Proc. 
Lond. Math. Soc. (3)  {\bf 93} (3) (2006), 635--665. 
\bibitem[Hu]{Hu} Humphreys, J. E. Representations of semisimple Lie algebras in 
the BGG category $\mathcal{O}$. Graduate Studies in Mathematics, {\bf 94}. 
American Mathematical Society, Providence, RI, 2008. xvi+289 pp.
\bibitem[Ir]{Ir} Irving, R. The socle filtration of a Verma module. Ann. 
Sci. {\'E}cole Norm. Sup. (4) {\bf 21} (1988), no. 1, 47--65.
\bibitem[IRRT]{IRRT} Iyama, O.; Reading, N.; Reiten, I.; Thomas, H. Lattice structure of
Weyl groups via representation theory of preprojective algebras. Compos. Math.
154 (2018), no. 6, 1269–1305.
\bibitem[KL]{KL} Kazhdan, D.; Lusztig, G. Representations of Coxeter groups 
and Hecke algebras. Invent. Math. {\bf 53} (1979), no. 2, 165--184.
\bibitem[KMMZ]{KMMZ} Kildetoft, T.; Mackaay, M.; Mazorchuk, V.; 
Zimmermann, J. Simple transitive $2$-representations of small quotients 
of Soergel bimodules. Trans. Amer. Math. Soc. {\bf 371} (2019), no. 8, 5551--5590.
\bibitem[KM]{KM} Ko, H.; Mazorchuk, V. 2-representations of small quotients 
of Soergel bimodules in infinite types. Preprint arXiv:1809.11113, to
appear in Proc. AMS.
\bibitem[Ko1]{Ko1} Kobayashi, M. Bijection between bigrassmannian permutations 
maximal below a permutation and its essential set. Electron. J. Combin. 
{\bf 17} (2010), no. 1, Note 27, 8 pp. 
\bibitem[Ko2]{Ko2} Kobayashi, M. Enumeration of bigrassmannian permutations 
below a permutation in Bruhat order. Order {\bf 28} (2011), no. 1, 131--137.
\bibitem[KSX]{KSX} K{\"o}nig, S.; Slung{\aa}rd, I.; Xi, C. Double centralizer 
properties, dominant dimension, and tilting modules. J. Algebra {\bf 240} 
(2001), no. 1, 393--412. 
\bibitem[LS]{LS} Lascoux, A.; Sch{\"u}tzenberger, M.-P. Treillis et bases 
des groupes de Coxeter, Electron. J. Combin. {\bf 3}, $\#$R27 (1996), 35pp. 
\bibitem[Lu1]{Lu}
G.~Lusztig. Some examples of square integrable representations of semisimple p-adic groups. 
Trans. Amer. Math. Soc. {\bf 277} (1983), no. 2, 623--653. 
\bibitem[Lu2]{Lu2} Lusztig, G. Cells in affine Weyl groups. II, J. Algebra 
{\bf 109} (2) (1987) 536--548.
\bibitem[Ma]{Ma} Mazorchuk, V. Some homological properties of the 
category $\mathcal{O}$. Pacific J. Math. {\bf 232} (2) (2007) 313--341. 
\bibitem[MS]{MS} Mazorchuk, V.; Stroppel, C. Translation and shuffling of 
projectively presentable modules and a categorification of a parabolic 
Hecke module. Trans. Amer. Math. Soc. {\bf 357} (2005), no. 7, 2939--2973. 
\bibitem[Re]{Re} Reading, N. Order dimension, strong Bruhat order and lattice propeties 
for posets. Order {\bf 19} (2002), no. 1, 73--100.

\bibitem[RWY]{RWY} Reiner, V.; Woo, A.; Yong, A. Presenting the cohomology of a Schubert Variety. Trans. Am. Math. Soc. {\bf 363} (2011) , no. 1, 521--543.

\bibitem[Ro]{Ro} Robinson, G. de B. On the Representations of the Symmetric Group. 
Amer. J. Math. {\bf 60} (1938), no. 3, 745--760. 
\bibitem[Sa]{Sa} Sagan, B. The symmetric group. Representations, combinatorial 
algorithms, and symmetric functions. Second edition. 
Graduate Texts in Mathematics, {\bf 203}. Springer-Verlag, New York, 2001. xvi+238 pp.
\bibitem[Sch]{Sch}  Schensted, C. Longest increasing and decreasing subsequences. 
Canadian J. Math. {\bf 13} (1961), 179--191. 
\bibitem[So1]{So} Soergel, W. {\'E}quivalences de certaines cat{\'e}gories de 
$\mathfrak{g}$-modules. C. R. Acad. Sci. Paris S{\'e}r. I Math. 
{\bf 303} (1986), no. 15, 725--728.
\bibitem[So2]{So2} Soergel, W. Kategorie $\mathcal{O}$, perverse Garben 
und Moduln {\"u}ber den Koinvarianten zur Weylgruppe. J. Amer. Math. Soc. 
{\bf 3} (1990), no. 2, 421--445. 
\bibitem[St]{St} Stroppel, C. Category $\mathcal{O}$: gradings and translation 
functors. J. Algebra {\bf 268} (2003), no. 1, 301--326. 
\bibitem[Ve]{Ve} Verma, D.-N. Structure of certain induced representations of 
complex semisimple Lie algebras. Bull. Amer. Math. Soc. {\bf 74} (1968), 160--166.
\end{thebibliography}
\end{document}